\documentclass[12pt,a4paper,reqno]{amsart}

\usepackage[a4paper,left=1.2in,right=1.2in,top=1.4in,bottom=1in]{geometry}
\usepackage{amsmath,amssymb,amsfonts,mathtools}
\usepackage{microtype}
\usepackage{hyperref}
\usepackage{enumitem}
\usepackage{upgreek}
\usepackage{bm}
\hypersetup{
  colorlinks=true,
  linkcolor=blue,
  citecolor=blue,
  urlcolor=blue
}

\newtheorem{theorem}{Theorem}[section]
\newtheorem{corollary}[theorem]{Corollary}
\newtheorem{lemma}[theorem]{Lemma}
\newtheorem*{mainquestion}{Main Question}
\newtheorem{proposition}[theorem]{Proposition}
\theoremstyle{definition}

\newtheorem{example}[theorem]{Example}
\theoremstyle{remark}
\newtheorem{remark}[theorem]{Remark}

\numberwithin{equation}{section}

\newcommand{\F}{\mathcal F^2}
\newcommand{\C}{\mathbb C}
\newcommand{\R}{\mathbb R}
\newcommand{\dd}{\,d}
\newcommand{\e}{\mathrm e}
\newcommand{\ip}[2]{\left\langle #1,#2\right\rangle}

\title[Complex Symmetric Weighted Composition Operators]
      {A Characterization of Complex Symmetric Weighted Composition Operators on the Fock Space}

\author{Yuanqi Sang}
\address{School of Mathematics, Southwestern University of Finance and Economics,
  Chengdu, 611130, China}
\email{sangyq@swufe.edu.cn}

\author{Liankuo Zhao}
\address{School of Mathematics and Computer Science, Shanxi Normal University,
  Taiyuan, 030031, China}
\email{lkzhao@sxnu.edu.cn}

\date{}
\subjclass[2020]{Primary 47B33; Secondary 30H20, 47B32}
\keywords{Fock space, weighted composition operator, complex symmetric operator}
\thanks{}

\begin{document}

\begin{abstract}
We characterize all bounded complex symmetric weighted composition operators on the Fock space \(\mathcal F_{\alpha}^{2}\),
without prescribing a conjugation a priori. Our approach uses reproducing kernels and two generating functions. 
The Taylor coefficients of these functions are eigenvectors of the operator and its adjoint, respectively. 
The conjugations arising from our construction are anti-linear Gaussian integral operators.
Some of them are weighted composition conjugations, whereas others are not.
\end{abstract}

\maketitle

\section{Introduction}

For $\alpha>0$, the weighted Fock space $\mathcal F_\alpha^2$ is defined as the space of all entire functions $f$ such that
\[
\|f\|_\alpha^2
=\frac{\alpha}{\pi}\int_{\C}|f(z)|^2e^{-\alpha |z|^2}\dd A(z)
<\infty,
\]
where $dA$ denotes the area measure on $\C$.
Its inner product is
\[
  \ip{f}{g}_\alpha
  =\frac{\alpha}{\pi}\int_{\C}f(z)\overline{g(z)}e^{-\alpha |z|^2}\dd A(z).
\]
We refer to
\cite{Zhu2012} for further background on Fock spaces.

Given entire functions \(\Psi\) and \(\Phi\), the weighted composition
operator \(W_{\Psi,\Phi}^{(\alpha)}\) on \(\mathcal F_\alpha^2\) is defined by
\[
W_{\Psi,\Phi}^{(\alpha)}f=\Psi\cdot(f\circ\Phi).
\]
Throughout this paper, we assume that the weight function \(\Psi\) is not
identically zero. When \(\Psi=1\), \(W_{\Psi,\Phi}^{(\alpha)}\) reduces to
a composition operator. Composition operators on Hardy, Bergman, and other
Banach spaces of holomorphic functions have been studied extensively.
For comprehensive treatments of this subject, we refer the reader to
\cite{CowenMacCluer1995,Shapiro1993}.

When \(\alpha=1\), we write \(\F=\mathcal F_1^2\) and
$\ip{f}{g}=\ip{f}{g}_1,$ for all $\ f,g\in\F.$
In this case we write \(W_{\psi,\varphi}\) instead of
\(W_{\psi,\varphi}^{(1)}\).
A unitary dilation identifies
the problem on \(\mathcal F_\alpha^2\) with the corresponding problem on
\(\F\); the precise reduction is given in Section~2.

The boundedness problem for \(W_{\psi,\varphi}\)  on \(\F\) has a
rigid answer:
when \(\psi\not\equiv0\), the symbol \(\varphi\) must be affine.  More
precisely, by the results of Le \cite{Le2014},
\begin{align}\label{phi}
\varphi(z)=az+d,\qquad |a|\leq1,
\end{align}
with a corresponding growth condition on \(\psi\). This rigidity makes the Fock
space a natural setting for explicit operator-theoretic characterizations.

Complex symmetric operators provide a natural generalization of complex
symmetric matrices to operators on Hilbert spaces
\cite{GarciaProdanPutinar2014,GarciaPutinar2006,GarciaPutinar2007}.
A \emph{conjugation} on a complex Hilbert space $\mathbb{H}$ is a
anti-linear operator satisfying
$C^2=I$ and $\ip{C\bm{x}}{C\bm{y}}=\ip{\bm{y}}{\bm{x}}$ for all $\bm{x},\bm{y}\in\mathbb{H}$.
A bounded linear operator $T$ on $\mathbb{H}$ is called
\emph{complex symmetric} if there exists a conjugation $C$ on
$\mathbb{H}$ such that
$T=CT^*C$.

Concrete operators on analytic function spaces provide a natural setting in
which the abstract relation $T=CT^*C$ can be converted into verifiable
function-theoretic conditions.  Considerable progress has been made for
Toeplitz \cite{GuoZhu2014,JungKimKoLee2014,Noor2017,LiYangLu2020,BuChenZhu2021,ChenLeeZhao2022,BhuiaPradhanSarkar2025},
composition \cite{BourdonNoor2015,HanWang2022}, 
and weighted composition operators \cite{GarciaHammond2014,JungKimKoLee2014,HaiKhoi2016,LimKhoi2018,Hai2020,HuYangZhou2020,HaiTien2024},
but most available classifications prescribe the conjugation in advance.  The present paper addresses the
corresponding unrestricted problem for weighted composition operators on the
one-dimensional Fock space.

Garcia and Hammond \cite{GarciaHammond2014} studied complex symmetric
weighted composition operators on weighted
Hardy spaces. They established necessary conditions,
constructed nonnormal examples, characterized operators relative to a standard
conjugation, and connected complex symmetry with fixed points and Koenigs
eigenfunctions. Their results have been applied and extended in later studies
of composition operators on analytic function spaces. In addition to these
results, they posed the following general classification problem:

\medskip

\noindent
\begin{minipage}{\textwidth}
\centering
\itshape
Characterize all complex symmetric composition operators $C_\varphi$
on the classical Hardy space $H^2$ or, more generally, on weighted
Hardy spaces $H^2(\beta)$.
\end{minipage}

\medskip

On the Fock space $\F,$
Hai and Khoi \cite{HaiKhoi2016} introduced the family of weighted composition conjugations
\begin{align}\label{f:C}
(\mathcal{C}_{\gamma,\tau,\kappa}f)(z)
=
\kappa e^{\tau z}\,
\overline{
f\!\left(\overline{\gamma z+\tau}\right)
},
\qquad
f\in\mathcal{F}^{2},
\end{align}
where \(\gamma,\tau,\kappa\in\mathbb{C}\) satisfy
$
|\gamma|=1,\
\overline{\gamma}\tau+\overline{\tau}=0,\
|\kappa|^{2}e^{|\tau|^{2}}=1.
$
They established necessary and sufficient conditions for a bounded
weighted composition operator to be symmetric with respect to a
conjugation of this form. The reproducing-kernel method used in their
work was subsequently applied to weighted composition operators in
several variables \cite{HaiKhoi2016CR,HaiKhoi2018,HaiTien2024}, and to
unbounded weighted composition operators \cite{Hai2020}.
Their results concerning spectra and eigenvectors were also
used in subsequent studies. Their characterization applies to the
specified family \(\mathcal{C}_{\gamma,\tau,\kappa}\), rather than to arbitrary
conjugations on \(\mathcal{F}^{2}\).
It is also worth noting that Han and Wang proved that every bounded composition operator 
on $\F$ is complex symmetric \cite{HanWang2022}.

To the best of our knowledge, 
a complete classification of complex symmetric weighted composition operators on $\F$ remains open. 
The works \cite{HaiKhoi2016,HaiKhoi2018,HaiTien2024} characterize such operators with respect to weighted composition conjugations, 
while Han and Wang \cite{HanWang2022} consider composition operators without weights. 
It remains to determine whether these results account for all complex symmetric weighted composition operators. 
In particular, must a conjugation implementing complex symmetry be of weighted-composition type?
This leads to the following question.

\begin{mainquestion}
Which bounded weighted composition operators on \(\F\) are complex symmetric?
\end{mainquestion}

Our answer is complete.  In view of \eqref{phi}, we proceed with the following analysis.
\begin{enumerate}[
  label=\textbf{Case \arabic*.},
  leftmargin=*,
  align=left,
  itemsep=0.3em,
  topsep=0.3em,
  parsep=0pt
]
  \item If \(a=0\), then \(W_{\psi,\varphi}\) has rank one.

  \item If \(|a|=1\), then \(W_{\psi,\varphi}\) is a scalar multiple of a
  unitary weighted composition operator. 

  \item Suppose that \(0<|a|<1\). After translating the fixed point of
  \(\varphi\) to the origin and applying a rotation, the question reduces to the following form:
  \begin{equation}\label{eq:in-reduced-form}
  \begin{gathered}
    \varphi(z)=az,\qquad 0<|a|<1,\\
    \psi(z)=\exp(uz^2+vz),\qquad |u|<\frac12,\\
    b=\frac{v}{1-a},\qquad
    c=\frac{u}{1-a^2}\in\left[0,\frac12\right).
  \end{gathered}
  \end{equation}
\end{enumerate}

Answering the Main Question requires determining the eigenvalues and eigenvectors of both $W_{\psi,\varphi}$ 
and $W_{\psi,\varphi}^{*}$. If $W_{\psi,\varphi}$ is complex symmetric, 
then its point spectrum is $\{a^n:n\geq 0\}$ \cite{HaiKhoi2016}. Without assuming complex symmetry, 
Mengestie \cite{Mengestie2022} proved that
$z^n\exp(cz^2+bz)\in \ker\left(W_{\psi,\varphi}-a^{n}I\right)$. 
However, a complete characterization of the point spectrum and eigenspaces of $W_{\psi,\varphi}^{*}$ remains difficult.
In this paper, without assuming complex symmetry, we show that the
eigenvectors of $W_{\psi,\varphi}$ and $W_{\psi,\varphi}^{*}$ are given by the coefficient functions in the
Taylor expansions of the following two generating functions:
\[
Q(t,z)
=
\exp(tz+cz^2+bz)
=
\sum_{n=0}^{\infty}\frac{q_n(z)}{n!}t^n
\]
and
\[
H(t,z)
=
\exp\!\bigl\{t(z-\overline b)-ct^2\bigr\}
=
\sum_{n=0}^{\infty}\frac{h_n(z)}{n!}t^n,
\]
where $q_n(z)=z^nq(z),\ q(z)=\exp(cz^2+bz).$
Moreover, each
polynomial \(h_n\) is obtained from a Hermite polynomial by an
appropriate translation and dilation.

Our first main result is stated below and proved in
Theorem~\ref{lem:eigenvectors}.

\medskip
\noindent\textbf{Theorem A.}
Let \(W_{\psi,\varphi}\) be a bounded weighted composition operator with
symbols of the form \eqref{eq:in-reduced-form}. 
For each nonnegative integer \(n\), 
\(a^{n}\) and \(\bar a^{\,n}\) are eigenvalues of
\(W_{\psi,\varphi}\) and \(W_{\psi,\varphi}^{*}\), respectively,
and the corresponding eigenspaces are one-dimensional. More precisely,
\[
\ker\!\left(W_{\psi,\varphi}-a^{n}I\right)
   =\operatorname{span}\{q_n\},
\qquad
\ker\!\left(W_{\psi,\varphi}^{*}-\bar a^{\,n}I\right)
   =\operatorname{span}\{h_n\}.
\]

Next, we show that $\{q_n\}$ and $\{h_n\}$ form a complete 
biorthogonal system. Comparing their Gram matrices then yields our second 
main result, which is stated below and proved in 
Theorem~\ref{thm:main}.

\medskip
\noindent\textbf{Theorem B.}
Let \(W_{\psi,\varphi}\) be a bounded weighted composition operator with
symbols of the form \eqref{eq:in-reduced-form}. 
Then \(W_{\psi,\varphi}\) is complex
symmetric if and only if either
\begin{equation*}
\begin{gathered}
c=0;\\
\text{or}\quad
0<c<\frac12
\quad\text{and}\quad
\frac{(\operatorname{Re}b)^2}{1-2c}
=
\frac{(\operatorname{Im}b)^2}{1+2c}.
\end{gathered}
\end{equation*}
In either case, \(W_{\psi,\varphi}\) is complex symmetric with respect to
the conjugation \(J\) defined by
\[
(Jf)(z)
=
\frac{q(z)}{\pi\|q\|}
\int_{\mathbb C}
q(w)e^{\rho zw}\overline{f(w)}
e^{-|w|^2}\,dA(w),
\qquad f\in \F,
\]
where \(\rho\) is defined in \eqref{eq:rho}.

The above conjugation has the Gaussian integral kernel
\[
K(z,w)=\exp\bigl(cz^2+bz+cw^2+bw+\rho zw\bigr).
\]
When $b=0$, composing $J$ with the standard conjugation gives,
up to a unimodular scalar, a canonical integral operator in the precise sense of
Dong and Zhu \cite{DongZhu2024}; see Proposition~\ref{prop:canonical-relation}.

In Section~2, by ruling out the cases already known to be complex symmetric
and exploiting unitary equivalence, we reduce the Main Question under consideration
to the form given in \eqref{eq:in-reduced-form}.
Section~3 introduces two key generating functions, which are then used to
characterize the point spectrum and eigenspaces of the reduced weighted
composition operator.
Section~4 proves Theorem~B.
Section~5 transfers the results
to weighted Fock spaces via dilation, and determines the spectra of
complex symmetric weighted composition operators.

\section{Preliminaries and Reductions}

The reproducing kernel of $\mathcal F_\alpha^2$ at $w\in\C$ is
\[
  K_w^{(\alpha)}(z)=\e^{\alpha z\overline w},
  \qquad z\in\C.
\]
It satisfies the reproducing property
\[
  f(w)=\ip{f}{K_w^{(\alpha)}}_\alpha,
  \qquad f\in\mathcal F_\alpha^2.
\]
The normalized reproducing kernel is
$k_w^{(\alpha)}(z)=\e^{\alpha z\overline w-\alpha|w|^2/2}.$
The monomials
\[
  e_n^{(\alpha)}(z)=\left(\frac{\alpha^n}{n!}\right)^{1/2}z^n,
  \qquad n=0,1,2,\ldots ,
\]
form the standard orthonormal basis of \(\mathcal F_\alpha^2\).
Define the dilation operator
\[
    D_\alpha:\mathcal F_\alpha^2\to \mathcal F^2
\]
by $(D_\alpha f)(z)=f\left(\frac{z}{\sqrt{\alpha}}\right),
 z\in\mathbb C.$
Then \(D_\alpha\) is unitary, with inverse given by
\[
    (D_\alpha^{-1}g)(z)=g(\sqrt{\alpha}z),
    \qquad z\in\mathbb C.
\]

Let \(W_{\Psi,\Phi}^{(\alpha)}\) be a weighted composition operator on
\(\mathcal F_\alpha^2\).
Under the unitary operator \(D_\alpha:\mathcal F_\alpha^2\to\mathcal F^2\),
we have
\[
    D_\alpha W_{\Psi,\Phi}^{(\alpha)}D_\alpha^{-1}
    =
    W_{\psi,\varphi},
\]
where
\[
    \psi(z)=\Psi\left(\frac z{\sqrt\alpha}\right),
    \qquad
    \varphi(z)=\sqrt\alpha\,\Phi\left(\frac z{\sqrt\alpha}\right).
\]
Consequently, it suffices to study weighted composition operators on $\F$;
the corresponding results for weighted Fock spaces then follow by unitary
equivalence.

We reduce the Main Question to a simpler form through three steps.
The boundedness theorem, together with a zero-free argument, determines the
possible symbols in the nonconstant case.  The elementary cases, which are
already known to be complex symmetric, are then excluded from further
consideration.  Weyl unitary operators and rotations are used to move the
fixed point of the affine symbol to the origin and to normalize the quadratic
coefficient of the weight.  At each step of the reduction, complex symmetry
is preserved under unitary equivalence and multiplication by a nonzero scalar.
\subsection{Step 1: The form of the symbols}

We first recall the boundedness criteria for weighted
composition operators on the Fock space. The following result combines
\cite[Proposition~2.1 and Theorem~2.2]{Le2014} and is stated
here in our notation for later use.
\begin{proposition}\label{prop:boundedness}
Let $\psi$ and $\varphi$ be entire functions on
$\C$ such that $\psi$ is not identically zero.
\(W_{\psi,\varphi}\) is bounded on
\(\F\) if and only if $\psi\in \F,$
and
\begin{align}
 \sup_{z\in\C}|\psi(z)|^2
  \exp\left(|\varphi(z)|^2-|z|^2\right)<\infty .
\end{align}
In this case, $\varphi(z)=az+d$ with $|a|\leq1.$
If $|a|=1,$ then
\(\psi(z)=\eta e^{-a\overline d\,z}\) for some
\(\eta\in\mathbb C\setminus\{0\}\).
\end{proposition}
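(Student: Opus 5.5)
The plan is to use the reproducing kernel and the adjoint action $W_{\psi,\varphi}^*K_w=\overline{\psi(w)}K_{\varphi(w)}$. The kernel on $\F$ is $K_w(z)=e^{z\overline w}$ with $\|K_w\|^2=e^{|w|^2}$.

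\textbf{Necessity of the two conditions.} Suppose $W_{\psi,\varphi}$ is bounded with norm $M$. Applying the operator to the constant $1\in\F$ gives $W_{\psi,\varphi}1=\psi$, so $\psi\in\F$. Next use
\[
\|W_{\psi,\varphi}^*K_w\|^2=|\psi(w)|^2e^{|\varphi(w)|^2}\le M^2e^{|w|^2},
\]
which is exactly the stated supremum condition. Conversely, assume the supremum is finite. Sufficiency is the harder direction, and I would cite Le's argument rather than reprove it. It proceeds through Carleson-measure / Berezin-type estimates for the pullback measure $|\psi|^2e^{-|z|^2}dA\circ\varphi^{-1}$, combined with the sub-mean-value inequality on discs of fixed radius. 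Since the proposition is explicitly quoted from \cite[Proposition~2.1 and Theorem~2.2]{Le2014}, I would only indicate these ideas.

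\textbf{Showing $\varphi$ is affine with $|a|\le 1$.} Set $g=\log|\psi|$, which is subharmonic and not identically $-\infty$. The bound gives $|\varphi(z)|^2\le |z|^2-2\log|\psi(z)|+C$.

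First, $\psi\in\F$ implies the pointwise estimate $|\psi(z)|\le\|\psi\|e^{|z|^2/2}$, so $\log|\psi|=O(|z|^2)$ from above. Averaging over circles, the mean $\frac1{2\pi}\int_0^{2\pi}\log|\psi(re^{i\theta})|\,d\theta$ is nondecreasing in $r$ and bounded below by a constant for large $r$ (Jensen's formula, assuming $\psi(0)\neq 0$ after a translation if needed). Integrating the inequality over the circle of radius $r$ then yields
\[
\frac1{2\pi}\int_0^{2\pi}|\varphi(re^{i\theta})|^2\,d\theta\le r^2+C'.
\]
By Parseval, $\sum_n|\varphi_n|^2r^{2n}\le r^2+C'$. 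Letting $r\to\infty$ forces $\varphi_n=0$ for $n\ge2$ and $|\varphi_1|\le1$, so $\varphi(z)=az+d$ with $|a|\le1$.

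\textbf{The case $|a|=1$.} The bound becomes
\[
|\psi(z)|^2e^{|d|^2+2\operatorname{Re}(az\overline d)}\le C,
\]
so the entire function $\psi(z)e^{a\overline d z}$ is bounded. By Liouville it is a constant $\eta$, and $\eta\ne0$ because $\psi\not\equiv0$. Hence $\psi(z)=\eta e^{-a\overline d z}$.

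The main obstacle is the sufficiency direction of the boundedness criterion, which I defer to \cite{Le2014}. Among the steps actually carried out, the delicate one is the lower bound on the circular means of $\log|\psi|$. Jensen's formula handles it, since the zeros only increase the mean.
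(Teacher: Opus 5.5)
The paper gives no argument of its own for this proposition; it quotes the whole statement from \cite[Proposition~2.1 and Theorem~2.2]{Le2014}. Your proposal is correct and makes most of it self-contained, so what it adds over the paper is independence from the citation. Necessity comes from $W_{\psi,\varphi}1=\psi$ and the kernel identity. Affineness comes from the circle-mean bound $\sum_n|\varphi_n|^2r^{2n}\le r^2+C'$, and the case $|a|=1$ follows by Liouville. Two small remarks on that part. First, the estimate $|\psi(z)|\le\|\psi\|\e^{|z|^2/2}$ is never used, since only a lower bound on the circular means of $\log|\psi|$ matters. Second, no translation is needed when $\psi(0)=0$: these means are finite and nondecreasing in $r$ for the subharmonic function $\log|\psi|\not\equiv-\infty$. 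What you should not do is defer sufficiency as ``the harder direction''. Your affineness argument uses only the supremum condition and $\psi\not\equiv0$, never boundedness. So under the two hypotheses you already have $\varphi(z)=az+d$ with $|a|\le1$, and no Carleson-measure or Berezin machinery is needed. If $a\ne0$, let $C$ be the supremum and change variables $w=az+d$ (real Jacobian $|a|^2$):
\[
\|W_{\psi,\varphi}f\|^2\le\frac{C}{\pi}\int_{\C}|f(az+d)|^2\e^{-|az+d|^2}\dd A(z)=\frac{C}{|a|^2}\,\|f\|^2 .
\]
If $a=0$, then $W_{\psi,\varphi}f=f(d)\psi$ with $|f(d)|\le\e^{|d|^2/2}\|f\|$. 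This is the only place the hypothesis $\psi\in\F$ enters; for $a\ne0$ it follows automatically from $\psi=W_{\psi,\varphi}1$. With these two lines your argument proves the full proposition without appeal to \cite{Le2014}, which is more than the paper itself does.
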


Whenever \(W_{\psi,\varphi}\) is bounded, the adjoint acts on kernel functions by
\begin{equation}\label{eq:adjoint-kernel}
  W_{\psi,\varphi}^*K_w
  =\overline{\psi(w)}K_{\varphi(w)},\qquad w\in\C.
\end{equation}

A useful property of complex symmetric weighted composition operators is that
their weight functions are zero-free; see \cite[Theorem 3.1]{HaiKhoi2016}.
\begin{lemma}\label{lem:weights}
Suppose that \(\varphi\) is nonconstant and $\psi$ is not identically zero.
If \(W_{\psi,\varphi}\) is complex symmetric, then \(\psi\) is zero-free on  \(\mathbb C\).
\end{lemma}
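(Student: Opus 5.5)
Suppose, for contradiction, that \(\psi(w_0)=0\) for some \(w_0\in\C\). Let \(C\) be a conjugation with \(W_{\psi,\varphi}=CW_{\psi,\varphi}^*C\). By \eqref{eq:adjoint-kernel},
\[
W_{\psi,\varphi}^*K_{w_0}=\overline{\psi(w_0)}K_{\varphi(w_0)}=0,
\]
so \(K_{w_0}\in\ker W_{\psi,\varphi}^*\). Complex symmetry gives \(W_{\psi,\varphi}C=CW_{\psi,\varphi}^*\), hence
\[
W_{\psi,\varphi}(CK_{w_0})=CW_{\psi,\varphi}^*K_{w_0}=0 .
\]
Since \(C\) is isometric and \(K_{w_0}\neq0\), the function \(g:=CK_{w_0}\) is a nonzero element of \(\ker W_{\psi,\varphi}\). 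It therefore suffices to show that \(W_{\psi,\varphi}\) is injective whenever \(\varphi\) is nonconstant and \(\psi\not\equiv0\).

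For injectivity, suppose \(\psi\cdot(g\circ\varphi)\equiv0\) on \(\C\). The zero set of the nonzero entire function \(\psi\) is discrete, so \(g\circ\varphi\) vanishes on a set with accumulation points. Hence \(g\circ\varphi\equiv0\). Because \(\varphi\) is a nonconstant entire function, the open mapping theorem shows that \(\varphi(\C)\) is a nonempty open set. Thus \(g\) vanishes on an open set, and by the identity theorem \(g\equiv0\). This contradicts \(g\neq0\).

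Boundedness is implicit in complex symmetry, so Proposition~\ref{prop:boundedness} even gives \(\varphi(z)=az+d\) with \(a\neq0\), making \(\varphi\) surjective. This is not needed: openness of \(\varphi(\C)\) already suffices. The only delicate step is recognizing that complex symmetry transports kernel vectors of \(W^*_{\psi,\varphi}\) into kernel vectors of \(W_{\psi,\varphi}\) through the anti-linear isometry \(C\). Everything else is routine.
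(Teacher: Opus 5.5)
Your proof is correct. The paper gives no proof of its own here; it only cites Hai and Khoi. Your argument is the standard kernel-function argument from that literature (going back to Garcia and Hammond): at a zero $w_0$ of $\psi$ one has $K_{w_0}\in\ker W_{\psi,\varphi}^*$, the conjugation $C$ carries this to a nonzero vector $CK_{w_0}\in\ker W_{\psi,\varphi}$, and this contradicts the injectivity of $W_{\psi,\varphi}$ for $\psi\not\equiv0$ and nonconstant $\varphi$. It also works for an arbitrary conjugation $C$, which is what this paper needs, since it does not fix the conjugation in advance.
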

Let $\mathbb{H}$ be a Hilbert space. For $\bm{x},\bm{y}\in\mathbb{H}$, define the
bounded linear operator $\bm{x} \otimes \bm{y}$ on $\mathbb{H}$ by
\[
    (\bm{x} \otimes \bm{y})f=\langle f,\bm{y}\rangle \bm{x},
    \qquad f\in\mathbb{H}.
\]
If $\bm{x}\ne0$ and $\bm{y}\ne0$, then $\bm{x} \otimes \bm{y}$ has rank one.
\begin{lemma}\label{rank1}
Let \(d\in\mathbb C\), set \(\Phi\equiv d\) and $\Psi\not\equiv0$.
Then
\(W_{\Psi,\Phi}^{(\alpha)}=\Psi\otimes K_d^{(\alpha)}\) is a rank-one
operator and is complex symmetric.
\end{lemma}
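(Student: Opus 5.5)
First I verify the formula $W^{(\alpha)}_{\Psi,\Phi}=\Psi\otimes K_d^{(\alpha)}$. For $f\in\mathcal F_\alpha^2$ the reproducing property gives
\[
W^{(\alpha)}_{\Psi,\Phi}f=\Psi\cdot f(d)=\ip{f}{K_d^{(\alpha)}}_\alpha\,\Psi=(\Psi\otimes K_d^{(\alpha)})f .
\]
The operator is bounded precisely when $\Psi\in\mathcal F_\alpha^2$, which is implicit in the statement. Since $\Psi\not\equiv0$ and $K_d^{(\alpha)}\neq0$, it has rank one.

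Next I prove complex symmetry using the general fact that every rank-one operator $\bm x\otimes\bm y$ on a Hilbert space is complex symmetric. For a conjugation $C$ and any $f$, we have
\[
C(\bm y\otimes\bm x)Cf=C\bigl(\ip{Cf}{\bm x}\bm y\bigr)=\overline{\ip{Cf}{\bm x}}\,C\bm y=\ip{\bm x}{Cf}\,C\bm y .
\]
Using $\ip{\bm x}{Cf}=\ip{C Cf}{C\bm x}=\ip{f}{C\bm x}$, this gives $C(\bm x\otimes\bm y)^*C=C\bm x\otimes C\bm y$. The adjoint of $\bm x\otimes\bm y$ is $\bm y\otimes\bm x$.

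So it suffices to find a conjugation $C$ with $C\bm x=\lambda\bm x$ and $C\bm y=\bar\mu\bm y$ for scalars satisfying $\lambda\bar\mu=1$. Then $C\bm x\otimes C\bm y=\lambda\mu\,\bm x\otimes\bm y$ up to the appropriate conjugate. More simply, I look for $C$ with $C\bm y=\bm x$ (after rescaling) and $C\bm x=\bm y$.

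The standard construction handles the cases as follows.

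\textbf{Case $\bm x,\bm y$ linearly independent.} Let $M=\operatorname{span}\{\bm x,\bm y\}$, a two-dimensional subspace. Any conjugation on $M$ extends to $\mathbb H$ by taking an arbitrary conjugation on $M^\perp$, for instance complex conjugation of coordinates in some orthonormal basis. On $M$ I need an anti-linear isometric involution $C_0$ with $C_0(\bm x\otimes\bm y)^*C_0=\bm x\otimes\bm y$.

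Every operator on a finite-dimensional space of dimension $2$ that is a $2\times2$ matrix is complex symmetric. Indeed, every square complex matrix is similar to a complex symmetric matrix, and in fact every $2\times2$ matrix is unitarily equivalent to a complex symmetric one. This is the known result of Garcia--Putinar \cite{GarciaPutinar2007} that all operators on $\C^2$ are complex symmetric.

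I restrict $T=\bm x\otimes\bm y$ to $M$. $M$ is invariant under $T$, and also under $T^*=\bm y\otimes\bm x$. $T$ vanishes on $M^\perp$, since $M^\perp\perp\bm y$. So $T=T|_M\oplus0$, and a conjugation $C_0$ on $M$ making $T|_M$ complex symmetric, direct-summed with any conjugation on $M^\perp$, works.

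\textbf{Case $\bm x,\bm y$ linearly dependent.} This happens when $\Psi$ is a multiple of $K_d^{(\alpha)}$. Then $T=\mu\,\bm y\otimes\bm y$, and choosing a conjugation with $C\bm y=\bm y$ gives $CT^*C=T$. Such a conjugation is obtained by completing $\bm y/\|\bm y\|$ to an orthonormal basis and conjugating coordinates.

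The only substantive point is the $2\times2$ fact. If I want to avoid citing it, I can compute directly: with respect to an orthonormal basis of $M$, $T|_M$ has rank one. Writing $\bm x=\beta\bm y+\bm z$ with $\bm z\perp\bm y$, I produce an explicit conjugation of the form ``unitary times coordinatewise conjugation'' fixing the matrix. This is where care is needed. An alternative that I expect to be cleanest for the Fock-space setting: note $T^*T$ and $TT^*$ are rank one, and use the polar decomposition $T=U|T|$. A rank-one partial isometry together with a positive operator always admits such a symmetrization.
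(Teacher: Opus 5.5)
Your identification $W^{(\alpha)}_{\Psi,\Phi}=\Psi\otimes K_d^{(\alpha)}$ and your rank-one conclusion match the paper. You reach complex symmetry by a different route, and that route is correct.

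\textbf{Comparison with the paper.} The paper simply cites Garcia--Wogen's corollary that every rank-one operator is complex symmetric. You instead argue as follows:
\begin{enumerate}
\item $M=\operatorname{span}\{\bm x,\bm y\}$ reduces $T=\bm x\otimes\bm y$, and $T|_{M^\perp}=0$.
\item Hence a conjugation on $M$ that makes $T|_M$ symmetric, summed with any conjugation on $M^\perp$, works on the whole space.
\item Every operator on a two-dimensional space is complex symmetric, and you treat the dependent case directly with a conjugation fixing $\bm y$.
\end{enumerate}
This replaces one citation by another; both statements follow from the Garcia--Wogen theorem that operators algebraic of degree two are complex symmetric. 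What your route buys is that it shows the conjugation is constrained only on a two-dimensional subspace.

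\textbf{A citation-free version.} Your phrase ``$C\bm y=\bm x$, $C\bm x=\bm y$ after rescaling'' already points to a fully elementary proof.
\begin{itemize}
\item Replace $\bm x,\bm y$ by $s\bm x,\ \bm y/s$ with $s^2=\|\bm y\|/\|\bm x\|$. This leaves $T$ unchanged and gives $\|\bm x\|=\|\bm y\|$.
\item For independent $\bm x,\bm y$, set $C(\alpha\bm x+\beta\bm y)=\overline\alpha\,\bm y+\overline\beta\,\bm x$ on $M$, and extend by any conjugation on $M^\perp$.
\item $C$ is an isometry on $M$: the norms agree because $\|\bm x\|=\|\bm y\|$ and the cross terms $2\operatorname{Re}(\alpha\overline\beta\ip{\bm x}{\bm y})$ and $2\operatorname{Re}(\overline\alpha\beta\ip{\bm y}{\bm x})$ coincide.
\item Clearly $C^2=I$, and $CT^*C=C\bm y\otimes C\bm x=\bm x\otimes\bm y=T$.
\end{itemize}
Your polar-decomposition remark is then unnecessary.

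\textbf{One slip to fix.} Your computation ends with $\ip{f}{C\bm x}\,C\bm y$, which is $(C\bm y\otimes C\bm x)f$. So the correct identity is $C(\bm x\otimes\bm y)^*C=C\bm y\otimes C\bm x$, not $C\bm x\otimes C\bm y$. As a consequence, your criterion ``$C\bm x=\lambda\bm x$, $C\bm y=\overline\mu\bm y$'' fails when $\bm x,\bm y$ are independent, because it produces a multiple of $\bm y\otimes\bm x$ rather than of $\bm x\otimes\bm y$. The swap condition you switch to is the right one. Your final argument never uses the incorrect version, so this is not a gap, but that sentence should be removed.
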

\begin{proof}
It follows from \cite[Corollary 5]{GarciaWogen2010} that every rank-one
operator is complex symmetric.  
If \(\Phi\equiv d\), then
\[
\begin{aligned}
\bigl(W_{\Psi,\Phi}^{(\alpha)}f\bigr)(z)
&=\Psi(z)f\bigl(\Phi(z)\bigr)\\
&=\Psi(z)f(d)\\
&=\bigl\langle f,K_d^{(\alpha)}\bigr\rangle_\alpha\Psi(z)\\
&=\bigl((\Psi\otimes K_d^{(\alpha)})f\bigr)(z),
  \quad f\in\mathcal F_\alpha^2.
\end{aligned}
\]
Since $\Psi=W_{\Psi,\Phi}^{(\alpha)}1\in\mathcal F_\alpha^2$ and both $\Psi$ and $K_d^{(\alpha)}$ are nonzero, the identity
$W_{\Psi,\Phi}^{(\alpha)}=\Psi\otimes K_d^{(\alpha)}$
shows that $W_{\Psi,\Phi}^{(\alpha)}$ is a rank-one operator and hence is complex symmetric.
\end{proof}

Izuchi \cite{Izuchi2005} proved that an entire function
\(f\in\mathcal F_{1/2}^{2}\) is cyclic if and only if it is nonvanishing on
\(\mathbb C\). Via the dilation
\(f(z)\mapsto f(\sqrt{2}\,z)\),
this result can be reformulated for the Fock space \(\F\) considered here.
Under this transformation, the condition \(|\alpha|<1/4\) becomes
\(|\beta|<1/2\). Hence we obtain the following proposition; see
\cite[Theorem~1.1 and Corollary~1.2]{Izuchi2005}.

Recall that \(f\in\mathcal F^2\) is cyclic if \
$\overline{\operatorname{span}}\{z^nf:n\ge0\}=\mathcal F^2.$
\begin{proposition}\label{prop:izuchi-cyclic}
Let $f$ be an entire function. Then the following are equivalent:
\begin{itemize}
\item[\textup{(i)}] $f\in\F$ and $f$ is zero-free on $\mathbb C$.
\item[\textup{(ii)}]
$f(z)=\exp(\beta z^{2}+\gamma z+\delta)$
for some $\beta,\gamma,\delta\in\mathbb C$ with
$|\beta|<\frac12$.
\item[\textup{(iii)}]$f$ is cyclic in $\F$.
\end{itemize}
\end{proposition}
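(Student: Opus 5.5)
The plan is to transfer Izuchi's theorem from $\mathcal F_{1/2}^2$ to $\F$ by a unitary dilation, checking that zero-freeness, the Gaussian form (ii) and cyclicity are each preserved under that dilation.

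\textbf{The dilation.} Put $D:\mathcal F_{1/2}^2\to\F$, $(Dg)(z)=g(\sqrt2\,z)$. A change of variables $w=\sqrt2 z$ shows
\[
\frac{1}{2\pi}\int|g(w)|^2e^{-|w|^2/2}\,dA(w)=\frac1\pi\int|g(\sqrt2 z)|^2e^{-|z|^2}\,dA(z),
\]
so $D$ is unitary and onto. Hence $g\in\mathcal F_{1/2}^2$ if and only if $f=Dg\in\F$.

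\textbf{Preservation of the three conditions.} Zero-freeness is clearly preserved by $D$. For the Gaussian form, $g(w)=\exp(\alpha w^2+\gamma' w+\delta)$ corresponds to $f(z)=\exp(2\alpha z^2+\sqrt2\gamma' z+\delta)$. Thus $\beta=2\alpha$, and the condition $|\alpha|<1/4$ becomes $|\beta|<1/2$. For cyclicity, $D(w^ng)=2^{n/2}z^nDg$, so $D$ maps $\operatorname{span}\{w^ng\}$ onto $\operatorname{span}\{z^nf\}$. Since $D$ is unitary, one closure is the whole space exactly when the other is.

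\textbf{Assembling the equivalences.} Izuchi's Theorem~1.1 and Corollary~1.2 give, on $\mathcal F_{1/2}^2$, that nonvanishing is equivalent to cyclicity, and that nonvanishing elements have the form $\exp(\alpha w^2+\gamma' w+\delta)$ with $|\alpha|<1/4$. Transporting these through $D$ gives (i)$\Leftrightarrow$(iii) and (i)$\Rightarrow$(ii).

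For (ii)$\Rightarrow$(i), it is cleaner to argue directly. The function $f=\exp(\beta z^2+\gamma z+\delta)$ is zero-free. To see that $f\in\F$, use $|f(z)|\le C\exp(|\beta||z|^2+|\gamma||z|)$. Since $2|\beta|<1$, the integral $\int|f|^2e^{-|z|^2}\,dA$ converges.

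The only delicate point is citing the correct form of Izuchi's statement, specifically its normalization of the Fock weight, so that the threshold $1/4$ indeed matches. If his space used a different weight convention, the dilation constant would need adjusting. However, the direct integrability check in (ii)$\Rightarrow$(i) already confirms that $1/2$ is the sharp threshold on $\F$.
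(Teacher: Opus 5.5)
Your proposal is correct and follows the paper's route: you transport Izuchi's Theorem~1.1 and Corollary~1.2 from $\mathcal F_{1/2}^2$ to $\F$ through the unitary dilation $g\mapsto g(\sqrt2\,\cdot)$, under which $|\alpha|<\frac14$ becomes $|\beta|<\frac12$, and you also supply the checks the paper leaves implicit (unitarity, preservation of cyclicity, and a direct proof of (ii)$\Rightarrow$(i)). One small correction to your closing remark: the integrability check only shows that $|\beta|<\frac12$ is sufficient, not that the threshold is sharp, so necessity (including the exclusion of $|\beta|=\frac12$) must come from Izuchi's corollary, or directly from Hadamard's theorem together with a Gaussian integrability computation.
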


\begin{proposition}\label{prop:cs-symbol-form}
Let \(W_{\psi_1,\varphi_1}\) be a nonzero bounded complex symmetric
weighted composition operator on \(\F\). If \(\varphi_1\) is nonconstant,
then there exist constants \(a,d,\beta,\gamma,\delta\in\C\) such that
\begin{equation}\label{eq:1-form}
\begin{gathered}
  \varphi_1(z)=az+d,\qquad 0<|a|\leq 1,\\
  \psi_1(z)=\exp\bigl(\beta z^2+\gamma z+\delta\bigr),
  \qquad |\beta|<\frac12,
\end{gathered}
\qquad z\in\C.
\end{equation}
\end{proposition}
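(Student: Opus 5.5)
The plan is to combine Proposition~\ref{prop:boundedness}, Lemma~\ref{lem:weights} and Proposition~\ref{prop:izuchi-cyclic}. The operator \(W_{\psi_1,\varphi_1}\) is nonzero, so \(\psi_1\not\equiv0\), and it is bounded. Proposition~\ref{prop:boundedness} then gives \(\varphi_1(z)=az+d\) with \(|a|\le1\), and it also gives \(\psi_1\in\F\). By hypothesis \(\varphi_1\) is nonconstant, which forces \(a\neq0\). This yields the first line of \eqref{eq:1-form}, namely \(0<|a|\le1\).

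For the weight, I use complex symmetry. Since \(\varphi_1\) is nonconstant and \(\psi_1\not\equiv0\), Lemma~\ref{lem:weights} shows that \(\psi_1\) is zero-free on \(\C\). Together with \(\psi_1\in\F\), this means \(\psi_1\) satisfies condition~(i) of Proposition~\ref{prop:izuchi-cyclic}. The implication (i)\(\Rightarrow\)(ii) then gives \(\psi_1(z)=\exp(\beta z^2+\gamma z+\delta)\) with \(|\beta|<\tfrac12\), which is the second line of \eqref{eq:1-form}.

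As a consistency check, when \(|a|=1\) the boundedness proposition gives the sharper form \(\psi_1(z)=\eta e^{-a\overline d z}\). This is the special case \(\beta=0\), \(\gamma=-a\overline d\), \(e^{\delta}=\eta\), so it agrees with the general conclusion and that case needs no separate treatment.

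There is no serious obstacle: the proof is a direct chaining of the cited results. The one point requiring care is that \(\psi_1\in\F\) comes from boundedness, since \(\psi_1=W_{\psi_1,\varphi_1}1\). This membership is what allows the Fock-space growth restriction \(|\beta|<\tfrac12\) in Izuchi's theorem to be applied.
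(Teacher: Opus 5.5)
Your proposal is correct and follows essentially the same route as the paper: Proposition~\ref{prop:boundedness} gives the affine form of \(\varphi_1\) with \(a\neq0\) from nonconstancy, \(\psi_1=W_{\psi_1,\varphi_1}1\in\F\), Lemma~\ref{lem:weights} makes \(\psi_1\) zero-free, and Proposition~\ref{prop:izuchi-cyclic} gives the exponential form with \(|\beta|<\tfrac12\). Your \(|a|=1\) consistency check is correct but not needed.
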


\begin{proof}
By Proposition~\ref{prop:boundedness},
\(\varphi_1(z)=az+d\) for some \(a,d\in\C\) with \(|a|\leq1\).
Since \(\varphi_1\) is nonconstant, \(a\neq0\), so \(0<|a|\leq1\).
Since \(\psi_1=W_{\psi_1,\varphi_1}1\in \F\),
by Lemma~\ref{lem:weights}, the function \(\psi_1\) is zero-free on $\mathbb C$.
The conclusion now follows from Proposition~\ref{prop:izuchi-cyclic}.
\end{proof}

The boundedness and compactness of weighted composition operators with
symbols of the form \eqref{eq:1-form} were characterized in
\cite[Theorem 3.2]{CarrollGilmore2021}.
\begin{proposition}\label{prop:explicit-boundedness}
Let \(W_{\psi_1,\varphi_1}\) be the weighted composition operator
whose symbols are of the form \eqref{eq:1-form} and satisfy
\(0<|a|<1\). Then the following statements hold.
\begin{enumerate}
\item[\textup{(i)}]
The operator \(W_{\psi_1,\varphi_1}\) is compact on \(\F\) if and only if
\begin{equation}\label{eq:compact-condition}
2|\beta|<1-|a|^2.
\end{equation}

\item[\textup{(ii)}]
The operator \(W_{\psi_1,\varphi_1}\) is bounded but not compact on
\(\F\) if and only if
\begin{equation}\label{eq:critical-condition}
2|\beta|=1-|a|^2
\end{equation}
and
\begin{equation*}
|\beta|(\gamma+a\overline d)
+\beta\,\overline{\gamma+a\overline d}=0.
\end{equation*}
\end{enumerate}
\end{proposition}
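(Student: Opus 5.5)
Since the paper cites this result from Carroll--Gilmore, the plan is to rederive it from Proposition~\ref{prop:boundedness} and the Hilbert--Schmidt/kernel criterion for compactness. With $\psi_1(z)=\exp(\beta z^2+\gamma z+\delta)$ and $\varphi_1(z)=az+d$, the boundedness quantity is
\[
M(z)=\log\Bigl(|\psi_1(z)|^2e^{|\varphi_1(z)|^2-|z|^2}\Bigr)
=2\operatorname{Re}(\beta z^2+\gamma z+\delta)+|az+d|^2-|z|^2,
\]
and boundedness is equivalent to $\sup_z M(z)<\infty$ (note $\psi_1\in\F$ automatically since $|\beta|<\tfrac12$). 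Expanding gives
\[
M(z)=2\operatorname{Re}(\beta z^2)-(1-|a|^2)|z|^2+2\operatorname{Re}\bigl((\gamma+a\overline d)z\bigr)+\text{const}.
\]
The quadratic part $2\operatorname{Re}(\beta z^2)-(1-|a|^2)|z|^2$ has maximum over $|z|=r$ equal to $(2|\beta|-(1-|a|^2))r^2$, attained along the ray where $\beta z^2=|\beta|r^2$.

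\textbf{Boundedness.} If $2|\beta|>1-|a|^2$, then $M\to+\infty$ along that ray, so $W_{\psi_1,\varphi_1}$ is unbounded. If $2|\beta|<1-|a|^2$, the quadratic form is negative definite, so $M$ is bounded above. In the critical case $2|\beta|=1-|a|^2$ (with $\beta\neq0$, since $|a|<1$), write $\beta=|\beta|e^{i\theta}$. The quadratic form vanishes exactly on the real line $L=\{te^{-i\theta/2}:t\in\mathbb R\}$ and is $\le -c\,\operatorname{dist}(z,L)^2$ elsewhere. The linear term is then bounded above if and only if it vanishes on $L$, i.e. $\operatorname{Re}\bigl((\gamma+a\overline d)e^{-i\theta/2}\bigr)=0$. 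Indeed, if it vanishes on $L$, completing the square in the direction transverse to $L$ bounds $M$; otherwise $M$ grows linearly along $L$. Multiplying by $|\beta|^{1/2}e^{i\theta/2}$, this condition can be rewritten as $|\beta|w+\beta\overline w=0$ with $w=\gamma+a\overline d$. This is the only algebraic check I expect to require care: showing that $\operatorname{Re}(we^{-i\theta/2})=0$ is equivalent to $|\beta|w+\beta\overline{w}=0$, which follows since $|\beta|w+\beta\overline w=|\beta|e^{i\theta/2}\cdot 2\operatorname{Re}(we^{-i\theta/2})$.

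\textbf{Compactness.} Compactness of a bounded $W_{\psi,\varphi}$ on $\F$ is equivalent to $|\psi(z)|^2e^{|\varphi(z)|^2-|z|^2}\to0$ as $|z|\to\infty$; this is Le's criterion, which I will assume alongside Proposition~\ref{prop:boundedness}. Alternatively, it can be checked directly: $W^*k_z=\overline{\psi(z)}e^{(|\varphi(z)|^2-|z|^2)/2}k_{\varphi(z)}$, and $k_z\to0$ weakly. In the strict case the negative definite quadratic part forces $M\to-\infty$, which gives compactness. In the critical bounded case, along $L$ the function $M$ is constant, so $\|W^*k_z\|\not\to0$ and the operator is not compact. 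Combining the two parts yields (i) and (ii).

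The main obstacle is the critical case: carefully establishing that the degenerate quadratic plus linear form is bounded above exactly when the linear coefficient is orthogonal to the null direction, and translating that into the stated identity.
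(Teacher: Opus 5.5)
Your argument is correct. The paper does not prove this proposition; it quotes it from Carroll--Gilmore (Theorem~3.2), so your proposal is an independent, essentially self-contained route rather than a reconstruction of the paper's argument.

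You reduce everything, through the pointwise criteria of Proposition~\ref{prop:boundedness}, to the real function \(M(z)=2\operatorname{Re}(\beta z^2)-(1-|a|^2)|z|^2+2\operatorname{Re}\bigl((\gamma+a\overline d)z\bigr)+\mathrm{const}\). The sign of the quadratic part then gives the classification:
\begin{itemize}
\item Negative definite quadratic part: the operator is compact.
\item Degenerate quadratic part: the operator is bounded exactly when the linear term vanishes on the null line \(e^{-i\theta/2}\R\).
\item Indefinite quadratic part: the operator is unbounded.
\end{itemize}
The gain over the bare citation is transparency. The extra linear condition in (ii) is seen to be the vanishing of the linear term on the null line of the degenerate form. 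Noncompactness in the critical bounded case comes from \(M\) being constant on that line, because \(\|W_{\psi_1,\varphi_1}^*k_z\|=e^{M(z)/2}\) while \(k_z\to0\) weakly. The citation is shorter, but it hides both mechanisms.

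Two small points.
\begin{itemize}
\item Your ``direct check'' with \(W^*k_z\) proves only one direction: compactness forces \(M\to-\infty\). For the strict case \(2|\beta|<1-|a|^2\) you still need the converse. Either assume Le's compactness theorem, as you say, or use the elementary identity
\[
\|W_{\psi_1,\varphi_1}\|_{HS}^2=\frac1\pi\int_{\C}e^{M(z)}\,dA(z).
\]
This follows by summing \(\|W_{\psi_1,\varphi_1}(z^n/\sqrt{n!})\|^2\) over \(n\) and using \(\sum_n|\varphi_1(z)|^{2n}/n!=e^{|\varphi_1(z)|^2}\). The integral is finite when \(M(z)\le-\varepsilon|z|^2+O(|z|)\). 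With this identity the whole proof rests only on Proposition~\ref{prop:boundedness}.
\item In the final algebraic step the multiplier is \(|\beta|e^{i\theta/2}\), not \(|\beta|^{1/2}e^{i\theta/2}\). The identity you actually display, \(|\beta|w+\beta\overline w=|\beta|e^{i\theta/2}\cdot2\operatorname{Re}(we^{-i\theta/2})\), is the correct one.
\end{itemize}
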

\subsection{Step 2: Elementary complex symmetric cases}

\begin{lemma}\label{nozero}
If \(\lambda\ne0\), then an operator \(T\) is complex symmetric if and only if
\(\lambda T\) is complex symmetric.
\end{lemma}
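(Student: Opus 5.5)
The plan is to argue directly from the definition, using the same conjugation $C$ for $T$ and for $\lambda T$. Suppose first that $T$ is complex symmetric, so there is a conjugation $C$ on $\mathbb H$ with $T=CT^*C$. I would show that $\lambda T$ is complex symmetric with respect to this same $C$. The one point requiring care is anti-linearity. Since $C$ is anti-linear, for every $\bm x\in\mathbb H$ we have $C(\mu\bm x)=\overline{\mu}\,C\bm x$. Moreover $(\lambda T)^*=\overline{\lambda}\,T^*$.

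Using these two facts, compute
\[
C(\lambda T)^*C\bm x
= C\bigl(\overline{\lambda}\,T^*C\bm x\bigr)
= \lambda\, CT^*C\bm x
= \lambda T\bm x,
\qquad \bm x\in\mathbb H.
\]
Hence $\lambda T=C(\lambda T)^*C$, so $\lambda T$ is complex symmetric. Note that this direction does not use $\lambda\neq0$.

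For the converse, suppose $\lambda T$ is complex symmetric. I would apply the direction just proved to the operator $\lambda T$ and the scalar $\lambda^{-1}$, which exists because $\lambda\ne0$. This gives that $\lambda^{-1}(\lambda T)=T$ is complex symmetric, again with the same conjugation.

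There is no real obstacle here. The only step where a mistake could occur is tracking the conjugate scalar. The factor $\overline{\lambda}$ coming from the adjoint is turned back into $\lambda$ by the anti-linearity of $C$, which is exactly why no adjustment of the conjugation is needed.
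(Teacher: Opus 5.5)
Your proof is correct: keeping the same conjugation $C$ works, because the factor $\overline{\lambda}$ in $(\lambda T)^*$ becomes $\lambda$ again when it passes through the anti-linear $C$. The paper states this lemma without proof, and your argument (the direct computation, with $\lambda^{-1}$ for the converse) is the standard verification it leaves implicit.
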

In \eqref{eq:1-form},
\[
  W_{\psi_1,\varphi_1}
  =e^\delta W_{\exp(\beta z^2+\gamma z),\varphi_1}.
\]
By Lemma~\ref{nozero}, \(W_{\psi_1,\varphi_1}\) is complex symmetric if and
only if \(W_{\exp(\beta z^2+\gamma z),\varphi_1}\) is complex symmetric.

It follows from \cite[Section 4.1]{GarciaPutinar2006} that every normal operator is complex symmetric.
For reference, we recall the characterization of normal weighted composition operators
\(W_{\psi,\varphi}\) from \cite[Theorem 3.3]{Le2014}.
\begin{proposition}\label{normal}
Let $\psi$ and $\varphi$ be entire functions with $\psi\not\equiv 0.$
Then $W_{\psi,\varphi}$ is a normal bounded operator on $\mathcal F^2$ if and only if
one of the following two cases occurs.

\begin{enumerate}
\item $\varphi(z)=az+d$, where $|a|=1$, and
$\psi(z)=\eta \exp\{-a\overline d\,z\},$ for some $\eta\in\mathbb C\setminus\{0\};$
\item $\varphi(z)=az+d$, where $|a|<1$, and
$\psi(z)
=\eta
\exp\left\{
\frac{1-a}{1-\overline a}\,\overline d\,z
\right\},$ for some $\eta\in\mathbb C\setminus\{0\}.$
\end{enumerate}
\end{proposition}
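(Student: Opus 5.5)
We prove Proposition~\ref{normal} by computing $W_{\psi,\varphi}^*$ and $W_{\psi,\varphi}$ on reproducing kernels and comparing $\|W_{\psi,\varphi}K_w\|$ with $\|W_{\psi,\varphi}^*K_w\|$ for every $w\in\C$. Since the kernels span a dense subspace, normality is equivalent to $W W^* K_w = W^* W K_w$ for all $w$; comparing norms is a convenient necessary condition.

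\textbf{Step 1: reduce to explicit symbols.} Normal operators are bounded, so Proposition~\ref{prop:boundedness} gives $\varphi(z)=az+d$ with $|a|\le1$. If $a=0$, the operator is the rank-one operator $\psi\otimes K_d$ of Lemma~\ref{rank1}. A rank-one operator $x\otimes y$ is normal only if $x$ and $y$ are parallel. This forces $\psi=\eta K_d$, that is, $\psi(z)=\eta e^{\overline d z}$, which is case~(2) with $a=0$. If $|a|=1$, Proposition~\ref{prop:boundedness} already forces $\psi=\eta e^{-a\overline d z}$. In that case $\eta^{-1}W_{\psi,\varphi}$ is the standard unitary Weyl-type operator; a direct computation of $\|W f\|$ by the change of variables $w=az+d$ in the Gaussian integral shows it is a multiple of an isometry onto $\F$. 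Hence it is normal, which gives case~(1).

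\textbf{Step 2: the case $0<|a|<1$.} A normal operator with nonconstant $\varphi$ is complex symmetric, so Lemma~\ref{lem:weights} and Proposition~\ref{prop:izuchi-cyclic} give $\psi=\exp(\beta z^2+\gamma z+\delta)$ with $|\beta|<\tfrac12$. Now use \eqref{eq:adjoint-kernel}:
\[
W^*W K_w = W^*\bigl(\psi\, e^{\overline w(a\,\cdot+d)}\bigr), \qquad W W^* K_w=\overline{\psi(w)}\,\psi\, K_{\varphi(w)}\circ\varphi .
\]
We compare $\|W K_w\|^2$ with $\|W^*K_w\|^2=|\psi(w)|^2 e^{|aw+d|^2}$. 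The first norm is the norm of a Gaussian $\exp(\beta z^2+(\gamma+a\overline w)z)$, computed by the standard formula
\[
\|e^{\beta z^2+\mu z}\|^2=(1-4|\beta|^2)^{-1/2}\exp\Bigl(\frac{|\mu|^2+\beta\overline\mu^2+\overline\beta\mu^2}{1-4|\beta|^2}\Bigr).
\]
Taking logarithms, both sides are real-analytic in $w,\overline w$. Equate the coefficients of $w^2$ and $\overline w^2$, of $|w|^2$, of $w$, and the constant term.

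\textbf{Step 3: the main obstacle.} The hardest step is extracting $\beta=0$ from the quadratic terms. The right side has $\log|\psi(w)|^2$ contributing $2\operatorname{Re}(\beta w^2)$, while the left contributes $2\operatorname{Re}(\overline\beta\, \overline{a}^{\,2} w^2)$-type terms divided by $1-4|\beta|^2$, together with the $|w|^2$ coefficient $|a|^2/(1-4|\beta|^2)$ against $|a|^2$. This last comparison alone forces $\beta=0$ when $a\ne0$. Once $\beta=0$, the linear terms give $\operatorname{Re}\bigl((\gamma-\overline d\,\cdot\text{stuff})w\bigr)$ identities, namely $\overline\gamma a + \ldots = \gamma+ a\overline d$. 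Combining with the constant terms gives $\gamma(1-\overline a)=(1-a)\overline d$, i.e.\ the stated formula for $\psi$.

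\textbf{Step 4: sufficiency.} For sufficiency in case~(2), translate the fixed point $p=d/(1-a)$ to the origin by the Weyl unitary $W_{k_p,\,z-p}$. A computation shows the weight becomes constant, so the operator is unitarily equivalent to $\eta' C_{az}$. This operator is diagonal in the monomial basis with eigenvalues $a^n$, hence normal. The bookkeeping of the Weyl conjugation is routine but must be checked to produce exactly the exponent $\frac{1-a}{1-\overline a}\overline d$.
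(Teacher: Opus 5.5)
Your proof is correct. The paper gives no argument to compare it with: Proposition~\ref{normal} is quoted from Le \cite[Theorem~3.3]{Le2014} without proof, so you are supplying a self-contained replacement for a citation.

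A few details should be tidied.
In Step~3, the $|w|^2$ comparison $|a|^2/(1-4|\beta|^2)=|a|^2$ does force $\beta=0$. However, the linear identity should read $\overline d+\overline a\gamma=\gamma+a\overline d$. As written, $\overline\gamma a+\dots$ uses the $\overline w$-coefficient without conjugating it. The corrected identity is already $\gamma(1-\overline a)=(1-a)\overline d$, so the constant term $|\gamma|=|d|$ is redundant.
The bookkeeping you defer in Step~4 is exactly Lemma~\ref{lem:reduction} with $\beta=0$. There $v=\gamma+(a-1)\overline p$, which vanishes precisely for $\gamma=\frac{1-a}{1-\overline a}\,\overline d$, and the same computation works for $a=0$.
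In Step~1, $\eta^{-1}W_{\psi,\varphi}$ is $e^{|d|^2/2}$ times a unitary rather than unitary itself. Your phrase ``multiple of an isometry onto'' already says this.

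On the route: you pass through two further imported results and the Gaussian norm formula. These are Lemma~\ref{lem:weights} (via normal $\Rightarrow$ complex symmetric) and Proposition~\ref{prop:izuchi-cyclic}. The payoff is a transparent mechanism: a quadratic term in the weight changes the growth rate of $\|WK_w\|$ relative to $\|W^*K_w\|$.

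All of this can be bypassed. Zero-freeness is immediate from $\ker W=\ker W^*$. In fact, necessity for every $a\ne1$ follows from the fixed point $p=d/(1-a)$. By \eqref{eq:adjoint-kernel}, $W^*K_p=\overline{\psi(p)}K_p$. Normality of $W-\psi(p)I$ then gives $\|(W-\psi(p))K_p\|=\|(W^*-\overline{\psi(p)})K_p\|=0$, i.e.
\[
\psi(z)=\psi(p)\,e^{-\overline p d}\exp\bigl((1-a)\overline p\,z\bigr),
\qquad
(1-a)\overline p=\frac{1-a}{1-\overline a}\,\overline d ,
\]
where $\psi(p)\ne0$ since otherwise $\psi\equiv0$.

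This settles $a=0$ and $0<|a|<1$ simultaneously. It also agrees with case~(1) when $|a|=1$, $a\ne1$, because then $\frac{1-a}{1-\overline a}=-a$. What remains is the boundedness proposition for $|a|=1$ and your Step~4 for sufficiency.
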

By Proposition~\ref{prop:boundedness}, the case \(|a|=1\) is contained in
the normal case described in Proposition~\ref{normal}. Having settled the
rank-one case \(a=0\) and the unimodular case \(|a|=1\), it remains to consider
weighted composition operators
whose symbols have the form
\begin{equation}\label{eq:2-form}
\begin{gathered}
  \varphi_2(z)=az+d,\qquad 0<|a|<1,\\
  \psi_2(z)=\exp(\beta z^2+\gamma z),\qquad |\beta|<\frac12.
\end{gathered}
\end{equation}

\subsection{Step 3: Unitary reduction}

In this section, we use the following two unitary operators to transform
\eqref{eq:2-form} into a simpler form.

For \(p\in\C\), the Weyl unitary operator \cite[p.~76]{Zhu2012} is
\begin{equation*}
  (U_pf)(z)=k_p(z)f(z-p),\qquad f\in \F.
\end{equation*}
A direct calculation gives \(U_p^*=U_{-p}\).  For \(|\eta|=1\), let
\[
  R_\eta f(z)=f(\eta z), \qquad f\in \F .
\]
Then \(R_\eta\) is unitary and \(R_\eta^*=R_{\overline\eta}\).

\begin{lemma}\label{lem:reduction}
Let \(\psi_2\) and \(\varphi_2\) be as in \eqref{eq:2-form}. Set
\(p=d/(1-a)\). If $\beta=0$, take $\eta=1$. If $\beta\neq0$, choose a unimodular
constant $\eta$ such that
\begin{align}\label{u}
\eta^{2}\frac{\beta}{1-a^{2}}
=
\left|\frac{\beta}{1-a^{2}}\right|.
\end{align}
Then
\begin{enumerate}
  \item[\textup{(i)}] \[
R_{\eta}U_{-p}W_{\psi_2,\varphi_2}U_pR_{\overline{\eta}}
=
\psi_2(p)W_{\psi,\varphi},
\]
where
\begin{equation}\label{eq:translated-weight}
\begin{gathered}
\varphi(z)=az,\qquad
\psi(z)=\exp\!\left(uz^2+vz\right),\\
u=\beta\eta^2,\qquad
v=\eta\left(2\beta p+\gamma+(a-1)\overline p\right).
\end{gathered}
\end{equation}
\item [\textup{(ii)}] $W_{\psi_2,\varphi_2}$ is complex symmetric if and only if $W_{\psi,\varphi}$ is complex symmetric.
\end{enumerate}

\end{lemma}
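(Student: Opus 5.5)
Part (i) is a direct computation on an arbitrary $f\in\F$, done in two stages: first conjugate by the Weyl operators, then by the rotations. For the first stage put $g=U_pf$, so $g(z)=k_p(z)f(z-p)$. Then
\[
(W_{\psi_2,\varphi_2}U_pf)(z)=\psi_2(z)\,k_p(az+d)\,f(az+d-p),
\]
and applying $U_{-p}$ gives
\[
(U_{-p}W_{\psi_2,\varphi_2}U_pf)(z)=k_{-p}(z)\,\psi_2(z+p)\,k_p\bigl(a(z+p)+d\bigr)\,f\bigl(a(z+p)+d-p\bigr).
\]
The choice $p=d/(1-a)$ means $p$ is the fixed point of $\varphi_2$, so $ap+d-p=0$ and the argument of $f$ is simply $az$. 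Next I expand each factor:
\begin{itemize}
\item $k_{-p}(z)=e^{-z\overline p-|p|^2/2}$;
\item $\psi_2(z+p)=\psi_2(p)\exp\bigl(\beta z^2+(2\beta p+\gamma)z\bigr)$;
\item $k_p(az+p)=e^{(az+p)\overline p-|p|^2/2}=e^{a\overline p z+|p|^2/2}$.
\end{itemize}
The constants $e^{\pm|p|^2/2}$ cancel. Multiplying the rest gives $\psi_2(p)\exp\bigl(\beta z^2+(2\beta p+\gamma+(a-1)\overline p)z\bigr)f(az)$.

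For the second stage I conjugate by $R_\eta$. Write $T$ for the operator $f\mapsto \tilde\psi(z)f(az)$. Then $R_\eta T R_{\overline\eta}f(z)=\tilde\psi(\eta z)\,f(\overline\eta a\eta z)=\tilde\psi(\eta z)f(az)$, so the composition symbol is unchanged and the weight becomes $\tilde\psi(\eta z)$. Substituting $\eta z$ into the exponent yields $u=\beta\eta^2$ and $v=\eta(2\beta p+\gamma+(a-1)\overline p)$, which is \eqref{eq:translated-weight}. The condition \eqref{u} is not needed for (i); it only fixes the normalization $c=u/(1-a^2)\ge0$ used later in \eqref{eq:in-reduced-form}. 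The only place needing care is keeping the constants $e^{\pm|p|^2/2}$ and the $\overline p$ terms straight, and the whole computation is routine.

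For (ii), set $V=R_\eta U_{-p}$. This $V$ is unitary, and by $U_p^*=U_{-p}$ and $R_\eta^*=R_{\overline\eta}$ we have $V^*=U_pR_{\overline\eta}$, so (i) reads $VW_{\psi_2,\varphi_2}V^*=\psi_2(p)W_{\psi,\varphi}$. Also $\psi_2(p)\neq0$, because $\psi_2$ is an exponential. Complex symmetry is preserved under unitary equivalence: if $T=CT^*C$, then $C'=VCV^*$ is again a conjugation, since it is anti-linear, $C'^2=I$, and it is isometric in the required sense; and $VTV^*=C'(VTV^*)^*C'$. Combining this with Lemma~\ref{nozero} to remove the nonzero factor $\psi_2(p)$ gives (ii).
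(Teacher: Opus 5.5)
Your proposal is correct and follows the paper's proof: the same direct computation (you split it into a Weyl-conjugation stage and a rotation stage, using $ap+d=p$ and $|\eta|=1$), followed by Lemma~\ref{nozero} to remove the nonzero factor $\psi_2(p)$. Your explicit check that $VCV^*$ is again a conjugation fills in the unitary-invariance step, which the paper only asserts at the start of Section~2.
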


\begin{proof}
Let \(f\in\F\) and \(z\in\mathbb C\). Using
\(U_p^*=U_{-p}\) and \(R_\eta^*=R_{\overline{\eta}}\), we obtain
\begin{align*}
&\bigl(R_\eta U_{-p}W_{\psi_2,\varphi_2}
       U_pR_{\overline{\eta}}f\bigr)(z)\\
&\quad=
k_{-p}(\eta z)\psi_2(\eta z+p)
k_p\!\left(\varphi_2(\eta z+p)\right)
f\!\left(
\overline{\eta}\bigl(\varphi_2(\eta z+p)-p\bigr)
\right).
\end{align*}
Since \(\varphi_2(z)=az+d\) and \(p=d/(1-a)\), we have
\(ap+d=p\), and hence
\[
\varphi_2(\eta z+p)=a\eta z+p,
\qquad
\overline{\eta}\bigl(\varphi_2(\eta z+p)-p\bigr)=az.
\]
Moreover, since
$
k_p(w)=\exp(\overline p\,w-\frac{|p|^2}{2}),
$ and $
\psi_2(w)=\exp\!\left(\beta w^2+\gamma w\right),
$
a direct calculation gives
\begin{align*}
&k_{-p}(\eta z)\psi_2(\eta z+p)k_p(a\eta z+p)\\
&\quad=
\exp\!\left\{
\beta\eta^2z^2
+\eta\left(2\beta p+\gamma+(a-1)\overline p\right)z
+\beta p^2+\gamma p
\right\}\\
&\quad=
\psi_2(p)\exp\!\left(uz^2+vz\right).
\end{align*}
Therefore,
\[
\bigl(R_\eta U_{-p}W_{\psi_2,\varphi_2}
U_pR_{\overline{\eta}}f\bigr)(z)
=
\psi_2(p)\psi(z)f(\varphi(z)).
\]

Since \(\psi_2(p)\ne0\), Lemma~\ref{nozero} shows that multiplication by
the scalar \(\psi_2(p)\) does not affect complex symmetry. This proves
\textup{(ii)}.

\end{proof}

\begin{lemma}\label{lem:reduced-c-range}
Let \(W_{\psi,\varphi}\) be the bounded complex symmetric weighted
composition operator obtained in Lemma~\ref{lem:reduction}, where
\[
\varphi(z)=az,\qquad
\psi(z)=\exp(uz^2+vz),\qquad
0<|a|<1.
\]
Set
\[
b=\frac{v}{1-a},
\qquad
c=\frac{u}{1-a^2}.
\]
Then
\[
q(z)=\exp(cz^2+bz)\in\F,
\qquad
0\le c<\frac12.
\]
\end{lemma}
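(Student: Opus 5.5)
We must show two things: that $c\ge0$ and $c<\tfrac12$, and then that $q\in\F$. The nonnegativity of $c$ is built into the reduction. If $\beta=0$ then $u=0$, so $c=0$. If $\beta\ne0$, then $u=\beta\eta^2$, and \eqref{u} gives
\[
c=\frac{u}{1-a^2}=\eta^2\frac{\beta}{1-a^2}=\Bigl|\frac{\beta}{1-a^2}\Bigr|\ge0 .
\]

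The substantive part is the bound $c<\tfrac12$. Once it is known, $q(z)=\exp(cz^2+bz)$ lies in $\F$ by Proposition~\ref{prop:izuchi-cyclic}, (ii)$\Rightarrow$(i), since $|c|<\tfrac12$.

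To prove $c<\tfrac12$ I would use the eigenvector $q$ itself. Set $q(z)=\exp(cz^2+bz)$. A direct computation gives
\[
\psi(z)\,q(az)=\exp\bigl((u+ca^2)z^2+(v+ab)z\bigr)=\exp(cz^2+bz)=q(z),
\]
because $u=c(1-a^2)$ and $v=b(1-a)$. So formally $W_{\psi,\varphi}q=q$. The operator is bounded, but we do not yet know that $q\in\F$, so this identity cannot be used directly.

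Instead, iterate the identity $\psi(z)=q(z)/q(az)$. This gives the product formula
\[
\psi_n(z):=\prod_{k=0}^{n-1}\psi(a^kz)=\frac{q(z)}{q(a^nz)} .
\]
The weight $\psi_n$ is exactly the weight of the power $W_{\psi,\varphi}^n=W_{\psi_n,\varphi^{[n]}}$, where $\varphi^{[n]}(z)=a^nz$. Each power is bounded, so Proposition~\ref{prop:boundedness} gives $\psi_n\in\F$, that is,
\[
\frac{q(z)}{q(a^nz)}=\exp\bigl(c(1-a^{2n})z^2+b(1-a^n)z\bigr)\in\F .
\]
Proposition~\ref{prop:izuchi-cyclic} then yields $|c|\,|1-a^{2n}|<\tfrac12$ for every $n$. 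Letting $n\to\infty$ gives only $|c|\le\tfrac12$, so strictness still has to be secured.

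For strictness, I would use the sup condition of Proposition~\ref{prop:boundedness} applied to $W^n$:
\[
\sup_z \bigl|q(z)/q(a^nz)\bigr|^2 e^{(|a|^{2n}-1)|z|^2}<\infty .
\]
Take $z=re^{i\theta}$ with $\theta$ chosen so that the quadratic term is real and positive. Then the exponent is roughly
\[
\bigl(2c(1-\operatorname{Re}a^{2n})-(1-|a|^{2n})\bigr)r^2+O(r).
\]
This is not obviously decisive. Suppose $c=\tfrac12$ exactly. Then the coefficient of $r^2$ is $|a|^{2n}-\operatorname{Re}a^{2n}$, which is $\ge0$. It can be strictly positive for a suitable $n$ unless $a^{2n}$ is real positive for all $n$, i.e.\ unless $a$ is real.

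So the remaining case is $c=\tfrac12$ with $a$ real, and here I would fall back on the original assumption $|\beta|<\tfrac12$:
\[
|u|=|\beta|<\tfrac12, \qquad c=\frac{|u|}{|1-a^2|}.
\]
For real $a$ with $0<|a|<1$, we have $1-a^2\in(0,1)$, so $c>|u|$, and $c\ge\tfrac12$ remains possible. At this point I would exclude it using Proposition~\ref{prop:explicit-boundedness}: boundedness requires $2|\beta|\le 1-|a|^2$. Combined with $|1-a^2|\ge 1-|a|^2$, this gives $c\le\tfrac12$ in general. Equality $c=\tfrac12$ forces $2|\beta|=1-|a|^2=|1-a^2|$, which happens exactly when $a$ is real.

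This equality case is the main obstacle. I would try to rule it out via the extra linear condition in Proposition~\ref{prop:explicit-boundedness}(ii), together with the sup condition along the critical direction; in that direction the $r^2$ terms cancel and the linear term in $r$ must not grow. If that does not resolve it, I would invoke complex symmetry: Lemma~\ref{lem:weights}, and the relation $W^*$ similar to $W$ under an antiunitary map, to transfer the eigenvector $q$ of $W$ into the space.
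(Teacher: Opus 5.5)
There is a genuine gap: your argument never excludes the case \(a\in\mathbb R\), \(c=\tfrac12\), and the boundedness route you propose for it cannot succeed. Up to that point your reasoning is fine. You get \(c\ge0\) from the choice of \(\eta\), and \(2|u|\le 1-|a|^2\) together with \(|u|=c\,|1-a^2|\) gives \(c\le\tfrac12\), strictly when \(a\notin\mathbb R\).

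The leftover case, however, is realized by bounded operators of exactly the form in Lemma~\ref{lem:reduction}. Take \(a=\tfrac12\), \(u=\tfrac38\), \(v=\tfrac i2\), so that \(c=\tfrac12\) and \(b=i\). For \(z=x+iy\) one gets \(|\psi(z)|^2e^{|az|^2-|z|^2}=e^{-\frac32y^2-y}\), which is bounded. The linear condition of Proposition~\ref{prop:explicit-boundedness}(ii), \(|u|v+u\overline v=0\), also holds. So neither that condition nor the sup condition along the critical direction can rule this case out; only complex symmetry can.

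Your fallback for using complex symmetry is too vague to count as a proof, and it points the wrong way. The weight \(\psi\) is already zero-free, so Lemma~\ref{lem:weights} adds nothing. And \(q\) cannot be ``transferred into the space'', because it is not yet known to lie in \(\F\).

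The missing idea is to begin with an eigenvector of the adjoint that is known to lie in \(\F\). Since \(\psi(0)=1\) and \(\varphi(0)=0\), \eqref{eq:adjoint-kernel} gives \(W_{\psi,\varphi}^*K_0=K_0\). If \(W_{\psi,\varphi}=CW_{\psi,\varphi}^*C\), then \(f=CK_0\) is a nonzero element of \(\F\) with \(\psi(z)f(az)=f(z)\).

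Your identity \(\psi(z)q(az)=q(z)\) then shows that the entire function \(h=f/q\) satisfies \(h(az)=h(z)\). Hence \(h(z)=h(a^nz)\to h(0)\), so \(f\) is a nonzero multiple of \(q\) and \(q\in\F\). Proposition~\ref{prop:izuchi-cyclic} then yields \(c<\tfrac12\) for all \(a\) at once. This is the paper's argument, and it makes your analysis of powers and of the sup condition unnecessary.
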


\begin{proof}
Since \(\psi(0)=1\) and \(\varphi(0)=0\),
\eqref{eq:adjoint-kernel} yields
$W_{\psi,\varphi}^{*}K_0=K_0.$
Let \(C\) be a conjugation such that \(W_{\psi,\varphi}\) is
\(C\)-symmetric. Then
$
W_{\psi,\varphi}(CK_0)
=
CW_{\psi,\varphi}^{*}K_0
=
CK_0,
$
and hence \(CK_0\in\ker(W_{\psi,\varphi}-I)\).
Set \(f=CK_0\). Then
\[
\psi(z)f(az)=f(z).
\]
On the other hand, the definitions of \(b\) and \(c\) give
\[
\psi(z)q(az)
=
\exp\!\left((u+a^2c)z^2+(v+ab)z\right)
=
q(z).
\]
Since \(q\) is zero-free, \(h=f/q\) is entire and satisfies
\(h(az)=h(z)\). Thus, for every \(z\in\mathbb C\),
\[
h(z)=h(a^nz)\longrightarrow h(0)
\qquad (n\to\infty),
\]
so \(h\) is constant. Consequently, \(CK_0=\lambda q\) for some
\(\lambda\ne0\), and therefore \(q\in\F\).

By \eqref{eq:translated-weight},
and \eqref{u},
$
c
=
\frac{u}{1-a^2}
=
\eta^2\frac{\beta}{1-a^2}
=
\left|\frac{\beta}{1-a^2}\right|
\ge0.$
Proposition~\ref{prop:izuchi-cyclic} implies \(|c|<1/2\), and hence
\(0\le c<1/2\).
\end{proof}

\subsection{Standing notation}\label{subsec:standing-notation}
Throughout Sections~3 and~4, let \(W_{\psi,\varphi}\) be a
bounded weighted composition operator on \(\mathcal F^2\).
We use the following standing notation and conventions:
\begin{equation}
\begin{gathered}
    \varphi(z)=az,\qquad 0<|a|<1,\\
    \psi(z)=\exp(uz^2+vz),\qquad |u|<\frac12,\\
    b=\frac{v}{1-a}=b_1+ib_2,\qquad
    c=\frac{u}{1-a^2}\in\left[0,\frac12\right),
\end{gathered}
\end{equation}
where
\(b_1=\operatorname{Re}b\) and \(b_2=\operatorname{Im}b\).
Define
\begin{equation*}
  q(z)=\exp(cz^2+bz)\in\F,
  \qquad
  q_n(z)=z^nq(z),
  \qquad n\geq0,
\end{equation*}
and introduce the generating function
\begin{equation*}
  Q(t,z)
  =\e^{tz}q(z)
  =\sum_{n=0}^{\infty}\frac{q_n(z)}{n!}t^n.
\end{equation*}

The polynomials \(h_n\) are defined by
\begin{equation}\label{eq:H-def}
  H(t,z)
  =\exp\{t(z-\overline b)-ct^2\}
  =\sum_{n=0}^{\infty}\frac{h_n(z)}{n!}t^n,
\end{equation}
or equivalently by
\begin{equation}\label{eq:hn-formula}
  h_n(z)
  =\sum_{j=0}^{\lfloor n/2\rfloor}
  \frac{n!}{j!(n-2j)!}(-c)^j
  (z-\overline b)^{n-2j}.
\end{equation}

We further write
\begin{equation}\label{ABD}
\begin{gathered}
  A=1-2c,\qquad B=1+2c,\qquad D=AB=1-4c^2,\\
  M=\exp\left(\frac{b_1^2}{A}+\frac{b_2^2}{B}\right),
\end{gathered}
\end{equation}
and
\begin{equation}\label{eq:m-algebraic}
  L=\frac{b_1}{A}-i\frac{b_2}{B}
   =\frac{\overline b+2cb}{D}.
\end{equation}
\section{Eigenvalues, eigenspaces, and Gram matrices}

Throughout this section, the standing notation and conventions introduced in
Subsection~\ref{subsec:standing-notation} are in force.
We construct two
families of eigenvectors, \(\{q_n\}_{n\geq0}\) for \(W_{\psi,\varphi}\) and
\(\{h_n\}_{n\geq0}\) for \(W_{\psi,\varphi}^*\), prove their completeness,
and compute the Gram matrices needed in the proof of
the main theorem.

When \(c>0\), the polynomials \(h_n\) in \eqref{eq:hn-formula} may also be
identified with translated and rescaled Hermite polynomials $\mathcal H_n$. More precisely,
\[
  h_n(z)
  =
  c^{n/2}
  \mathcal H_n\left(
    \frac{z-\overline b}{2\sqrt c}
  \right),
  \qquad n\geq0,
\]
where
\[
  \mathcal H_n(x)
  =
  n!
  \sum_{j=0}^{\lfloor n/2\rfloor}
  \frac{(-1)^j(2x)^{n-2j}}
       {j!(n-2j)!},
\]
equivalently characterized by
\[
  \exp(2x\tau-\tau^2)
  =
  \sum_{n=0}^{\infty}
  \mathcal H_n(x)\frac{\tau^n}{n!};
\]
see \cite[Chapter 11]{Rainville1960}.
When \(c=0\), one simply has
\[
  h_n(z)=(z-\overline b)^n.
\]
\begin{lemma}\label{lem:norm-convergence-generating}
For every \(t\in\C\), the expansions
\[
  Q(t,z)
  =\e^{tz}q(z)
  =\sum_{n=0}^{\infty}\frac{q_n(z)}{n!}t^n
\]
and
\[
  H(t,z)
  =\exp\{t(z-\overline b)-ct^2\}
  =\sum_{n=0}^{\infty}\frac{h_n(z)}{n!}t^n
\]
converge in the norm of \(\F\). In fact, both series converge locally
uniformly in \(t\) with respect to the \(\F\)-norm.
\end{lemma}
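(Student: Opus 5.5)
The plan is to show that each series has norm-summable terms, with bounds uniform on compact sets of $t$, and that the norm limit coincides with the pointwise sum. The key tool is the explicit formula for the norm of a monomial times a Gaussian-type weight. For the first series we have $q_n(z)=z^nq(z)$. Our aim is a bound of the form
\[
\|q_n\|\le C_\varepsilon\,R_\varepsilon^{\,n}\sqrt{n!}
\]
for some constants, since then $\sum_n \|q_n\|\,|t|^n/n!\le C\sum_n (R|t|)^n/\sqrt{n!}<\infty$ locally uniformly in $t$.

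To get this bound, write $|q(z)|^2=\exp(2\operatorname{Re}(cz^2+bz))\le \exp(2c|z|^2+2|b||z|)$. Since $2c<1$, pick $\varepsilon>0$ with $2c+\varepsilon<1$, set $\sigma=1-2c-\varepsilon>0$, and absorb $2|b||z|\le \varepsilon|z|^2+|b|^2/\varepsilon$. Then
\[
\|q_n\|^2\le \frac{e^{|b|^2/\varepsilon}}{\pi}\int_{\C}|z|^{2n}e^{-\sigma|z|^2}\,dA(z)=e^{|b|^2/\varepsilon}\,\frac{n!}{\sigma^{n+1}},
\]
which gives the desired bound with $R=\sigma^{-1/2}$.

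For the second series we estimate the polynomials $h_n$ through the formula \eqref{eq:hn-formula}. The triangle inequality gives $\|h_n\|\le\sum_j \frac{n!}{j!(n-2j)!}c^j\|(z-\overline b)^{n-2j}\|$. Translation by $\overline b$ is handled either by the Weyl operator, using $(z-\overline b)^m=k_{\overline b}^{-1}\cdot U_{\overline b}z^m$ up to constants, or more simply by the binomial expansion $\|(z-\overline b)^m\|\le\sum_k\binom mk|b|^{m-k}\sqrt{k!}\le (1+|b|)^m\sqrt{m!}$. Hence
\[
\frac{\|h_n\|}{n!}|t|^n\le \sum_{2j+m=n}\frac{c^j|t|^{2j}}{j!}\cdot\frac{((1+|b|)|t|)^m}{\sqrt{m!}},
\]
and summing over $n$ gives the product $e^{c|t|^2}\sum_m((1+|b|)|t|)^m/\sqrt{m!}<\infty$, locally uniformly in $t$.

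Finally, the norm limits are identified with $Q(t,\cdot)$ and $H(t,\cdot)$ as follows. Norm convergence in $\F$ implies pointwise convergence, because point evaluations are bounded by the reproducing kernel. The pointwise sums are exactly $e^{tz}q(z)$ and $\exp\{t(z-\overline b)-ct^2\}$ by the Taylor expansions in $t$. This also shows $Q(t,\cdot),H(t,\cdot)\in\F$. The main obstacle is the Gaussian integral estimate for $\|q_n\|$, and this is exactly where the strict inequality $c<1/2$ is needed; the rest is routine bookkeeping.
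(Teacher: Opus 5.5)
Your proof is correct, but it takes a different route from the paper's. You prove absolute norm-summability, which is a Weierstrass $M$-test in $\F$. The moment formula $\frac1\pi\int_{\C}|z|^{2n}e^{-\sigma|z|^2}\,dA(z)=n!/\sigma^{n+1}$ gives $\|q_n\|\le C\sigma^{-n/2}\sqrt{n!}$, and the binomial bound gives $\|(z-\overline b)^m\|\le(1+|b|)^m\sqrt{m!}$. You then identify the norm limits with $Q(t,\cdot)$ and $H(t,\cdot)$ through bounded point evaluations.

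The paper never estimates $\|q_n\|$ or $\|h_n\|$. Instead it bounds the tails pointwise, uniformly for $|t|\le R$:
$|Q(t,z)-Q_N(t,z)|\le|q(z)|\sum_{n>N}(R|z|)^n/n!$ and $|H(t,z)-H_N(t,z)|\le\sum_{n>N}|h_n(z)|R^n/n!$.
It dominates these by the integrable functions $\exp\{(2c-1)|z|^2+2(|b|+R)|z|\}$ and $\exp\{2cR^2+2R|b|+2R|z|-|z|^2\}$, and concludes by dominated convergence.

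The paper's route is shorter. It needs no moment integrals and no $\varepsilon$-absorption of the linear term. Because the tail is measured directly against $Q(t,\cdot)$ and $H(t,\cdot)$, membership in $\F$ and identification of the limit come for free.

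Your route costs a little more but yields more. You get explicit growth bounds on $\|q_n\|$ and $\|h_n\|$, and absolute convergence of $\sum_n\|q_n\|\,|t|^n/n!$ and $\sum_n\|h_n\|\,|t|^n/n!$. Hence $t\mapsto Q(t,\cdot)$ and $t\mapsto H(t,\cdot)$ are entire $\F$-valued maps. This makes the later termwise manipulations transparently justified: applying $W_{\psi,\varphi}^*$ or $J$ term by term, and reading off Gram entries from absolutely convergent double power series.

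In both arguments the strict inequality $c<1/2$ enters at the same point: integrability of $|q|^2e^{-|z|^2}$ multiplied by an extra exponential factor.
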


\begin{proof}
Fix \(R>0\), and set
\[
  Q_N(t,z)=\sum_{n=0}^{N}\frac{q_n(z)}{n!}t^n,
  \qquad
  H_N(t,z)=\sum_{n=0}^{N}\frac{h_n(z)}{n!}t^n.
\]

Since \(q_n(z)=z^nq(z)\), for \(|t|\leq R\),
\[
  |Q(t,z)-Q_N(t,z)|
  \leq |q(z)|\rho_N(z),
\]
where
\[
  \rho_N(z)
  =
  \sum_{n=N+1}^{\infty}\frac{(R|z|)^n}{n!}.
\]
For every fixed \(z\in\C\),
$\rho_N(z)\longrightarrow0\ \text{as }N\to\infty,$
while \(\rho_N(z)\leq\e^{R|z|}\). Hence
\[
\begin{aligned}
  \sup_{|t|\leq R}
  |Q(t,z)-Q_N(t,z)|^2\e^{-|z|^2}
  &\leq
  |q(z)|^2\e^{2R|z|}\e^{-|z|^2}\\
  &\leq
  \exp\left\{
    (2c-1)|z|^2+2(|b|+R)|z|
  \right\}.
\end{aligned}
\]
The last function is integrable over \(\C\), since \(c<1/2\).
Therefore, by the dominated convergence theorem,
\[
\begin{aligned}
  \sup_{|t|\leq R}
  \|Q(t,\cdot)-Q_N(t,\cdot)\|_{\F}^{2}
  \leq
  \frac{1}{\pi}\int_{\C}
  |q(z)|^2\rho_N(z)^2\e^{-|z|^2}\,\dd A(z)
  \longrightarrow0,\quad \text{as }N\to\infty.
\end{aligned}
\]

Formula~\eqref{eq:hn-formula} gives
\[
\begin{aligned}
  \sum_{n=0}^{\infty}\frac{|h_n(z)|}{n!}R^n
  &\leq
  \sum_{j=0}^{\infty}\sum_{k=0}^{\infty}
  \frac{(cR^2)^j}{j!}
  \frac{(R|z-\overline b|)^k}{k!}
=
  \exp\left\{cR^2+R|z-\overline b|\right\}.
\end{aligned}
\]
Define
\[
  \sigma_N(z)
  =
  \sum_{n=N+1}^{\infty}\frac{|h_n(z)|}{n!}R^n.
\]
Then, for every fixed \(z\in\C\),
$\sigma_N(z)\longrightarrow0
\quad \text{as }N\to\infty,$
and, for \(|t|\leq R\),
\[
  |H(t,z)-H_N(t,z)|\leq\sigma_N(z).
\]
Moreover,
\[
\begin{aligned}
  \sigma_N(z)^2\e^{-|z|^2}
  &\leq
  \exp\left\{
    2cR^2+2R|z-\overline b|-|z|^2
  \right\}\\
  &\leq
  \exp\left\{
    2cR^2+2R|b|+2R|z|-|z|^2
  \right\},
\end{aligned}
\]
and the last function is integrable over \(\C\). Thus another
application of the dominated convergence theorem yields
\[
\begin{aligned}
  \sup_{|t|\leq R}
  \|H(t,\cdot)-H_N(t,\cdot)\|_{\F}^{2}
  &\leq
  \frac{1}{\pi}\int_{\C}
  \sigma_N(z)^2\e^{-|z|^2}\,\dd A(z)
  \longrightarrow0,\qquad\text{as }N\to\infty.
\end{aligned}
\]

Since \(R>0\) was arbitrary, both expansions converge, as
\(N\to\infty\), locally uniformly in \(t\) with respect to the
\(\F\)-norm.
\end{proof}
\begin{lemma}\label{dense}
Both \(\operatorname{span}\{h_n:n\geq0\}\) and
\(\operatorname{span}\{q_n:n\geq0\}\) are dense in \(\F\).
\end{lemma}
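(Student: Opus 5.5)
Both density statements reduce to the fact that the only vector orthogonal to every member of the family is zero. The generating-function convergence of Lemma~\ref{lem:norm-convergence-generating} lets us pass from the families $\{q_n\}$ and $\{h_n\}$ to the single functions $Q(t,\cdot)$ and $H(t,\cdot)$.

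\emph{The family $\{q_n\}$.} Here there are two possible routes. The first is direct: since $q$ is zero-free and belongs to $\F$, Proposition~\ref{prop:izuchi-cyclic} says $q$ is cyclic. Hence $\overline{\operatorname{span}}\{z^nq:n\ge0\}=\F$, which is precisely the density of $\operatorname{span}\{q_n\}$.

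The second route uses the generating function. Suppose $f\perp q_n$ for all $n$. Lemma~\ref{lem:norm-convergence-generating} lets us expand $\ip{Q(t,\cdot)}{f}=\sum_n \frac{t^n}{n!}\ip{q_n}{f}$, so $f\perp e^{tz}q$ for every $t\in\C$. This reduces again to the cyclicity of $q$. I will simply invoke Proposition~\ref{prop:izuchi-cyclic}.

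\emph{The family $\{h_n\}$.} Each $h_n$ is a polynomial of degree exactly $n$ with leading coefficient $1$; this can be read off from \eqref{eq:hn-formula}, where the $j=0$ term is $(z-\overline b)^n$. Inducting on $n$, each monomial $z^n$ then lies in $\operatorname{span}\{h_0,\dots,h_n\}$. So $\operatorname{span}\{h_n\}$ equals the space of all polynomials, which is dense in $\F$ because the monomials form an orthonormal basis (up to normalization).

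No step is a real obstacle. The only care needed is to confirm that $q\in\F$ with $c<1/2$, so that Proposition~\ref{prop:izuchi-cyclic} applies; this is part of the standing notation (Lemma~\ref{lem:reduced-c-range}). As an alternative argument for $\{h_n\}$: if $f\perp h_n$ for all $n$, then $\ip{f}{H(t,\cdot)}=0$ for all $t$. Writing $\overline{H(t,z)}$ in terms of kernels shows that $e^{-\bar c\,\bar t^{2}}\,\overline{e^{-t\overline b}}\,f(\bar t)$ vanishes identically, so $f=0$. This agrees with the polynomial argument.
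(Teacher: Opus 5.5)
Your proposal is correct and follows essentially the same approach as the paper. Density of \(\operatorname{span}\{h_n\}\) comes from the \(h_n\) being monic of degree \(n\), so they span all polynomials. Density of \(\operatorname{span}\{q_n\}\) comes from the cyclicity of the zero-free \(q\in\F\) via Proposition~\ref{prop:izuchi-cyclic}. Your additional generating-function argument for \(\{h_n\}\) via \(\langle f,H(t,\cdot)\rangle=e^{-c\bar t^{2}-\bar t b}f(\bar t)\) is valid but not needed.
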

\begin{proof}
Each \(h_n\) is a monic polynomial of degree \(n\); hence the \(h_n\)'s span
all polynomials and are complete in \(\F\).  The completeness of the
\(\{q_n\}\) follows from Proposition~\ref{prop:izuchi-cyclic}, since \(q\) is a cyclic vector in \(\F\).
\end{proof}
\begin{theorem}\label{lem:eigenvectors}
For every \(n\geq0\),
\begin{equation}\label{eq:eigenspace-W}
  \ker(W_{\psi,\varphi}-a^nI)=\operatorname{span}\{q_n\},
\end{equation}
and
\begin{equation}\label{eq:adjoint-eigenvectors}
\ker(W_{\psi,\varphi}^{*}-\bar a^{\,n}I)
=\operatorname{span}\{h_n\}.
\end{equation}
\end{theorem}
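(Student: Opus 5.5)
The plan is to treat the two identities separately. For $W_{\psi,\varphi}$ we verify directly that $q_n$ is an eigenvector and then prove uniqueness by a Taylor-coefficient argument. For $W_{\psi,\varphi}^*$ we show that the generating function $H(t,\cdot)$ is mapped to $H(\bar a t,\cdot)$, read off $W_{\psi,\varphi}^*h_n=\bar a^{\,n}h_n$ from the Taylor coefficients in $t$, and get uniqueness from the completeness of $\{q_m\}$ in Lemma~\ref{dense}.

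\emph{Eigenspaces of $W_{\psi,\varphi}$.} As in the proof of Lemma~\ref{lem:reduced-c-range}, the relations $u=c(1-a^2)$ and $v=b(1-a)$ give $\psi(z)q(az)=q(z)$. Hence
\[
W_{\psi,\varphi}q_n(z)=\psi(z)(az)^nq(az)=a^nq_n(z).
\]
Conversely, let $f\in\ker(W_{\psi,\varphi}-a^nI)$. Since $q$ is zero-free, $h=f/q$ is entire, and dividing the eigen-equation by $q(z)=\psi(z)q(az)$ gives $h(az)=a^nh(z)$. Comparing Taylor coefficients yields $h_k a^k=a^nh_k$ for every $k$. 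Because $0<|a|<1$, the moduli $|a|^k$ are pairwise distinct, so $h_k=0$ for $k\ne n$. Thus $h$ is a constant multiple of $z^n$, and $f\in\operatorname{span}\{q_n\}$.

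\emph{Eigenvectors of the adjoint.} Write $H(t,z)=e^{-\overline b t-ct^2}K_{\bar t}(z)$ and apply \eqref{eq:adjoint-kernel} with $w=\bar t$:
\[
W_{\psi,\varphi}^*H(t,\cdot)=\exp\bigl(-\overline b t-ct^2+\overline u t^2+\overline v t+\bar a t z\bigr).
\]
Since $c$ is real, $\overline u=c(1-\bar a^2)$, and also $\overline v=\overline b(1-\bar a)$. The exponent therefore simplifies to $\bar a t(z-\overline b)-c\bar a^2t^2$, that is,
\[
W_{\psi,\varphi}^*H(t,\cdot)=H(\bar a t,\cdot).
\]
By Lemma~\ref{lem:norm-convergence-generating}, both sides are $\F$-valued power series in $t$ that converge in norm. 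Since $W_{\psi,\varphi}^*$ is bounded, it may be applied term by term. Comparing coefficients of $t^n$, which is legitimate by uniqueness of power series expansions for vector-valued analytic functions, gives $W_{\psi,\varphi}^*h_n=\bar a^{\,n}h_n$. Each $h_n$ is nonzero because it is monic of degree $n$.

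\emph{Uniqueness for the adjoint.} This is the step that needs an idea rather than a computation. Let $g\in\ker(W_{\psi,\varphi}^*-\bar a^{\,n}I)$. For every $m$,
\[
a^m\ip{q_m}{g}=\ip{W_{\psi,\varphi}q_m}{g}=\ip{q_m}{W_{\psi,\varphi}^*g}=a^n\ip{q_m}{g},
\]
so $\ip{q_m}{g}=0$ whenever $m\ne n$. The same holds for $h_n$.
\begin{itemize}
\item If $\ip{q_n}{h_n}=0$, then $h_n$ would be orthogonal to the dense span of $\{q_m\}$ (Lemma~\ref{dense}), forcing $h_n=0$, which is impossible. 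So $\ip{q_n}{h_n}\ne0$.
\item Set $\lambda=\ip{q_n}{g}/\ip{q_n}{h_n}$. Then $g-\lambda h_n$ is orthogonal to every $q_m$, and density gives $g=\lambda h_n$.
\end{itemize}
This proves \eqref{eq:adjoint-eigenvectors}. The only delicate analytic point is justifying the coefficient comparison in the generating-function identity, and that is exactly what Lemma~\ref{lem:norm-convergence-generating} provides.
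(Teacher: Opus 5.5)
Your proposal is correct and follows the paper's proof essentially step for step: the identity $\psi(z)q(az)=q(z)$ with a Taylor-coefficient argument for $W_{\psi,\varphi}$, the generating-function identity $W_{\psi,\varphi}^*H(t,\cdot)=H(\bar a t,\cdot)$ justified termwise by Lemma~\ref{lem:norm-convergence-generating}, and uniqueness for the adjoint via orthogonality to $\{q_m:m\ne n\}$ together with completeness of $\{q_m\}$ (the paper phrases this last step as injectivity of $g\mapsto\ip{g}{q_n}$ on $\{q_m:m\ne n\}^{\perp}$, which is the same idea). One small slip: because the inner product is conjugate-linear in its second slot, you need $\lambda=\ip{g}{q_n}/\ip{h_n}{q_n}$, the complex conjugate of the scalar you wrote, to get $g-\lambda h_n\perp q_n$.
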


\begin{proof}
The identities \(u+c a^2=c\) and \(v+ba=b\) give
\[
 \bigl(W_{\psi,\varphi}q_n\bigr)(z)
 =\e^{uz^2+vz}(az)^n\e^{ca^2z^2+baz}
 =a^nq_n(z),\qquad n=0,1,2,\ldots.
\]
If \(W_{\psi,\varphi}g=a^ng\), then \(\omega =g/q\) is entire and
\[\omega (az)=\frac{\psi(z)g(az)}{\psi(z)q(az)}=\frac{ W_{\psi,\varphi}g(z)}{W_{\psi,\varphi}q(z)}=\frac{a^n g(z)}{q(z)}=a^n\omega (z).\]
Comparing the Taylor coefficients of \(\omega \), and using
\(0<|a|<1\), shows that \(\omega \) is a scalar multiple of \(z^n\).  This proves
\eqref{eq:eigenspace-W}.

 To prove
\eqref{eq:adjoint-eigenvectors}, first write
\[
  H(t,z)=\e^{-t\overline b-c t^2}K_{\bar{t}}(z).
\]
Equation \eqref{eq:adjoint-kernel} and the identities
\(\bar v=(1-\bar a)\bar b\) and
\(\bar u=(1-\bar a^2)c\) yield
\[
\begin{aligned}
 W_{\psi,\varphi}^*H(t,z)
 &=\e^{-t\overline b-c t^2}
   \e^{\overline u t^2+\overline v t}K_{a\overline t}(z)\\
 &=\exp\{\overline a t(z-\overline b)
          -c \bar a^2t^2\}\\
 &=H(\bar a t,z).
\end{aligned}
\]
On the other hand,
\[
    H(\bar a t,z)
    =
    \sum_{n=0}^{\infty}
    \frac{h_n(z)}{n!}(\bar a t)^n
    =
    \sum_{n=0}^{\infty}
    \frac{\bar a^{\,n}h_n(z)}{n!}t^n.
\]
By Lemma~\ref{lem:norm-convergence-generating}, we have
\[
    W_{\psi,\varphi}^{*}H(t,z)
    =
    \sum_{n=0}^{\infty}
    \frac{W_{\psi,\varphi}^{*}h_n(z)}{n!}t^n.
\]
Hence
$W_{\psi,\varphi}^{*}h_n=\bar a^{\,n}h_n,$
and 
$
\operatorname{span}\{h_n\}
\subset \ker\left(W_{\psi,\varphi}^{*}-\overline a^{\,n}I\right).
$
It remains to prove the reverse inclusion.

Let
$f\in \ker(W_{\psi,\varphi}^{*}-\bar a^{\,n}I).$
Then
$W_{\psi,\varphi}^{*}f=\overline a^{\,n}f.$
For every \(m\geq0\), since
$W_{\psi,\varphi}q_m=a^m q_m,$
we have
\[
\begin{aligned}
    a^m\langle q_m,f\rangle
    &=
    \langle W_{\psi,\varphi}q_m,f\rangle  \\
    &=
    \langle q_m,W_{\psi,\varphi}^{*}f\rangle  \\
    &=
    \langle q_m,\overline a^{\,n}f\rangle  \\
    &=
    a^n\langle q_m,f\rangle .
\end{aligned}
\]
Thus
$(a^m-a^n)\langle q_m,f\rangle=0.$
Since \(0<|a|<1\), we have \(a^m\neq a^n\) whenever \(m\neq n\). Hence
$\langle q_m,f\rangle=0,\ m\neq n.$
Therefore $f\in \{q_m:m\neq n\}^{\perp}.$

We claim that \(\{q_m:m\neq n\}^{\perp}\) is at most one-dimensional.
Set
$X_n:=\{q_m:m\neq n\}^{\perp},$
and define the linear functional
\[
  \Lambda_n:X_n\longrightarrow\mathbb C,
  \qquad
  \Lambda_n(g)=\langle g,q_n\rangle.
\]
If \(g\in\ker\Lambda_n\), then \(g\perp q_n\). Since \(g\in X_n\), we
also have \(g\perp q_m\) for every \(m\neq n\). Hence \(g\perp q_m\) for
all \(m\geq0\). The completeness of \(\{q_m:m\geq0\}\) implies \(g=0\),
so \(\Lambda_n\) is injective. Since \(\Lambda_n\) maps \(X_n\)
injectively into the one-dimensional space \(\mathbb C\), it follows that
$\dim X_n\leq1.$

Since \(h_n\neq0\) and
$
h_n\in \ker\left(W_{\psi,\varphi}^{*}-\overline a^{\,n}I\right),
$
the eigenspace is nonzero and has dimension at most one. Therefore
\[
    \ker\left(W_{\psi,\varphi}^{*}-\bar a^{\,n}I\right)
    =
    \operatorname{span}\{h_n\}.
\]
\end{proof}

\begin{corollary}\label{cor:eigen}
\[\sigma_p(W_{\psi, \varphi})=\{a^n:n\ge0\},\qquad \sigma_p(W_{\psi, \varphi}^*)
    =
    \{\overline a^{\,n}:n\ge0\}.\]
\end{corollary}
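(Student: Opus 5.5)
Theorem~\ref{lem:eigenvectors} already gives the inclusions \(\{a^n:n\ge0\}\subset\sigma_p(W_{\psi,\varphi})\) and \(\{\overline a^{\,n}:n\ge0\}\subset\sigma_p(W_{\psi,\varphi}^*)\), because \(q_n\neq0\) and \(h_n\neq0\). So the only thing left is to show that no other eigenvalues occur. The plan is to do this directly for \(W_{\psi,\varphi}\) with the same division argument, and for the adjoint with the completeness argument from the proof of Theorem~\ref{lem:eigenvectors}.

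\textbf{Eigenvalues of \(W_{\psi,\varphi}\).} Let \(W_{\psi,\varphi}g=\lambda g\) with \(g\neq0\). Since \(q\) is zero-free, \(\omega=g/q\) is entire. Using \(\psi(z)q(az)=q(z)\), the eigenvalue equation becomes \(\omega(az)=\lambda\omega(z)\). Write \(\omega(z)=\sum_k\omega_kz^k\); then \(a^k\omega_k=\lambda\omega_k\) for every \(k\). Because \(\omega\not\equiv0\), some \(\omega_k\neq0\), and therefore \(\lambda=a^k\). In particular \(\lambda=0\) is excluded, since \(a\neq0\).

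\textbf{Eigenvalues of \(W_{\psi,\varphi}^*\).} Let \(W_{\psi,\varphi}^*f=\mu f\) with \(f\neq0\). For every \(m\) we have
\[
a^m\langle q_m,f\rangle=\langle W_{\psi,\varphi}q_m,f\rangle=\langle q_m,\mu f\rangle=\overline\mu\,\langle q_m,f\rangle,
\]
so \((a^m-\overline\mu)\langle q_m,f\rangle=0\). By Lemma~\ref{dense} the \(q_m\) are complete, so \(f\neq0\) forces \(\langle q_m,f\rangle\neq0\) for some \(m\). For that \(m\) we get \(\overline\mu=a^m\), that is, \(\mu=\overline a^{\,m}\). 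This completes both identities.

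There is no real obstacle here. The only point needing care is the completeness of \(\{q_m\}\), and Lemma~\ref{dense} supplies it.
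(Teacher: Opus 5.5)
Your proof is correct and essentially the paper's: the adjoint part is exactly the paper's argument, using $(a^m-\overline\mu)\langle q_m,f\rangle=0$ together with the completeness of $\{q_m\}$. For $W_{\psi,\varphi}$ the paper instead writes $f=z^j\theta$ with $\theta(0)\neq0$ and evaluates $a^j\psi(z)\theta(az)=\lambda\theta(z)$ at $z=0$, which needs only $\psi(0)=1$, whereas you divide by $q$ as in the proof of Theorem~\ref{lem:eigenvectors} and compare Taylor coefficients---the same order-of-vanishing-at-the-fixed-point idea in slightly different form.
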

\begin{proof}
By \eqref{eq:eigenspace-W}, we have
$\{a^n\}_{n=0}^\infty\subset \sigma_p(W_{\psi, \varphi}).$

Let $\lambda\in \sigma_p(W_{\psi, \varphi})$, and choose a nonzero
\(f\in\F\) such that $W_{\psi, \varphi}f=\lambda f$.
Then \(\psi(z)f(az)=\lambda f(z)\).
Write \(f(z)=z^j\theta(z)\), where $j\geq0$ and \(\theta(0)\ne0\). Then
$\psi(z)a^jz^j\theta(az)=\lambda z^j \theta(z).$
That is
$a^j\psi(z)\theta(az)=\lambda \theta(z).$
Setting $z=0$ gives
$\lambda=a^j\psi(0)=a^j.$

By  \eqref{eq:adjoint-eigenvectors}, we have
$\{\bar{a}^n\}_{n=0}^\infty\subset \sigma_p(W_{\psi, \varphi}^*).$
Suppose $W_{\psi, \varphi}^*g=\lambda g$
for some nonzero \(g\in\mathcal F^2\). For every \(n\geq0\), we have
\[
\begin{aligned}
    a^n\langle q_n,g\rangle
    &=
    \langle W_{\psi, \varphi}q_n,g\rangle  \\
    &=
    \langle q_n,W_{\psi, \varphi}^*g\rangle  \\
    &=
    \langle q_n,\lambda g\rangle  \\
    &=
    \overline\lambda\langle q_n,g\rangle.
\end{aligned}
\]
Therefore
\[(a^n-\overline\lambda)\langle q_n,g\rangle=0,
    \qquad n\geq 0.\]
If \(\lambda\neq \overline a^{\,n}\) for every \(n\), then
$\langle q_n,g\rangle=0,\ n\geq 0.$
Since \(\{q_n\}\) is complete in \(\mathcal F^2\), this implies \(g=0\),
a contradiction.  Hence
$\lambda=\bar a^{\,n}$
for some \(n\geq0\).  Thus
\[
    \sigma_p(W_{\psi, \varphi}^*)
    =
    \{\bar a^{n}:n\ge0\}.
\]
\end{proof}

\begin{corollary}\label{spectrum}
\[
\sigma(W_{\psi,\varphi})
=
\{0\}\cup\{a^n:n\ge0\}.
\]
\end{corollary}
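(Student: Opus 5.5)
We must show $\sigma(W_{\psi,\varphi})=\{0\}\cup\{a^n:n\ge0\}$. By Corollary~\ref{cor:eigen}, $\{a^n:n\ge0\}\subset\sigma(W_{\psi,\varphi})$. The spectrum is closed and $a^n\to0$, so $0\in\sigma(W_{\psi,\varphi})$. It therefore suffices to show that every $\lambda\neq0$ with $\lambda\notin\{a^n\}$ lies in the resolvent set.

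\textbf{Compact case.} Assume first that $W_{\psi,\varphi}$ is compact. By Proposition~\ref{prop:explicit-boundedness}(i) this happens exactly when $2|u|<1-|a|^2$. Riesz theory then says that every nonzero point of the spectrum is an eigenvalue. Corollary~\ref{cor:eigen} identifies these eigenvalues as the $a^n$, and the case is finished.

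\textbf{Boundary case.} The remaining case is $2|u|=1-|a|^2$, where $W_{\psi,\varphi}$ is bounded but not compact, so Riesz theory is unavailable. Here I would use a power trick. The operator $W_{\psi,\varphi}^k$ is again a weighted composition operator, with symbol $a^kz$ and weight
\[
\psi_k(z)=\prod_{j=0}^{k-1}\psi(a^jz).
\]
Its quadratic coefficient is $u_k=u\sum_{j<k}a^{2j}=c(1-a^{2k})$. Since $0\le c<1/2$, we get $2|u_k|\le 2c(1+|a|^{2k})$, and I need this to be strictly less than $1-|a|^{2k}$ for large $k$.

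\emph{This inequality is the main obstacle.} As $k\to\infty$ the left side tends to $2c$ and the right side tends to $1$. Because $c<1/2$, the strict inequality $2c<1$ holds, so for $k$ large enough $2|u_k|<1-|a|^{2k}$.

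By Proposition~\ref{prop:explicit-boundedness}(i), applied after noting that $W_{\psi,\varphi}^k$ is bounded and has the form \eqref{eq:1-form}, the operator $W_{\psi,\varphi}^k$ is compact for such $k$. Consequently $\sigma(W_{\psi,\varphi}^k)\setminus\{0\}$ consists of eigenvalues of $W_{\psi,\varphi}^k$.

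\textbf{Transfer back to $W_{\psi,\varphi}$.} By the spectral mapping theorem, if $\lambda\neq0$ lies in $\sigma(W_{\psi,\varphi})$, then $\lambda^k$ is an eigenvalue of $W_{\psi,\varphi}^k$. The operator $W_{\psi,\varphi}^k$ is itself in reduced form (symbol $a^kz$, weight of the form $\exp(u_kz^2+v_kz)$ with $|u_k|<1/2$). So the argument of Corollary~\ref{cor:eigen} applies to it: evaluating at $0$ after factoring out $z^j$ gives $\lambda^k=a^{kj}$ for some $j$.

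This alone does not identify $\lambda$, since it only determines $\lambda$ up to a $k$-th root of unity. To fix $\lambda$ itself, I would use the Riesz decomposition. Let $E$ be the finite-dimensional generalized eigenspace of $W_{\psi,\varphi}^k$ for the isolated eigenvalue $\mu=a^{kj}$. The space $E$ is invariant under $W_{\psi,\varphi}$, because $W_{\psi,\varphi}$ commutes with $W_{\psi,\varphi}^k$ and hence with its Riesz projection. The part of $\sigma(W_{\psi,\varphi})$ lying over $\mu$ consists of eigenvalues of $W_{\psi,\varphi}|_E$. These are genuine eigenvalues of $W_{\psi,\varphi}$, so by Corollary~\ref{cor:eigen} they are powers $a^n$. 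Hence $\lambda\in\{a^n\}$, which completes the proof.
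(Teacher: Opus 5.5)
Your proof is correct, but in the noncompact case it takes a different route from the paper.

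\textbf{The paper's route.} The paper first observes that noncompactness forces $a\notin\R$: for real $a$ one has $2|u|=2c(1-a^2)<1-a^2$, so the operator would be compact. Then $2|u|=1-|a|^2$ together with the strict inequality $|1+a^2|<1+|a|^2$ gives $2|u(1+a^2)|<1-|a|^4$. So $W_{\psi,\varphi}^2$ is already compact, and the paper simply cites the fact that for a power compact operator every nonzero spectral value is an eigenvalue.

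\textbf{Your route.} You use the cruder bound $|u_k|=c|1-a^{2k}|\le c(1+|a|^{2k})$. This makes $W_{\psi,\varphi}^k$ compact as soon as $|a|^{2k}<(1-2c)/(1+2c)$. It needs no case distinction on whether $a$ is real and no use of part (ii) of Proposition~\ref{prop:explicit-boundedness}; in fact it makes your separate compact case unnecessary. The price is a weaker conclusion: some power is compact, whereas the paper shows the square is always compact.

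\textbf{The Riesz-projection step.} You also prove the power-compact spectral fact yourself, using the Riesz projection $P$ of $W_{\psi,\varphi}^k$ at $\mu=\lambda^k$. That argument is sound, but the sentence ``the part of $\sigma(W_{\psi,\varphi})$ lying over $\mu$ consists of eigenvalues of $W_{\psi,\varphi}|_E$'' should be justified. The justification runs as follows.
\begin{enumerate}
\item Since $P$ commutes with $W_{\psi,\varphi}$, you have $\sigma(W_{\psi,\varphi})=\sigma(W_{\psi,\varphi}|_E)\cup\sigma(W_{\psi,\varphi}|_{\ker P})$.
\item By the spectral mapping theorem, $\sigma(W_{\psi,\varphi}|_{\ker P})^k=\sigma(W_{\psi,\varphi}^k|_{\ker P})=\sigma(W_{\psi,\varphi}^k)\setminus\{\mu\}$, so $\lambda\notin\sigma(W_{\psi,\varphi}|_{\ker P})$.
\item Since $\dim E<\infty$, $\lambda$ is then an eigenvalue of $W_{\psi,\varphi}|_E$, hence of $W_{\psi,\varphi}$.
\end{enumerate}
Finally, your intermediate computation $\lambda^k=a^{kj}$ is superfluous: the Riesz argument works directly with $\mu=\lambda^k$, and Corollary~\ref{cor:eigen} then identifies $\lambda$.
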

\begin{proof}
By Corollary~\ref{cor:eigen},
\[
\sigma_p(W_{\psi,\varphi})
=
\{a^n:n\ge0\}.
\]
Since \(a^n\to0\) and the spectrum is closed,
$0\in\sigma(W_{\psi,\varphi}).$

If \(W_{\psi,\varphi}\) is compact, then the Riesz--Schauder theorem gives
\[
\sigma(W_{\psi,\varphi})
=
\{0\}\cup\sigma_p(W_{\psi,\varphi})
=
\{0\}\cup\{a^n:n\ge0\}.
\]

Assume now that \(W_{\psi,\varphi}\) is not compact. We claim that
\(a\notin\mathbb R\). Indeed, if \(a\in\mathbb R\), then
\[2|u|=2c(1-a^2)<1-a^2=1-|a|^2,\]
so Proposition~\ref{prop:explicit-boundedness} \textup{(i)} would imply
that \(W_{\psi,\varphi}\) is compact, a contradiction. Hence
\(a\notin\mathbb R\). Since \(W_{\psi,\varphi}\) is bounded but not
compact, Proposition~\ref{prop:explicit-boundedness} \textup{(ii)} yields
$2|u|=1-|a|^2.$
Moreover,
\[
W_{\psi,\varphi}^2=W_{f,a^2z},
\qquad
f(z)=\exp\!\left(u(1+a^2)z^2+v(1+a)z\right).
\]
Since \(a\notin\mathbb R\),
$|1+a^2|<1+|a|^2,$
and therefore
\[
2|u(1+a^2)|
=(1-|a|^2)|1+a^2|
<1-|a|^4.
\]
Applying Proposition~\ref{prop:explicit-boundedness} to
\(W_{f,a^2z}\), we obtain that \(W_{\psi,\varphi}^2\) is compact.
Thus \(W_{\psi,\varphi}\) is power compact,  so every nonzero spectral value is an eigenvalue. Consequently,
\[
\sigma(W_{\psi,\varphi})
=
\{0\}\cup\sigma_p(W_{\psi,\varphi})
=
\{0\}\cup\{a^n:n\ge0\}.
\]
\end{proof}
\begin{remark}
The compactness of $W_{\psi,\varphi}^2$ in the proof of the preceding corollary
also follows from Theorem~4.2 of \cite{CarrollGilmore2021}.
\end{remark}
We will also use the following elementary Gaussian integrals. For
\(\kappa>0\),
\[
  \int_{-\infty}^{+\infty}e^{-\kappa x^2}\,dx
  =\sqrt{\frac{\pi}{\kappa}},
  \quad
  \int_{-\infty}^{+\infty}xe^{-\kappa x^2}\,dx=0,
  \quad
  \int_{-\infty}^{+\infty}x^2e^{-\kappa x^2}\,dx
  =\frac{1}{2\kappa}\sqrt{\frac{\pi}{\kappa}}.
\]
These standard formulas yield the shifted and linearly perturbed Gaussian
identities stated in the following lemma. The shifted identities below use
only real translations. The formula with a complex linear term follows first
for real \(\delta\) by completing the square and then for all
\(\delta\in\C\) by analytic continuation.
\begin{lemma}\label{le:Gaussian-integ}
For \(\kappa>0\), \(\beta\in\mathbb R\), and \(\delta\in\C\), we have
\begin{equation}\label{eq:shifted-gaussian}
   \int_{-\infty}^{+\infty}\e^{-\kappa (x-\beta)^2}\dd x=\sqrt{\frac{\pi}{\kappa}},
\end{equation}
\begin{equation}\label{eq:linear-gaussian}
   \int_{-\infty}^{+\infty}\e^{-\kappa x^2+\delta x}\dd x
   =\exp\left(\frac{\delta^2}{4\kappa}\right)\sqrt{\frac{\pi}{\kappa}},
\end{equation}
\begin{equation}\label{eq:shifted-first-moment}
  \int_{-\infty}^{+\infty}x\e^{-\kappa (x-\beta)^2}\dd x
  =\beta\int_{-\infty}^{+\infty}\e^{-\kappa (x-\beta)^2}\dd x,
\end{equation}
\begin{equation}\label{eq:shifted-second-moment}
   \int_{-\infty}^{+\infty}x^2\e^{-\kappa (x-\beta)^2}\dd x
   =\left(\frac{1}{2\kappa}+\beta^2\right)\int_{-\infty}^{+\infty}\e^{-\kappa (x-\beta)^2}\dd x.
\end{equation}
\end{lemma}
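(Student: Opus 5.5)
The four identities in Lemma~\ref{le:Gaussian-integ} all come from the three standard integrals just recalled. For \eqref{eq:shifted-gaussian}, \eqref{eq:shifted-first-moment} and \eqref{eq:shifted-second-moment} we substitute $y=x-\beta$, and for \eqref{eq:linear-gaussian} we complete the square and then use analytic continuation in $\delta$.

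First, since $\beta\in\mathbb R$, the map $x\mapsto y=x-\beta$ is a measure-preserving translation of $\mathbb R$. Hence
\[
\int_{-\infty}^{+\infty}\e^{-\kappa(x-\beta)^2}\dd x=\int_{-\infty}^{+\infty}\e^{-\kappa y^2}\dd y=\sqrt{\pi/\kappa},
\]
which is \eqref{eq:shifted-gaussian}. For the moments, write $x=y+\beta$ and $x^2=y^2+2\beta y+\beta^2$. The first standard integral handles the constant term, the vanishing odd moment $\int y\e^{-\kappa y^2}\dd y=0$ kills the linear term, and the second moment $\frac{1}{2\kappa}\sqrt{\pi/\kappa}$ handles $y^2$. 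Collecting terms gives $\beta\sqrt{\pi/\kappa}$ and $(\frac1{2\kappa}+\beta^2)\sqrt{\pi/\kappa}$. By \eqref{eq:shifted-gaussian} these equal the right-hand sides of \eqref{eq:shifted-first-moment} and \eqref{eq:shifted-second-moment}. All integrals converge absolutely because of the Gaussian factor.

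For \eqref{eq:linear-gaussian}, take $\delta$ real first and write
\[
-\kappa x^2+\delta x=-\kappa\Bigl(x-\frac{\delta}{2\kappa}\Bigr)^2+\frac{\delta^2}{4\kappa}.
\]
Then \eqref{eq:shifted-gaussian} with the real shift $\beta=\delta/(2\kappa)$ gives the formula. To pass to complex $\delta$, set
\[
F(\delta)=\int_{-\infty}^{+\infty}\e^{-\kappa x^2+\delta x}\dd x .
\]
We show $F$ is entire. On any compact set $|\delta|\le R$, the integrand is dominated by $\e^{-\kappa x^2+R|x|}$, which is integrable. So $F$ is continuous, and Morera's theorem together with Fubini makes it holomorphic; alternatively, differentiate under the integral sign. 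The right-hand side $\exp(\delta^2/(4\kappa))\sqrt{\pi/\kappa}$ is also entire. The two entire functions agree on $\mathbb R$, which has accumulation points, so the identity theorem gives equality on all of $\C$.

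The only step needing genuine justification is the analytic continuation: we must supply the locally uniform integrable majorant $\e^{-\kappa x^2+R|x|}$ so that $F$ is entire. The rest is routine substitution.
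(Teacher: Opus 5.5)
Your proposal is correct and follows the same route as the paper: real translation for the shifted Gaussian and its moments, completing the square for real $\delta$, and analytic continuation in $\delta$ to reach complex values. Your majorant $\e^{-\kappa x^2+R|x|}$, which shows that the left-hand side of \eqref{eq:linear-gaussian} is entire, supplies a justification the paper leaves implicit.
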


The next calculation is the numerical core of the necessity argument.

\begin{lemma}\label{lem:moments}
\begin{equation}\label{eq:moments}
  \|q\|^2=\frac{M}{\sqrt D},\qquad
  \ip{zq}{q}=\|q\|^2L,
  \qquad
  \|zq\|^2=\|q\|^2\left(|L|^2+\frac1D\right).
\end{equation}
The constants $M$, $D$, and $L$ are defined in Subsection~\ref{subsec:standing-notation}.
\end{lemma}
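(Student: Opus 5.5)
The plan is to compute everything from real Gaussian integrals after writing $z=x+iy$. With $c$ real and $b=b_1+ib_2$,
\[
\operatorname{Re}(cz^2+bz)=c(x^2-y^2)+b_1x-b_2y,
\]
so
\[
|q(z)|^2e^{-|z|^2}=\exp\bigl(-Ax^2+2b_1x\bigr)\exp\bigl(-By^2-2b_2y\bigr),
\]
where $A=1-2c>0$ and $B=1+2c>0$ by the standing assumption $0\le c<\tfrac12$. The measure therefore factors as a product of two one-dimensional Gaussians.

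First step, the norm. Completing the square gives
\[
-Ax^2+2b_1x=-A(x-\beta_1)^2+\frac{b_1^2}{A},\quad \beta_1=\frac{b_1}{A},
\]
\[
-By^2-2b_2y=-B(y-\beta_2)^2+\frac{b_2^2}{B},\quad \beta_2=-\frac{b_2}{B}.
\]
Then \eqref{eq:shifted-gaussian} yields
\[
\|q\|^2=\frac1\pi\, M\sqrt{\frac{\pi}{A}}\sqrt{\frac{\pi}{B}}=\frac{M}{\sqrt D}.
\]
Equivalently, the normalized measure $d\mu=\pi^{-1}|q|^2e^{-|z|^2}dA/\|q\|^2$ is a product of independent real Gaussians. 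Its means are $\beta_1,\beta_2$ and its variances are $1/(2A)$ and $1/(2B)$.

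Second step, the first moment. Here $\ip{zq}{q}=\|q\|^2\int z\,d\mu$. By \eqref{eq:shifted-first-moment} this equals
\[
\|q\|^2(\beta_1+i\beta_2)=\|q\|^2\Bigl(\frac{b_1}{A}-i\frac{b_2}{B}\Bigr)=\|q\|^2L.
\]
The integrand is $z$, not $\bar z$, because $\ip{zq}{q}=\frac1\pi\int z|q|^2e^{-|z|^2}dA$.

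Third step, the second moment. Write $\|zq\|^2=\|q\|^2\int(x^2+y^2)\,d\mu$ and apply \eqref{eq:shifted-second-moment} to each factor:
\[
\frac{1}{2A}+\beta_1^2+\frac1{2B}+\beta_2^2=|L|^2+\frac{A+B}{2AB}=|L|^2+\frac1D,
\]
since $A+B=2$.

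There is no serious obstacle. The only points needing care are the sign bookkeeping (the mean $-b_2/B$ in $y$, and the use of $z$ rather than $\bar z$ in the first moment) and confirming $B>0$, which holds because $c\ge0$. As a cross-check, I would verify the algebraic form of $L$ in \eqref{eq:m-algebraic} by expanding $(\overline b+2cb)/D$.
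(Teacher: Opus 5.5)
Your proposal is correct and follows essentially the same route as the paper: factor $|q(z)|^2e^{-|z|^2}$ into two shifted one-dimensional Gaussians centered at $b_1/A$ and $-b_2/B$, then read off $\|q\|^2$, $\ip{zq}{q}$ and $\|zq\|^2$ from the shifted Gaussian identities of Lemma~\ref{le:Gaussian-integ}. Your phrasing in terms of the means and variances $1/(2A)$, $1/(2B)$ of the normalized measure is just a repackaging of the paper's integrals $I_0, I_x, I_y, I_{xx}, I_{yy}$.
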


\begin{proof}
For \(z=x+iy\),
\[\begin{aligned}
 |q(z)|^2\e^{-|z|^2}&=
   \e^{-Ax^2-By^2+2b_1x-2b_2y}  \\
 &=M\exp\left[-A\left(x-\frac{b_1}{A}\right)^2
              -B\left(y+\frac{b_2}{B}\right)^2\right].
\end{aligned}
\]
Set
\[
  E(x,y)=\exp\left[-A\left(x-\frac{b_1}{A}\right)^2
              -B\left(y+\frac{b_2}{B}\right)^2\right].
\]
Using the formula \eqref{eq:shifted-gaussian}, first with
\((\kappa,\beta)=(A,b_1/A)\) and then with
\((\kappa,\beta)=(B,-b_2/B)\), we get
\[
\begin{aligned}
  I_0
  &:=\int_{\R}\int_{\R}E(x,y)\dd x\dd y\\
  &=\left(\int_{\R}\e^{-A(x-\frac{b_1}{A})^2}\dd x\right)
    \left(\int_{\R}\e^{-B(y+\frac{b_2}{B})^2}\dd y\right)\\
  &=\sqrt{\frac{\pi}{A}}\sqrt{\frac{\pi}{B}}
    =\frac{\pi}{\sqrt D}.
\end{aligned}
\]
Therefore
\[
\begin{aligned}
   \|q\|^2
   &=
   \frac1\pi
   \int_{\R}\int_{\R}
   |q(x+iy)|^2\e^{-x^2-y^2}\dd x\dd y=\frac{M}{\pi}I_0= \frac{M}{\sqrt D}.
\end{aligned}
\]

Next we compute $\ip{zq}{q}$.  By the  formula
\eqref{eq:shifted-first-moment},
\[
\begin{aligned}
I_x
  :=\int_{\R}\int_{\R}xE(x,y)\dd x\dd y
    =\frac{b_1}{A}I_0,\quad
I_y
:=\int_{\R}\int_{\R}yE(x,y)\dd x\dd y
=-\frac{b_2}{B}I_0.
\end{aligned}
\]
Since \(z=x+iy\), we have
\[\begin{aligned}
\ip{zq}{q}
&= \frac{M}{\pi}\int_{\R}\int_{\R}(x+iy)E(x,y)\dd x\dd y \\
&= \frac{M}{\pi}(I_x+iI_y)\\
&= \frac{M}{\pi}\left(\frac{b_1}{A}-i\frac{b_2}{B}\right)I_0\\
&=\|q\|^2L.
\end{aligned}
\]
By the formula
\eqref{eq:shifted-second-moment},
\[
\begin{aligned}
  I_{xx}
  &:=\int_{\R}\int_{\R}x^2E(x,y)\dd x\dd y
    =\left(\frac{1}{2A}+\frac{b_1^2}{A^2}\right)I_0,\\
  I_{yy}
  &:=\int_{\R}\int_{\R}y^2E(x,y)\dd x\dd y
    =\left(\frac{1}{2B}+\frac{b_2^2}{B^2}\right)I_0.
\end{aligned}
\]
Thus
\[
\begin{aligned}
\|zq\|^2
&= \frac{M}{\pi}
   \int_{\R}\int_{\R}(x^2+y^2)E(x,y)\dd x\dd y\\
&= \frac{M}{\pi}(I_{xx}+I_{yy})\\
&= \|q\|^2
   \left(
     \frac{b_1^2}{A^2}+\frac{b_2^2}{B^2}
     +\frac1{2A}+\frac1{2B}
   \right).
\end{aligned}
\]
Since
\[
  |L|^2=\frac{b_1^2}{A^2}+\frac{b_2^2}{B^2},
  \qquad
  \frac1{2A}+\frac1{2B}
  =\frac{A+B}{2AB}=\frac1D,
\]
we obtain
\[
  \|zq\|^2
  =\|q\|^2\left(|L|^2+\frac1D\right).
\]
\end{proof}

The following is a key lemma of the paper. Since $Q(t,z)
  =\sum_{n=0}^{\infty}\frac{q_n(z)}{n!}t^n$ and $H(t,z)
  =\sum_{n=0}^{\infty}\frac{h_n(z)}{n!}t^n$, it essentially
determines the Gram matrices of the families $\{q_n\}$ and $\{h_n\}$.
\begin{lemma}\label{lem:gram}
For \(s,r\in\C\), the following identities hold:
\begin{align}
 \ip{H(s,\cdot)}{H(\overline r,\cdot)}
 &=\exp\{sr-s\overline b-rb-c(s^2+r^2)\},
 \label{eq:H-Gram}\\
 \ip{Q(s,\cdot)}{Q(\overline r,\cdot)}
 &=\|q\|^2\exp\left\{
   \frac cD(s^2+r^2)+\frac1Dsr+Ls+\overline L r\right\},
 \label{eq:Q-Gram}\\
 \ip{H(s,\cdot)}{Q(\bar r,\cdot)}&=\e^{sr}. \notag
\end{align}
In particular,
\begin{equation}\label{eq:biorthogonal}
  \ip{h_m}{q_n}
=\begin{cases}
n!,
& n=m,\\[1ex]
0,
& n\neq m.
\end{cases}
\end{equation}
\end{lemma}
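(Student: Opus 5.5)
The plan is to compute each of the three inner products directly in closed form, using the reproducing kernel whenever one argument is a kernel multiple, and Gaussian integration otherwise. Then coefficients are extracted using the locally uniform norm convergence from Lemma~\ref{lem:norm-convergence-generating}.

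\emph{First identity \eqref{eq:H-Gram}.} As noted in the proof of Theorem~\ref{lem:eigenvectors}, $H(t,\cdot)=\e^{-t\overline b-ct^2}K_{\overline t}$. Hence
\[
\ip{H(s,\cdot)}{H(\overline r,\cdot)}=\e^{-s\overline b-cs^2}\,\overline{\e^{-\overline r\,\overline b-c\overline r^{2}}}\,\ip{K_{\overline s}}{K_{r}} .
\]
Since $c$ is real, the conjugated scalar equals $\e^{-rb-cr^2}$. Also $\ip{K_{\overline s}}{K_r}=K_{\overline s}(r)=\e^{r s}$. This gives \eqref{eq:H-Gram} at once.

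\emph{Third identity.} By the reproducing property,
\[
\ip{H(s,\cdot)}{Q(\overline r,\cdot)}=\overline{\ip{Q(\overline r,\cdot)}{H(s,\cdot)}}=\overline{\e^{-s\overline b-cs^2}}\;\overline{Q(\overline r,\overline s)}.
\]
We have $Q(\overline r,\overline s)=\exp(\overline r\,\overline s+c\overline s^2+b\overline s)$, whose conjugate is $\exp(rs+cs^2+\overline b s)$. Multiplying by $\e^{-\overline b s-cs^2}$ leaves $\e^{sr}$.

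\emph{Second identity \eqref{eq:Q-Gram}.} This is the only genuine computation, and I expect it to be the main obstacle. The integrand is
\[
\ip{Q(s,\cdot)}{Q(\overline r,\cdot)}=\frac1\pi\int \e^{sz+rz\text{-conj}}\ldots ,
\]
more precisely $\frac1\pi\int_{\C}\e^{sz+r\overline z}\,|q(z)|^2\e^{-|z|^2}\,dA$. Writing $z=x+iy$ as in Lemma~\ref{lem:moments}, the weight is $\e^{-Ax^2-By^2+2b_1x-2b_2y}$. The extra factor is $\e^{(s+r)x+i(s-r)y}$. The integral therefore splits into two one-dimensional integrals of the type \eqref{eq:linear-gaussian}:
\begin{itemize}
\item in $x$, with $\kappa=A$ and $\delta=2b_1+s+r$;
\item in $y$, with $\kappa=B$ and $\delta=-2b_2+i(s-r)$.
\end{itemize}
The result is
\[
\frac{1}{\sqrt D}\exp\Bigl(\frac{(2b_1+s+r)^2}{4A}+\frac{(i(s-r)-2b_2)^2}{4B}\Bigr).
\]
Expanding, the constant terms give $M$, so the prefactor is $\|q\|^2$ by Lemma~\ref{lem:moments}. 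The remaining terms are checked as follows.
\begin{itemize}
\item Quadratic terms: $\frac{(s+r)^2}{4A}-\frac{(s-r)^2}{4B}$. The coefficient of $s^2$ is $\frac{B-A}{4D}=\frac cD$, and the coefficient of $sr$ is $\frac{A+B}{2D}=\frac1D$.
\item Linear terms: the coefficient of $s$ is $\frac{b_1}{A}-i\frac{b_2}{B}=L$, and the coefficient of $r$ is $\frac{b_1}{A}+i\frac{b_2}{B}=\overline L$.
\end{itemize}
Applying \eqref{eq:linear-gaussian} with complex $\delta$ is justified by the analytic continuation already built into Lemma~\ref{le:Gaussian-integ}. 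Fubini is justified by absolute integrability, which holds since $A,B>0$.

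\emph{Biorthogonality \eqref{eq:biorthogonal}.} By Lemma~\ref{lem:norm-convergence-generating}, we may expand both sides in norm:
\[
\ip{H(s,\cdot)}{Q(\overline r,\cdot)}=\sum_{m,n}\frac{s^m r^n}{m!\,n!}\ip{h_m}{q_n}.
\]
Here the conjugate-linearity in the second slot turns $\overline r^{\,n}$ into $r^n$. The double series converges absolutely by Cauchy--Schwarz and the local uniform bounds. Comparing with $\e^{sr}=\sum_n (sr)^n/n!$ and matching the coefficient of $s^mr^n$ gives $\ip{h_m}{q_n}=n!\,\delta_{mn}$.
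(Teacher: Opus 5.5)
Your proof is correct and follows the paper's argument essentially step for step. You use the kernel identity $H(t,\cdot)=\e^{-t\overline b-ct^2}K_{\overline t}$ for the first and third pairings, the product of two linear Gaussian integrals with $\delta_x=2b_1+s+r$ and $\delta_y=-2b_2+i(s-r)$ for the $Q$-Gram identity, and coefficient comparison through the norm-convergent expansions of Lemma~\ref{lem:norm-convergence-generating}. One small slip: in the third identity the scalar prefactor should carry no conjugation bar, i.e.\ $\ip{H(s,\cdot)}{Q(\overline r,\cdot)}=\e^{-s\overline b-cs^2}\,\overline{Q(\overline r,\overline s)}$, which is in fact the factor you multiply by in the next sentence.
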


\begin{proof}
The kernel identity
\[H(s,\cdot)=\e^{-s\overline b-cs^2}K_{\overline s}\qquad\text{and}\qquad
\ip{K_{\overline s}}{K_r}=\e^{sr}\]
gives
\[
\begin{aligned}
 \ip{H(s,\cdot)}{H(\overline r,\cdot)}
 &=\e^{-s\overline b-cs^2}
   \e^{-rb-cr^2}\ip{K_{\overline s}}{K_r}  \\
 &=\exp\{sr-s\overline b-rb-c(s^2+r^2)\}.
\end{aligned}
\]
This proves \eqref{eq:H-Gram}.

We next prove \eqref{eq:Q-Gram}.  Since
\(Q(t,z)=\e^{tz}q(z)\),
\[
 \ip{Q(s,\cdot)}{Q(\overline r,\cdot)}
 =\frac1\pi\int_{\C}\e^{sz+r\overline z}|q(z)|^2\e^{-|z|^2}\dd A(z).
\]
Because for \(z=x+iy\),
\[\begin{aligned}
&\quad \e^{sz+r\overline z}|q(z)|^2\e^{-|z|^2}\\
&=\e^{(s+r)x+i(s-r)y}\e^{-Ax^2-By^2+2b_1x-2b_2y}\\
&=\e^{-Ax^2+(2b_1+(s+r))x-By^2+(-2b_2+i(s-r))y},
\end{aligned}
\]
put
\[
  \delta_x=2b_1+s+r,\qquad
  \delta_y=-2b_2+i(s-r).
\]
Using the linear Gaussian integral \eqref{eq:linear-gaussian} from
Lemma~\ref{le:Gaussian-integ}, first with
\((\kappa,\delta)=(A,\delta_x)\) and then with
\((\kappa,\delta)=(B,\delta_y)\), we obtain
\[\begin{aligned}
&\quad \ip{Q(s,\cdot)}{Q(\overline r,\cdot)}\\
&=\frac{1}{\pi}
  \left(\int_{\R}\e^{-Ax^2+\delta_xx}\dd x\right)
  \left(\int_{\R}\e^{-By^2+\delta_yy}\dd y\right)\\
&=\frac1{\sqrt{AB}}
  \exp\left(\frac{\delta_x^2}{4A}+\frac{\delta_y^2}{4B}\right).
\end{aligned}
\]
Now
\[
\begin{aligned}
\frac{\delta_x^2}{4A}
&=\frac{b_1^2}{A}+\frac{b_1}{A}(s+r)+\frac{(s+r)^2}{4A},\\
\frac{\delta_y^2}{4B}
&=\frac{b_2^2}{B}-i\frac{b_2}{B}(s-r)-\frac{(s-r)^2}{4B}.
\end{aligned}
\]
Therefore, by the definitions of \(M\) and \(D=AB\),
\[
\begin{aligned}
 \ip{Q(s,\cdot)}{Q(\overline r,\cdot)}
&=\frac{M}{\sqrt D}
  \exp\left\{
   (s+r)\frac{b_1}{A}-i(s-r)\frac{b_2}{B}
   +\frac{(s+r)^2}{4A}-\frac{(s-r)^2}{4B}\right\}.
\end{aligned}
\]
Note that
\[
  (s+r)\frac{b_1}{A}-i(s-r)\frac{b_2}{B}
  =Ls+\overline L r,
\]
and 
\[
\begin{aligned}
  \frac{(s+r)^2}{4A}-\frac{(s-r)^2}{4B}
  &=\frac{B(s+r)^2-A(s-r)^2}{4D}  \\
  &=\frac{4c(s^2+r^2)+4sr}{4D}
    =\frac cD(s^2+r^2)+\frac1Dsr.
\end{aligned}
\]
Since \(\|q\|^2=M/\sqrt D\) by \eqref{eq:moments}, this gives
\[
 \ip{Q(s,\cdot)}{Q(\overline r,\cdot)}
 =\|q\|^2
 \exp\left\{\frac cD(s^2+r^2)+\frac1Dsr+Ls+\overline L r\right\}.
\]
This proves
\eqref{eq:Q-Gram}.

Finally, the reproducing property gives
\[
\begin{aligned}
 \ip{H(s,\cdot)}{Q(\overline r,\cdot)}
 =\e^{-s\overline b-cs^2}
   \overline{Q(\overline r,\overline s)}
 =\e^{-s\overline b-cs^2}\e^{sr}
   \e^{cs^2+\overline b s}=\e^{sr}.
\end{aligned}
\]
Since $
Q(\bar{r},z)
=\sum_{n=0}^{\infty}\frac{q_n(z)}{n!}\bar{r}^n$
and
$H(s,z)=\sum_{n=0}^{\infty}\frac{h_n(z)}{n!}s^n,$
applying the formula above and using uniqueness of the resulting two-variable power series proves \eqref{eq:biorthogonal}.
\end{proof}

\section{Proof of Theorem B}

Throughout this section, the standing hypotheses and notation of
Subsection~\ref{subsec:standing-notation} are in force.
\begin{lemma}\label{lm:main}
Let \(W_{\psi,\varphi}\) be a bounded weighted composition operator of the
form \eqref{eq:in-reduced-form}. 
If \(W_{\psi,\varphi}\) is
complex symmetric, then either
\begin{equation}\label{eq:main-condition}
\begin{gathered}
c=0;\\
\text{or}\quad
0<c<\frac12
\quad\text{and}\quad
\frac{(\operatorname{Re}b)^2}{1-2c}
=
\frac{(\operatorname{Im}b)^2}{1+2c}.
\end{gathered}
\end{equation}
\end{lemma}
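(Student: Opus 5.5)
The plan is to use a conjugation $C$ with $W_{\psi,\varphi}=CW_{\psi,\varphi}^*C$ to transport the eigenvectors $h_n$ of $W_{\psi,\varphi}^*$ onto the eigenvectors $q_n$ of $W_{\psi,\varphi}$. Since $C$ is antiunitary, this forces the Gram matrix of $\{h_n\}$ to equal, up to diagonal scalings, the (transposed) Gram matrix of $\{q_n\}$. Comparing a few low-order entries should then give \eqref{eq:main-condition}.

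\textbf{Step 1: $C$ maps $h_n$ onto multiples of $q_n$.} Since $C$ is anti-linear,
\[
W_{\psi,\varphi}Ch_n=CW_{\psi,\varphi}^*h_n=C(\bar a^{\,n}h_n)=a^nCh_n .
\]
By Theorem~\ref{lem:eigenvectors}, $Ch_n=\lambda_nq_n$ for some scalar $\lambda_n$, and $\lambda_n\neq0$ because $C$ is injective. The conjugation property $\ip{Cx}{Cy}=\ip{y}{x}$ then gives
\[
\lambda_m\overline{\lambda_n}\,\ip{q_m}{q_n}=\ip{h_n}{h_m},\qquad m,n\ge0 .
\]

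\textbf{Step 2: read off low-order entries from Lemma~\ref{lem:gram}.} Expanding \eqref{eq:H-Gram} in powers of $s$ and $r$, the coefficient of $s^mr^n/(m!n!)$ is $\ip{h_m}{h_n}$. This gives
\[
\ip{h_0}{h_0}=1,\qquad \ip{h_0}{h_1}=-b,\qquad \ip{h_1}{h_1}=1+|b|^2 .
\]
In the same way, \eqref{eq:Q-Gram} (or directly Lemma~\ref{lem:moments}) gives
\[
\ip{q_0}{q_0}=\|q\|^2,\qquad \ip{q_1}{q_0}=\|q\|^2L,\qquad \|q_1\|^2=\|q\|^2\bigl(|L|^2+\tfrac1D\bigr).
\]

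\textbf{Step 3: take moduli and eliminate $\lambda_0,\lambda_1$.} Inserting these values into the identity of Step 1 for $(m,n)=(0,0),(1,0),(1,1)$ gives
\[
|\lambda_0|^2\|q\|^2=1,\qquad |\lambda_1||\lambda_0|\,\|q\|^2|L|=|b|,\qquad |\lambda_1|^2\|q\|^2\bigl(|L|^2+\tfrac1D\bigr)=1+|b|^2 .
\]
Set $\rho=|\lambda_1|^2/|\lambda_0|^2$. Dividing by the first relation yields
\[
\rho|L|^2=|b|^2,\qquad \rho\bigl(|L|^2+\tfrac1D\bigr)=1+|b|^2 .
\]
Subtracting gives $\rho=D$, and hence $D|L|^2=|b|^2$; this holds trivially when $b=0$, since then $L=0$. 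Using $L=b_1/A-ib_2/B$ and $D=AB$,
\[
D|L|^2-|b|^2=b_1^2\Bigl(\frac BA-1\Bigr)+b_2^2\Bigl(\frac AB-1\Bigr)=4c\Bigl(\frac{b_1^2}{A}-\frac{b_2^2}{B}\Bigr).
\]
So either $c=0$, or $0<c<\frac12$ and $b_1^2/(1-2c)=b_2^2/(1+2c)$, which is exactly \eqref{eq:main-condition}.

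\textbf{Main obstacle.} The only delicate point is bookkeeping: the order of arguments and the conjugations in the Gram identity of Step 1, and extracting the correct power-series coefficients from Lemma~\ref{lem:gram}, where $\bar r$ appears in the second slot. Only moduli of the $(0,1)$ entries are used, so phase and transposition ambiguities do not affect the conclusion. I would double-check $\ip{h_1}{h_1}=1+|b|^2$ directly from $h_1=z-\bar b$ as a sanity check.
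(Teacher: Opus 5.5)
Your proposal is correct and follows essentially the same route as the paper. The paper also transports $h_0,h_1$ to nonzero multiples of $q,zq$ via the one-dimensional eigenspaces, uses the antiunitary relations for the entries $(0,0)$, $(1,0)$, $(1,1)$, and eliminates the scalars by taking moduli to get $D|L|^2=|b|^2$. It then expands this to obtain the stated condition; your subtraction step giving $|\lambda_1|^2/|\lambda_0|^2=D$ is just an equivalent way of doing the same elimination.
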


\begin{proof}
Suppose that \(CW_{\psi,\varphi}^*=W_{\psi,\varphi}C\) for a conjugation
\(C\).  By Lemma~\ref{lem:eigenvectors}, 
\[W_{\psi,\varphi}^*h_0=h_0,\quad\text{and}\quad W_{\psi,\varphi}^*h_1=\bar{a}h_1.\]
Hence \[W_{\psi,\varphi}Ch_0=Ch_0,\quad\text{and}\quad W_{\psi,\varphi}Ch_1=aCh_1.\]
The one-dimensional eigenspaces in \eqref{eq:eigenspace-W} imply that there
exist nonzero constants \(\lambda\) and \(\mu\) such that
\begin{equation*}
  Ch_0=\lambda q,
  \qquad
  Ch_1=\mu zq.
\end{equation*}
Since \(C\) is isometric,
\begin{equation}\label{eq:norm-relations}
  |\lambda|^2\|q\|^2=1,
  \qquad
  |\mu|^2\|zq\|^2=\|z-\overline b\|^2=1+|b|^2.
\end{equation}
Furthermore,
\[
  \mu\overline\lambda\ip{zq}{q}
  =\ip{Ch_1}{Ch_0}=\ip{h_0}{h_1}=-b.
\]
Taking absolute values and using \eqref{eq:norm-relations}, we obtain
\begin{equation*}
  (1+|b|^2)|\ip{zq}{q}|^2
  =|b|^2\|q\|^2\|zq\|^2.
\end{equation*}
Substitution of \eqref{eq:moments} reduces this identity to
$|L|^2=|b|^2/D.$
By \eqref{eq:m-algebraic}, this is
\[
  |\overline b+2cb|^2=(1-4c^2)|b|^2.
\]
Writing \(b=b_1+ib_2\) and expanding both sides gives
\[
  4c\bigl((1+2c)b_1^2-(1-2c)b_2^2\bigr)=0.
\]
Thus either \(c=0\), or \(c>0\) and \eqref{eq:main-condition} holds.
\end{proof}
\begin{lemma}\label{lem:abstract-gram}
Let \(\{x_n\}_{n\ge0}\) and \(\{y_n\}_{n\ge0}\) be complete families in a
Hilbert space \(\mathbb{H}\), and suppose that
\[
\langle y_m,x_n\rangle=\kappa_n\delta_{mn},
\qquad \kappa_n\ne0.
\]
Let \(\lambda_n\ne0\) for all \(n\ge0\). If
\begin{equation}\label{eq:abstract-gram}
\langle \lambda_mx_m,\lambda_nx_n\rangle
=
\langle y_n,y_m\rangle,
\qquad m,n\ge0,
\end{equation}
then the formula
\[
J\!\left(\sum_{n=0}^N\alpha_ny_n\right)
=
\sum_{n=0}^N\overline{\alpha_n}\lambda_nx_n
\]
extends uniquely to a conjugation on \(\mathbb{H}\).

\end{lemma}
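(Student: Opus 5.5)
First I define $J$ on the dense subspace $\mathcal Y=\operatorname{span}\{y_n\}$. The family $\{y_n\}$ is linearly independent because of biorthogonality: if $\sum\alpha_n y_n=0$, pairing with $x_m$ gives $\alpha_m\kappa_m=0$, so $\alpha_m=0$. Hence the coefficients $\alpha_n$ are uniquely determined and $J$ is a well-defined anti-linear map on $\mathcal Y$. The next step is to check that it preserves inner products in the conjugate sense. For $f=\sum\alpha_ny_n$ and $g=\sum\beta_my_m$,
\[
\langle Jf,Jg\rangle=\sum_{m,n}\overline{\alpha_n}\beta_m\langle\lambda_nx_n,\lambda_mx_m\rangle
=\sum_{m,n}\overline{\alpha_n}\beta_m\langle y_m,y_n\rangle=\langle g,f\rangle,
\]
where \eqref{eq:abstract-gram} is used with the roles of $m$ and $n$ interchanged. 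In particular $\|Jf\|=\|f\|$, so $J$ is an anti-linear isometry on $\mathcal Y$. Since $\mathcal Y$ is dense, $J$ extends uniquely by continuity to an anti-linear isometry of $\mathbb H$, and the identity $\langle Jf,Jg\rangle=\langle g,f\rangle$ persists by continuity.

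What remains is to show $J^2=I$, and this is the main obstacle, because $J$ is defined only in terms of the $y_n$. It suffices to prove $Jx_n=\overline{\lambda_n}^{-1}y_n$. Then on $\operatorname{span}\{y_n\}$ we have
\[
J^2y_n=J(\lambda_nx_n)=\overline{\lambda_n}\,Jx_n=y_n,
\]
and $J^2=I$ follows by density and continuity.

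To identify $Jx_n$, I use the isometry in the form $\langle Jx_n,Jy_m\rangle=\langle y_m,x_n\rangle$. The right-hand side equals $\kappa_n\delta_{mn}$ and the left-hand side equals $\langle Jx_n,\lambda_mx_m\rangle$. So $Jx_n$ is orthogonal to every $x_m$ with $m\ne n$, and $\langle Jx_n,x_n\rangle=\kappa_n/\overline{\lambda_n}$. By the argument in Theorem~\ref{lem:eigenvectors}, completeness of $\{x_m\}$ forces $\{x_m:m\ne n\}^\perp$ to be at most one-dimensional. Since $y_n$ lies in this space, $Jx_n=\theta y_n$ for some scalar $\theta$. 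Pairing with $x_n$ gives $\theta\kappa_n=\kappa_n/\overline{\lambda_n}$, so $\theta=\overline{\lambda_n}^{-1}$, which is the required identity.

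Finally, uniqueness of the extension is immediate, since any two continuous extensions agree on the dense subspace $\mathcal Y$.
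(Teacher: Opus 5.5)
Your proof is correct and follows essentially the same route as the paper: define $J$ on $\operatorname{span}\{y_n\}$ using biorthogonality, derive $\langle Jf,Jg\rangle=\langle g,f\rangle$ from the Gram identity, extend by continuity, and then obtain $J^2=I$ from this anti-unitary identity together with completeness of $\{x_m\}$. The only difference is that you first identify $Jx_n=\overline{\lambda_n}^{-1}y_n$ through the one-dimensionality of $\{x_m:m\ne n\}^\perp$, whereas the paper checks $\langle J^2y_n,\lambda_mx_m\rangle=\langle y_n,\lambda_mx_m\rangle$ for all $m$ directly, which is slightly shorter.
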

\begin{proof}
Let
\[
  \mathcal D=\operatorname{span}\{y_n:n\geq0\}.
\]
Since \(\{y_n\}_{n\geq0}\) is complete, \(\mathcal D\) is dense in
\(\mathbb H\).

Note that the family \(\{y_n\}_{n\geq0}\) is linearly independent.
Indeed, if
$\sum_{n=0}^{N}\eta_n y_n=0,$
then, for each \(0\leq k\leq N\),
\[
  0
  =
  \left\langle
    \sum_{n=0}^{N}\eta_n y_n,x_k
  \right\rangle
  =
  \eta_k\kappa_k.
\]
Since \(\kappa_k\ne0\), it follows that \(\eta_k=0\) for every \(k\).
Hence the formula
\[
J_0\!\left(\sum_{n=0}^N\alpha_n y_n\right)
=
\sum_{n=0}^N\overline{\alpha_n}\lambda_n x_n
\]
defines a conjugate-linear operator \(J_0\) on \(\mathcal D\).

Let \(f=\sum_{m=0}^{M}\alpha_m y_m\in\mathcal D\) and
\(g=\sum_{n=0}^{N}\beta_n y_n\in\mathcal D\).
Using \eqref{eq:abstract-gram}, we obtain
\[
\begin{aligned}
  \langle J_0f,J_0g\rangle
  &=
  \sum_{m=0}^{M}\sum_{n=0}^{N}
  \overline{\alpha_m}\beta_n
  \langle \lambda_mx_m,\lambda_nx_n\rangle
  \\
  &=
  \sum_{m=0}^{M}\sum_{n=0}^{N}
  \overline{\alpha_m}\beta_n
  \langle y_n,y_m\rangle
  \\
  &=
  \left\langle
    \sum_{n=0}^{N}\beta_ny_n,
    \sum_{m=0}^{M}\alpha_my_m
  \right\rangle
  \\
  &=
  \langle g,f\rangle.
\end{aligned}
\]
In particular,
\[
  \|J_0f\|^2
  =
  \langle J_0f,J_0f\rangle
  =
  \langle f,f\rangle
  =
  \|f\|^2.
\]
Hence \(J_0\) is an isometry on \(\mathcal D\).
It therefore extends uniquely to an anti-linear isometry
$J:\mathbb H\longrightarrow\mathbb H.$
By the polarization identity, we have
\begin{equation}\label{eq:J-antiunitary}
  \langle J\bm{x},J\bm{y}\rangle=\langle \bm{y},\bm{x}\rangle
\end{equation}
for all \(\bm{x},\bm{y}\in\mathbb H\).

Fix \(m,n\geq0\). Since \(Jy_m=\lambda_mx_m\), it follows from
\eqref{eq:J-antiunitary} that
\[
\begin{aligned}
  \langle J^2y_n,\lambda_mx_m\rangle
  &=
  \langle J(\lambda_nx_n),Jy_m\rangle
  \\
  &=
  \langle y_m,\lambda_nx_n\rangle
  \\
  &=
  \overline{\lambda_n}\kappa_n\delta_{mn}\\
  &=\langle y_n,\lambda_mx_m\rangle.
\end{aligned}
\]
Completeness of $\{x_m\}$ implies $J^2y_n=y_n$ for every $n$, and completeness of $\{y_n\}$ gives $J^2=I$.
\end{proof}
\begin{lemma}\label{lem:construction}
Assume that \eqref{eq:main-condition} holds, and choose \(\rho\) by
\begin{equation}\label{eq:rho}
  \rho=
  \begin{cases}
    -\dfrac{b}{L}
    \,
      & b\ne0,\\[3mm]
    1,
      & b=0,\ c=0,\\[1mm]
    i\sqrt D,
      & b=0,\ c>0.
  \end{cases}
\end{equation}
Define
\begin{equation}\label{eq:lambda-n}
  \lambda_n=\frac{\rho^n}{\|q\|},
  \qquad n\geq0.
\end{equation}
Then the formula
\begin{equation}\label{eq:J-definition}
  J\left(\sum_{n=0}^N\alpha_nh_n\right)
  =\sum_{n=0}^N\overline{\alpha_n}\lambda_nq_n
\end{equation}
extends uniquely to a conjugation on \(\F\).
\end{lemma}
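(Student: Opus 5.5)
The plan is to apply Lemma~\ref{lem:abstract-gram} with $x_n=q_n$, $y_n=h_n$, $\kappa_n=n!$ and $\lambda_n=\rho^n/\|q\|$.

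\textbf{Hypotheses of Lemma~\ref{lem:abstract-gram}.}
- Completeness of both families is Lemma~\ref{dense}.
- The biorthogonality $\ip{h_m}{q_n}=n!\,\delta_{mn}$ is \eqref{eq:biorthogonal}.
- $\lambda_n\neq0$ once we know $\rho\neq0$. This is clear when $b=0$. When $b\neq0$ it follows from $|L|^2=|b|^2/D$, established below, which forces $L\neq0$ and makes $\rho=-b/L$ well defined.

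So everything reduces to the Gram identity \eqref{eq:abstract-gram}:
\[
\lambda_m\overline{\lambda_n}\ip{q_m}{q_n}=\ip{h_n}{h_m}\quad\text{for all } m,n\ge0 .
\]

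\textbf{Generating functions.} I will verify this identity by comparing two-variable generating functions. By Lemma~\ref{lem:norm-convergence-generating}, the series for $Q$ and $H$ converge in norm, so inner products may be taken termwise. For $s,r\in\C$ this gives
\[
\sum_{m,n}\frac{s^mr^n}{m!\,n!}\,\frac{\rho^m\overline\rho^{\,n}}{\|q\|^2}\ip{q_m}{q_n}
=\frac{1}{\|q\|^2}\ip{Q(\rho s,\cdot)}{Q(\overline{\rho r}^{\,},\cdot)}
\]
and
\[
\sum_{m,n}\frac{s^mr^n}{m!\,n!}\ip{h_n}{h_m}=\ip{H(r,\cdot)}{H(\overline{s},\cdot)}.
\]
Here $Q(\overline{\rho r},\cdot)$ equals $Q(\overline{\overline\rho\, r}\cdot\,\overline{\phantom{x}}...)$ in the sense that we substitute $(s,r)\mapsto(\rho s,\overline\rho r)$ into \eqref{eq:Q-Gram}.

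Evaluating with \eqref{eq:Q-Gram} and \eqref{eq:H-Gram}, the identity becomes
\[
\frac cD(\rho^2s^2+\overline\rho^{\,2}r^2)+\frac{|\rho|^2}{D}sr+L\rho s+\overline L\,\overline\rho r
= sr-sb-r\overline b-c(s^2+r^2).
\]
Both sides are entire in $(s,r)$. By uniqueness of power series coefficients, it suffices to check three conditions:
\[
|\rho|^2=D,\qquad L\rho=-b,\qquad c\rho^2=-cD .
\]
The fourth condition is the conjugate of the second.

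\textbf{Case $b=0$.} Then $L=0$ and $L\rho=-b$ holds trivially.
- If $c=0$, then $D=1$ and $\rho=1$ works.
- If $c>0$, then $\rho=i\sqrt D$ gives $|\rho|^2=D$ and $\rho^2=-D$.

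\textbf{Case $b\neq0$.} Here $\rho=-b/L$, so $L\rho=-b$ holds by definition.

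The condition $|\rho|^2=D$ is equivalent to $|L|^2=|b|^2/D$. As in the proof of Lemma~\ref{lm:main}, this is equivalent to
\[
4c\bigl((1+2c)b_1^2-(1-2c)b_2^2\bigr)=0,
\]
which is exactly \eqref{eq:main-condition}.

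It remains to show $\rho^2=-D$ when $c>0$; I expect this step to be the main obstacle. Using $L=(\overline b+2cb)/D$, we need $b^2D=-(\overline b+2cb)^2$. Put $w=\overline b/b$, which is unimodular; the condition becomes $(w+2c)^2=-D$, i.e. $w+2c=\pm i\sqrt{1-4c^2}$. Since $|w|=1$, this holds precisely when $\operatorname{Re}w=-2c$. Now
\[
\operatorname{Re}w=\frac{b_1^2-b_2^2}{|b|^2},
\]
so the condition is $b_1^2-b_2^2=-2c(b_1^2+b_2^2)$, i.e. $(1+2c)b_1^2=(1-2c)b_2^2$. This is again \eqref{eq:main-condition}.

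With the Gram identity verified in every case, Lemma~\ref{lem:abstract-gram} yields that the formula \eqref{eq:J-definition} extends uniquely to a conjugation on $\F$.
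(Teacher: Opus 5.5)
Your proposal is correct and follows essentially the same route as the paper: you apply Lemma~\ref{lem:abstract-gram} with $x_n=q_n$, $y_n=h_n$, match generating functions via \eqref{eq:Q-Gram} and \eqref{eq:H-Gram}, and reduce to the identities $|\rho|^2=D$, $L\rho=-b$, $c\rho^2=-cD$, checked case by case; your unimodular substitution $w=\overline b/b$ is only a tidier version of the paper's direct check $(\overline b+2cb)^2=-Db^2$. One cosmetic slip: the second argument should be $Q(\rho\overline r,\cdot)=Q(\overline{\overline\rho r},\cdot)$, not $Q(\overline{\rho r},\cdot)$, though the substitution $(s,r)\mapsto(\rho s,\overline\rho r)$ that you actually carry out is the right one.
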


\begin{proof}
We divide the proof into two steps.

\medskip
\noindent\emph{Step 1.}
We first prove that, for all $r,s\in\C$,
\begin{equation}\label{eq:exponent-match}
\begin{aligned}
\frac{c}{D}\bigl(\rho^2s^2+\overline\rho^{\,2}r^2\bigr)
   +\frac{|\rho|^2}{D}sr
   +L\rho s+\overline L\,\overline\rho r 
=sr-sb-r\overline b-c(s^2+r^2).
\end{aligned}
\end{equation}
If $c=0$, then $D=1$ and $L=\overline b$.  By the choice of
$\rho$ in \eqref{eq:rho},
\[
  |\rho|=1,
  \qquad
 L \rho =-b,
\]
and \eqref{eq:exponent-match} follows immediately.

Now suppose that $c>0$.  If $b=0$, then $L=0$ and
$\rho=i\sqrt D$, so
\[
  \rho L=-b,
  \qquad
  |\rho|^2=D,
  \qquad
  \rho^2=-D.
\]
If $b\ne0$, then \eqref{eq:main-condition} gives
  $(1+2c)b_1^2=(1-2c)b_2^2.$
Since
$\overline b+2cb=(1+2c)b_1-i(1-2c)b_2,$
a direct calculation yields
\[
  (\overline b+2cb)^2=-Db^2.
\]
Using $L=(\overline b+2cb)/D$ and
$\rho=-Db/(\overline b+2cb)$, we obtain
\[
  \rho L=-b,
  \qquad
  \rho^2=-D,
  \qquad
  |\rho|^2=D.
\]
Thus, in every case, substituting these identities and their conjugates
into the left-hand side of \eqref{eq:exponent-match} proves
\eqref{eq:exponent-match}.

\medskip
\noindent\emph{Step 2.}
We prove that
\begin{equation}\label{eq:gram-match}
  \ip{\lambda_mq_m}{\lambda_nq_n}=\ip{h_n}{h_m},
  \qquad m,n\geq0.
\end{equation}
By \eqref{eq:lambda-n}, \eqref{eq:Q-Gram}, and Step~1,
\begin{align*}
 &\sum_{m,n\geq0}
 \frac{\ip{\lambda_mq_m}{\lambda_nq_n}}{m!n!}s^mr^n\\
 &=\frac1{\|q\|^2}
   \ip{Q(\rho s,\cdot)}{Q(\rho\overline r,\cdot)}\\
&=\exp\left\{\frac{c}{D}\bigl(\rho^2s^2+\overline\rho^{\,2}r^2\bigr)
   +\frac{|\rho|^2}{D}sr
   +L\rho s+\overline L\,\overline\rho r\right\}\\
 &=\exp\{sr-sb-r\overline b-c(s^2+r^2)\}.
\end{align*}
On the other hand, \eqref{eq:H-Gram} gives
\[
  \sum_{m,n\geq0}
  \frac{\ip{h_n}{h_m}}{m!n!}s^mr^n
  =\ip{H(r,\cdot)}{H(\overline s,\cdot)}
  =\exp\{sr-sb-r\overline b-c(s^2+r^2)\}.
\]
Comparing the coefficients of $s^mr^n$ proves
\eqref{eq:gram-match}.

By \eqref{eq:biorthogonal}, the two complete families
$\{q_n\}$ and $\{h_n\}$ are biorthogonal with
$\kappa_n=n!$.  Equation~\eqref{eq:gram-match} is exactly the hypothesis
of Lemma~\ref{lem:abstract-gram}, with
$x_n=q_n,y_n=h_n.$
Consequently, \eqref{eq:J-definition} extends uniquely to a conjugation on
$\F$.
\end{proof}

\begin{lemma}\label{J}
Let \(J\) be the conjugation constructed in Lemma~\ref{lem:construction}.
For every \(f\in\mathcal F^{2}\) and \(z\in\C\),
\begin{align}\label{J-integral-representation}
(Jf)(z)
=
\frac{q(z)}{\pi\|q\|}
\int_{\C}
q(w)\e^{\rho zw}\overline{f(w)}
\e^{-|w|^{2}} dA(w).
\end{align}
Moreover, for each fixed \(z\in\C\), the integral above is absolutely
convergent.
\end{lemma}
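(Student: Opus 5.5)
The plan is to compare both sides of \eqref{J-integral-representation} on test functions. I define the right-hand side as an anti-linear operator $\widetilde J$. I show it agrees with $J$ on the reproducing kernels $K_{\bar t}$, or equivalently on the generating functions $H(t,\cdot)$. Then I extend to all of $\F$ by density and continuity of point evaluation.

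\textbf{Absolute convergence.} For fixed $z$, the integrand is bounded by $|q(w)|\,|\e^{\rho zw}|\,|f(w)|\e^{-|w|^2}$. By Cauchy--Schwarz in $L^2(\e^{-|w|^2}dA)$, it suffices that $q(w)\e^{\rho zw}\in\F$ as a function of $w$. This holds because it equals $Q(\rho z,w)$, which lies in $\F$ by Lemma~\ref{lem:norm-convergence-generating}. The same bound shows that $f\mapsto(\widetilde Jf)(z)$ is a bounded anti-linear functional. Its norm is at most $|q(z)|\,\|Q(\rho z,\cdot)\|/\|q\|$, which is locally bounded in $z$ by \eqref{eq:Q-Gram}. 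In fact
\[
(\widetilde Jf)(z)=\frac{q(z)}{\|q\|}\,\overline{\ip{f}{Q(\overline{\rho z},\cdot)}}.
\]

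\textbf{Identification on generating functions.} Take $f=H(t,\cdot)$. By \eqref{eq:J-definition} and the norm convergence in Lemma~\ref{lem:norm-convergence-generating}, together with continuity of the anti-linear isometry $J$,
\[
JH(t,\cdot)=\sum_n\frac{\bar t^{\,n}}{n!}\lambda_nq_n=\frac{1}{\|q\|}Q(\rho\bar t,\cdot).
\]
On the other side, Lemma~\ref{lem:gram} gives $\ip{H(t,\cdot)}{Q(\bar r,\cdot)}=\e^{tr}$. Setting $\bar r=\overline{\rho z}$, i.e.\ $r=\rho z$, yields
\[
(\widetilde J H(t,\cdot))(z)=\frac{q(z)}{\|q\|}\,\overline{\e^{t\rho z}}=\frac{q(z)}{\|q\|}\e^{\bar t\bar\rho\bar z}.
\]
This does not look like $Q(\rho\bar t,z)/\|q\|=q(z)\e^{\rho\bar tz}/\|q\|$, so the bookkeeping of conjugates needs care. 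Redoing it directly, $\overline{H(t,w)}=\e^{\bar t(\bar w-b)-c\bar t^2}$. The integral $\frac1\pi\int q(w)\e^{\rho zw}\e^{\bar t\bar w}\e^{-|w|^2}dA$ is, by the reproducing property, the value at $\bar t$ of $w\mapsto q(w)\e^{\rho zw}$. That value is $q(\bar t)\e^{\rho z\bar t}=\e^{c\bar t^2+b\bar t}\e^{\rho z\bar t}$. Multiplying by $\e^{-\bar tb-c\bar t^2}$ gives $\e^{\rho z\bar t}$. Hence
\[
(\widetilde JH(t,\cdot))(z)=\frac{q(z)\e^{\rho\bar tz}}{\|q\|}=(JH(t,\cdot))(z).
\]
So the earlier expression via the inner product must be written as $\frac{q(z)}{\|q\|}\ip{Q(\rho z,\cdot)}{f}$; I will use this form throughout.

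\textbf{Extension.} Since $H(t,\cdot)=\e^{-t\bar b-ct^2}K_{\bar t}$, the two anti-linear maps agree on all kernels $K_w$, and hence on their span, which is dense in $\F$. For fixed $z$, both $f\mapsto(Jf)(z)$ and $f\mapsto(\widetilde Jf)(z)$ are continuous. The first is continuous because $J$ is isometric and point evaluation is bounded; the second by the Cauchy--Schwarz bound above. So they agree for all $f$. The main obstacle is purely the conjugation bookkeeping in the middle step: keeping track of which variable is conjugated when applying the reproducing property and Lemma~\ref{lem:gram}. I will settle it by the direct reproducing-kernel computation rather than by quoting the Gram identity.
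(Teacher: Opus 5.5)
Your proof is correct. Its first half matches the paper's: both compute $JH(t,\cdot)=Q(\rho\bar t,\cdot)/\|q\|$ from \eqref{eq:J-definition} and the norm convergence of the generating series. Both also obtain absolute convergence from Cauchy--Schwarz, because $w\mapsto q(w)\e^{\rho zw}$ lies in $\F$.

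The difference is in how the formula passes from kernels to arbitrary $f$. The paper converts $JH(\bar w,\cdot)$ into $(JK_w)(z)=q(z)q(w)\e^{\rho zw}/\|q\|$. It then writes $(Jf)(z)=\ip{Jf}{K_z}=\ip{JK_z}{f}$, which uses that $J$ is a conjugation, i.e.\ $\ip{J\bm x}{J\bm y}=\ip{\bm y}{\bm x}$ and $J^2=I$. The integral formula then holds for every $f$ at once, with no density argument and no evaluation of the integral on test functions. You instead define the integral operator $\widetilde J$ independently and evaluate it on $H(t,\cdot)$ by the reproducing property. You then conclude by density of the kernels and continuity of $f\mapsto(Jf)(z)$ and $f\mapsto(\widetilde Jf)(z)$. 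Your argument uses only that $J$ is a bounded anti-linear map with the prescribed values on the $h_n$, so it identifies any such extension with $\widetilde J$. The paper's route is shorter because it exploits the conjugation structure already established in Lemma~\ref{lem:construction}.

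On your false start: the mismatch came from your first displayed identity, not from Lemma~\ref{lem:gram}. The correct form is $(\widetilde Jf)(z)=\frac{q(z)}{\|q\|}\ip{Q(\rho z,\cdot)}{f}$. With it, Lemma~\ref{lem:gram} (taking $\bar r=\rho z$) gives $\ip{Q(\rho z,\cdot)}{H(t,\cdot)}=\overline{\e^{t\overline{\rho z}}}=\e^{\rho\bar t z}$ directly, which agrees with your reproducing-kernel computation. In a final write-up, omit the incorrect formula instead of stating it and retracting it.
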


\begin{proof}
We first determine the action of \(J\) on the reproducing kernels.

Since \(J\) is conjugate-linear and continuous, the generating function
expansion of \(H\) gives
\begin{align}
\bigl(JH(t,\cdot)\bigr)(z)
&=
J\left(
\sum_{n=0}^{\infty}\frac{t^{n}}{n!}h_n
\right)(z) \notag\\
&=
\sum_{n=0}^{\infty}
\frac{\overline t^{\,n}}{n!}(Jh_n)(z) \notag\\
&=
\frac{1}{\|q\|}
\sum_{n=0}^{\infty}
\frac{(\rho\overline t)^{n}}{n!}q_n(z)
\qquad\text{by \eqref{eq:J-definition}} \notag\\
&=
\frac{q(z)}{\|q\|}
\sum_{n=0}^{\infty}
\frac{(\rho\overline t z)^{n}}{n!} \notag\\
&=
\frac{q(z)}{\|q\|}
\e^{\rho\overline t z}.
\label{eq:JH-generating}
\end{align}
Substituting
\(t=\overline w\) into the definition of \(H\), we obtain
\begin{align*}
H(\overline w,z)
=
\exp\left\{
\overline w(z-\overline b)-c\overline w^{\,2}
\right\}
=
\e^{z\overline w}
\e^{-\overline w\,\overline b-c\overline w^{\,2}}
=
K_w(z)
\e^{-\overline w\,\overline b-c\overline w^{\,2}}.
\end{align*}
Consequently,
\begin{equation}\label{eq:kernel-H-relation}
K_w(z)
=
\e^{\overline w\,\overline b+c\overline w^{\,2}}
H(\overline w,z).
\end{equation}

Applying \(J\) to \eqref{eq:kernel-H-relation} and using its
conjugate-linearity, we obtain
\begin{align*}
JK_w
=
J\left(
\e^{\overline w\,\overline b+c\overline w^{\,2}}
H(\overline w,\cdot)
\right)
=
\overline{
\e^{\overline w\,\overline b+c\overline w^{\,2}}
}
\,JH(\overline w,\cdot).
\end{align*}
Since \(c\in\R\),
$
\overline{
\e^{\overline w\,\overline b+c\overline w^{\,2}}
}
=
\e^{wb+cw^{2}}.
$
Setting \(t=\overline w\) in
\eqref{eq:JH-generating} gives
$
\bigl(JH(\overline w,\cdot)\bigr)(z)
=
\frac{q(z)}{\|q\|}
\e^{\rho wz}.$
It follows that
\begin{align*}
(JK_w)(z)
=
\e^{wb+cw^{2}}
\frac{q(z)}{\|q\|}
\e^{\rho wz}
=
\frac{q(z)}{\|q\|}
\exp\{wb+cw^{2}+\rho wz\}.
\end{align*}
Thus,
\begin{equation*}
(JK_w)(z)
=
\frac{q(z)}{\|q\|}
\exp\{wb+cw^{2}+\rho wz\}=\frac{q(z)q(w)}{\|q\|}
\e^{\rho zw}.
\end{equation*}
Fix
\(f\in\mathcal F^{2}\) and \(z\in\C\). By the reproducing property,
\[
(Jf)(z)=\langle Jf,K_z\rangle=\langle JK_z,f\rangle.
\]
Therefore,
\begin{align*}
(Jf)(z)
&=
\langle JK_z,f\rangle\\
&=
\frac{1}{\pi}
\int_{\C}
(JK_z)(w)\overline{f(w)}
\e^{-|w|^{2}}\dd A(w)\\
&=
\frac{1}{\pi}
\int_{\C}
\frac{q(z)q(w)}{\|q\|}
\e^{\rho zw}\overline{f(w)}
\e^{-|w|^{2}}\dd A(w)\\
&=
\frac{q(z)}{\pi\|q\|}
\int_{\C}
q(w)\e^{\rho zw}\overline{f(w)}
\e^{-|w|^{2}}\dd A(w).
\end{align*}
Since \(JK_z\in\mathcal F^{2}\) and
\(f\in\mathcal F^{2}\), the Cauchy--Schwarz inequality yields
\begin{align*}
&\frac{1}{\pi}
\int_{\C}
\left|
(JK_z)(w)\overline{f(w)}
\right|
\e^{-|w|^{2}}\dd A(w)\\
&\qquad\leq
\left(
\frac{1}{\pi}
\int_{\C}
|(JK_z)(w)|^{2}
\e^{-|w|^{2}}\dd A(w)
\right)^{1/2}
\left(
\frac{1}{\pi}
\int_{\C}
|f(w)|^{2}
\e^{-|w|^{2}}\dd A(w)
\right)^{1/2}\\
&\qquad=
\|JK_z\|\,\|f\|<\infty.
\end{align*}
Therefore, the integral is absolutely convergent.
\end{proof}
\begin{theorem}\label{thm:main}
Let \(W_{\psi,\varphi}\) be a bounded weighted composition operator of the
form \eqref{eq:in-reduced-form}. 
Then \(W_{\psi,\varphi}\) is
complex symmetric if and only if either
\begin{equation}
\begin{gathered}
c=0;\\
\text{or}\quad
0<c<\frac12
\quad\text{and}\quad
\frac{(\operatorname{Re}b)^2}{1-2c}
=
\frac{(\operatorname{Im}b)^2}{1+2c}.
\end{gathered}
\end{equation}
In either case, \(W_{\psi,\varphi}\) is complex symmetric with respect
to the conjugation \(J\) defined by
\[
(Jf)(z)
=
\frac{q(z)}{\pi\|q\|}
\int_{\C}
q(w)\e^{\rho zw}\overline{f(w)}
\e^{-|w|^{2}}\dd A(w),
\qquad f\in\F.
\]
\end{theorem}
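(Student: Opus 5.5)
Necessity is exactly Lemma~\ref{lm:main}: if \(W_{\psi,\varphi}\) is complex symmetric, then either \(c=0\) or \(0<c<\frac12\) and \((\operatorname{Re}b)^2/(1-2c)=(\operatorname{Im}b)^2/(1+2c)\). So the plan is to prove sufficiency. Assume the condition holds and let \(J\) be the conjugation of Lemma~\ref{lem:construction}. By Lemma~\ref{J}, \(J\) is given by the displayed integral formula, so it only remains to verify
\[
JW_{\psi,\varphi}^*=W_{\psi,\varphi}J .
\]

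Both sides are anti-linear and bounded, so it suffices to check the identity on the dense set \(\operatorname{span}\{h_n\}\) (Lemma~\ref{dense}), and by anti-linearity on each \(h_n\) separately. By Theorem~\ref{lem:eigenvectors}, \(W_{\psi,\varphi}^*h_n=\bar a^{\,n}h_n\). Since \(J\) is anti-linear,
\[
JW_{\psi,\varphi}^*h_n=\overline{\bar a^{\,n}}\,Jh_n=a^n\lambda_nq_n .
\]
On the other side, \(Jh_n=\lambda_nq_n\) by \eqref{eq:J-definition}, and \(W_{\psi,\varphi}q_n=a^nq_n\), so
\[
W_{\psi,\varphi}Jh_n=\lambda_na^nq_n .
\]
The two sides agree for every \(n\). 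The difference \(JW_{\psi,\varphi}^*-W_{\psi,\varphi}J\) is a continuous anti-linear operator vanishing on a dense subspace, hence it vanishes identically. Therefore \(W_{\psi,\varphi}=JW_{\psi,\varphi}^*J\) using \(J^2=I\), i.e., \(W_{\psi,\varphi}\) is \(J\)-symmetric.

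There is no real obstacle left: the hard work (the Gram-matrix matching and the construction of \(J\)) was done in Lemma~\ref{lem:construction}. The only point needing care is the extension from the dense span to all of \(\F\). This uses continuity of \(J\), which is an isometry, together with boundedness of \(W_{\psi,\varphi}\) and \(W_{\psi,\varphi}^*\).
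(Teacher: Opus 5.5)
Your proposal is correct and follows the paper's proof essentially step for step. Necessity comes from Lemma~\ref{lm:main}; sufficiency comes from checking $JW_{\psi,\varphi}^*h_n=a^n\lambda_nq_n=W_{\psi,\varphi}Jh_n$ for the conjugation of Lemma~\ref{lem:construction} and extending by density and continuity, with Lemma~\ref{J} supplying the integral formula for $J$.
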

\begin{proof}
Lemma \ref{lm:main} establishes the necessity.
Let \(J\) be the conjugation constructed in Lemma~\ref{lem:construction}.
For the sufficiency, \eqref{eq:adjoint-eigenvectors} and
\(W_{\psi,\varphi}q_n=a^nq_n\) yield
\[
  JW_{\psi,\varphi}^*h_n
  =J(\overline a^{\,n}h_n)
  =a^n\lambda_nq_n
  =W_{\psi,\varphi}Jh_n.
\]
Both operators are bounded and the \(h_n\)'s have dense linear span.
Therefore \(JW_{\psi,\varphi}^*=W_{\psi,\varphi}J\) on \(\F\).
Thus the sufficiency follows from Lemma~\ref{lem:construction}, and
Lemma~\ref{J} gives the explicit formula for \(J\).
\end{proof}
\begin{corollary}\label{cor:wcg}
Let \(J\) be the conjugation constructed in \eqref{J-integral-representation}.
If \(c=0\), then 
\begin{align}\label{antilinear}
(Jf)(z)
=
\e^{bz-|b|^{2}/2}
\overline{
f\bigl(\overline{b+\rho z}\bigr)
}\qquad f\in\mathcal F^{2}.
\end{align}
\end{corollary}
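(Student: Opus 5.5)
When \(c=0\) we have \(q(z)=\e^{bz}\), \(D=1\), \(L=\overline b\), and \(|\rho|=1\); by Lemma~\ref{lem:moments}, \(\|q\|^2=M=\e^{|b|^2}\), so \(\|q\|=\e^{|b|^2/2}\). The plan is to substitute these values into the integral formula \eqref{J-integral-representation} and recognize the remaining integral as a reproducing-kernel pairing.

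With these values the formula becomes
\[
(Jf)(z)=\frac{\e^{bz-|b|^2/2}}{\pi}\int_{\C}\e^{(b+\rho z)w}\,\overline{f(w)}\,\e^{-|w|^2}\dd A(w).
\]
Write \(\zeta=\overline{b+\rho z}\). Then \(\e^{(b+\rho z)w}=\e^{w\overline\zeta}=K_\zeta(w)\), so the integral equals \(\ip{K_\zeta}{f}\). By the reproducing property,
\[
\ip{K_\zeta}{f}=\overline{\ip{f}{K_\zeta}}=\overline{f(\zeta)}.
\]
This gives exactly \eqref{antilinear}. Absolute convergence of the integral is already guaranteed by Lemma~\ref{J}, or directly because \(K_\zeta\in\F\), so the rewriting is legitimate.

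There is no real obstacle. The only care needed is the value of \(\|q\|\), which comes from Lemma~\ref{lem:moments} with \(A=B=D=1\), \(M=\e^{b_1^2+b_2^2}\), and the conjugation bookkeeping \(\overline{\overline\zeta}=\zeta\).

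As a consistency check, the resulting \(J\) should have the form \(\mathcal C_{\gamma,\tau,\kappa}\) with \(\gamma=\overline\rho\), \(\tau=b\) and \(\kappa=\e^{-|b|^2/2}\); this check is not needed for the proof. The constraint \(|\kappa|^2\e^{|\tau|^2}=1\) holds. The constraint \(\overline\gamma\tau+\overline\tau=\rho b+\overline b=0\) follows from \(\rho L=-b\) with \(L=\overline b\) when \(b\ne0\), and is trivial when \(b=0\).
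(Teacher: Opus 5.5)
Your proof is correct and follows the paper's argument: you substitute $q(z)=\e^{bz}$ and $\|q\|=\e^{|b|^2/2}$ into the integral formula, recognize $\e^{(b+\rho z)w}=K_{\overline{b+\rho z}}(w)$, and apply the reproducing property. One slip in your optional consistency check: matching with $\mathcal C_{\gamma,\tau,\kappa}$ gives $\gamma=\rho$ (not $\overline\rho$), and the constraint to verify is $\overline\rho\, b+\overline b=0$, which is the conjugate of $\rho\overline b=-b$; the version $\rho b+\overline b=0$ you wrote fails in general.
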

\begin{proof}
When \(c=0\), we have \(q(z)=\e^{bz}\), and
\eqref{eq:moments} yields
$\|q\|=\e^{|b|^{2}/2}.$
Hence \eqref{J-integral-representation} reduces to
\begin{equation}\label{eq:J-c-zero-integral}
(Jf)(z)
=
\frac{\e^{bz-|b|^{2}/2}}{\pi}
\int_{\C}
\e^{(b+\rho z)w}\overline{f(w)}
\e^{-|w|^{2}}\dd A(w).
\end{equation}
Since
$
\e^{(b+\rho z)w}
=
K_{\overline{b+\rho z}}(w),
$
the reproducing property gives
\begin{align*}
\frac{1}{\pi}
\int_{\C}
\e^{(b+\rho z)w}\overline{f(w)}
\e^{-|w|^{2}}\dd A(w)
=
\left\langle
K_{\overline{b+\rho z}},f
\right\rangle
=
\overline{f\bigl(\overline{b+\rho z}\bigr)}.
\end{align*}
Substituting this identity into \eqref{eq:J-c-zero-integral}, we obtain
\begin{equation*}
(Jf)(z)
=
\e^{bz-|b|^{2}/2}
\overline{f\bigl(\overline{b+\rho z}\bigr)}.
\end{equation*}
\end{proof}

\begin{remark}\label{re:hai}
If $c=0$, then 
\begin{equation*}
  \rho=
  \begin{cases}
    -b/\bar{b}
    \,
      & b\ne0,\\[3mm]
    1,
      & b=0.
  \end{cases}
\end{equation*}
In this case, the conjugation $J$ defined in \eqref{antilinear} is a
weighted composition conjugation, as characterized in
\cite[Theorem~3.1]{HaiKhoi2016} and
\cite[Theorem~3.13]{HaiKhoi2018}.
In particular, when $b=c=0$, $J$ reduces to the standard conjugation
\begin{align}\label{K}
\mathcal Kf(z)=\overline{f(\overline z)}.
\end{align}

In contrast, when $c>0$, the conjugation $J$ defined in
\eqref{J-integral-representation} is not a weighted composition conjugation. Indeed,
the results in
\cite{HaiKhoi2016,HaiKhoi2018} show that the weight of every such
conjugation on $\F$ must be of the form $\kappa e^{\tau z}$,
whereas
$J1=\frac{q}{\|q\|}
  =\frac{1}{\|q\|}\exp(cz^2+bz)$
has a nonzero quadratic term in its exponent.
\end{remark}

We next make the connection
with canonical integral operators precise in the centered case.

\begin{proposition}\label{prop:canonical-relation}
Assume $b=0$.  Put
\begin{align}\label{st}
s=\frac1\rho,
  \qquad
  t=\frac{2c}{\rho}.
\end{align}
Let $T^{(s,t)}$ be the canonical integral operator of Dong and Zhu \cite{DongZhu2024},
\[
  (T^{(s,t)}f)(z)
  =\frac1{\pi\sqrt s }\int_{\mathbb C}
  \exp\!\left(
    \frac{tz^2-\overline t\,\overline w^{\,2}+2z\overline w}{2s}
  \right)f(w)e^{-|w|^2}\,dA(w),
\]
where either of the two square roots of \(s\) may be used in the prefactor.
Then
\[
  J\mathcal K=\omega T^{(s,t)},
  \qquad
  \omega=\sqrt s/\|q\|,
  \qquad |\omega|=1.
\]
Moreover, $|s|^2=|t|^2+1$, so $T^{(s,t)}$ is unitary.
Changing the choice of \(\sqrt{s}\) changes both \(T^{(s,t)}\) and
\(\omega\) by a sign; it therefore does not affect the identity or the
unitarity conclusion.
\end{proposition}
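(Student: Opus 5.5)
The plan is a direct substitution: compute $J\mathcal K$ from the integral formula of Lemma~\ref{J}, and compare its kernel term by term with that of $T^{(s,t)}$.

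\emph{Computing $J\mathcal K$.} Since $b=0$, we have $q(z)=\e^{cz^2}$. Also $M=1$, so Lemma~\ref{lem:moments} gives $\|q\|^2=1/\sqrt D$. For $f\in\F$,
\[
\overline{(\mathcal Kf)(w)}=f(\overline w).
\]
Lemma~\ref{J} then gives
\[
(J\mathcal Kf)(z)=\frac{\e^{cz^2}}{\pi\|q\|}\int_{\C}\e^{cw^2+\rho zw}f(\overline w)\e^{-|w|^2}\,dA(w).
\]
We make the change of variables $w\mapsto\overline w$. This preserves $dA$ and $\e^{-|w|^2}$, and the integral is absolutely convergent by Lemma~\ref{J}. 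We obtain
\[
(J\mathcal Kf)(z)=\frac{1}{\pi\|q\|}\int_{\C}\exp\bigl(cz^2+c\overline w^{\,2}+\rho z\overline w\bigr)f(w)\e^{-|w|^2}\,dA(w).
\]

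\emph{Matching the exponent.} With $s=1/\rho$ and $t=2c/\rho$ as in \eqref{st}, the three coefficients in the exponent of $T^{(s,t)}$ are
\[
\frac{t}{2s}=c,\qquad \frac{2}{2s}=\rho,\qquad -\frac{\overline t}{2s}=-c\,\frac{\rho}{\overline\rho}.
\]
The first two agree with the exponent above. For the third we need $-c\rho/\overline\rho=c$, which is the main point to check.
\begin{itemize}
\item If $c=0$, the third term vanishes, so there is nothing to check; here $\rho=1$ by \eqref{eq:rho}.
\item If $c>0$, then $\rho=i\sqrt D$ by \eqref{eq:rho}. Hence $\rho/\overline\rho=-1$, and the $\overline w^{\,2}$-coefficients agree.
\end{itemize}
So the two kernels coincide up to their prefactors. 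The prefactors give $J\mathcal K=\omega T^{(s,t)}$ with $\omega=\sqrt s/\|q\|$. This holds for either choice of $\sqrt s$, since flipping the sign changes $T^{(s,t)}$ and $\omega$ simultaneously.

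\emph{Modulus and unitarity.} We have $|s|=1/|\rho|=1/\sqrt D$ (this also holds when $c=0$, since then $D=1$). Hence $|\sqrt s|=D^{-1/4}=\|q\|$, which gives $|\omega|=1$. Next,
\[
|t|^2+1=\frac{4c^2}{D}+1=\frac{4c^2+1-4c^2}{D}=\frac1D=|s|^2 .
\]
Finally, $J\mathcal K$ is a product of two conjugations, so it is linear, isometric and surjective, hence unitary. Therefore $T^{(s,t)}=\overline\omega\,J\mathcal K$ is unitary. This also agrees with Dong and Zhu's criterion $|s|^2=|t|^2+1$.

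The only genuinely delicate step is the $\overline w^{\,2}$-coefficient. It depends on the specific choice $\rho=i\sqrt D$ being purely imaginary. A different admissible $\rho$ with $\rho^2=-D$ would also be purely imaginary, so the identity is robust within \eqref{eq:rho}.
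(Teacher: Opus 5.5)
Your proposal is correct and follows the paper's proof essentially step by step. The shared steps are the change of variables $w\mapsto\overline w$, matching the $z^2$, $z\overline w$ and $\overline w^{\,2}$ coefficients using $\rho=i\sqrt D$ (so $\rho/\overline\rho=-1$), and $\|q\|^2=1/\sqrt D$ giving $|\omega|=1$. The only difference is how you get unitarity: you obtain it directly from $T^{(s,t)}=\overline\omega\,J\mathcal K$, with $J\mathcal K$ a product of two conjugations. The paper instead deduces unitarity from $|s|^2=|t|^2+1$ via Theorem~A of Dong and Zhu. Your route is self-contained and turns that citation into a consistency check.
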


\begin{proof}
When $b=0$, changing variables $w\mapsto\overline w$ gives
\[
  (J\mathcal Kf)(z)
  =\frac1{\|q\|}\int_{\mathbb C}
  \exp\{cz^2+c\overline w^{\,2}+\rho z\overline w\}
  f(w)e^{-|w|^2}\,\frac{dA(w)}{\pi}.
\]
If $c>0$, then \eqref{eq:rho} gives \(\rho=i\sqrt D\), while
\eqref{st} implies that
$
  \frac{t}{2s}=c,
  -\frac{\overline t}{2s}=c.
$
Hence 
\[
cz^2+c\overline w^{\,2}+\rho z\overline w
=\frac{tz^2-\overline t\,\overline w^{\,2}+2z\overline w}{2s}.
\]
The same identities are immediate when $c=0$.  Thus the two kernels agree
up to the scalar $\omega$.  By Lemma \ref{lem:moments}, we have
$\|q\|^2=\frac1{\sqrt D}$. Since $|s|=\frac1{\sqrt D}$,
$|\omega|=\frac{\sqrt{|s|}}{\|q\|}=1$. Equation~\eqref{ABD} implies that
$|s|^2-|t|^2
  =\frac{1-4c^2}{D}=1$. Theorem~A of \cite{DongZhu2024} now shows that
\(T^{(s,t)}\) is unitary.
\end{proof}
The following three examples exhibit complex symmetric but nonnormal
weighted composition operators. The first two are compact and correspond,
respectively, to the cases \(c=0\) and \(0<c<\frac12\) in
Theorem~\ref{thm:main}, whereas the third is noncompact with compact
square and corresponds to the latter case.
\begin{example}
Let
$a=\frac12,\ c=0,\ b=1,$
and set \(u=0\) and \(v=b(1-a)=\frac12\). Then
\begin{align*}
\varphi(z)=\frac12 z,\quad \psi(z)=\exp\left(\frac12 z\right).
\end{align*}
Since $2|u|=0<1-(\frac12)^2,$
Proposition~\ref{prop:explicit-boundedness}\textup{(i)}  implies that
$W_{\psi,\varphi}$ is compact. Since $c=0$, Theorem~\ref{thm:main}
shows that $W_{\psi,\varphi}$ is complex symmetric.
Finally, Proposition~\ref{normal} shows that
$W_{\psi,\varphi}$ is not normal.
\end{example}

\begin{example}
Let
$a=\frac12,\ c=\frac14,\ b=1+i\sqrt3,$
and set \(u=c(1-a^2)=\frac3{16}\) and
\(v=b(1-a)=\frac12+i\frac{\sqrt3}{2}\). Then
\begin{align*}
\varphi(z)=\frac12 z,\quad \psi(z)=\exp\left(\frac3{16}z^2+
  \left(\frac12+i\frac{\sqrt3}{2}\right)z\right).
\end{align*}
Since $2|u|=2\cdot (3/16)<1-1/4,$
Proposition~\ref{prop:explicit-boundedness}\textup{(i)}  implies that
$W_{\psi,\varphi}$ is compact. Moreover,
$
\frac{(\operatorname{Re}b)^2}{1-2c}
=\frac{(\operatorname{Im}b)^2}{1+2c}
=2,
$
so Theorem~\ref{thm:main} shows that
$W_{\psi,\varphi}$ is complex symmetric.
Finally, Proposition~\ref{normal} shows that
$W_{\psi,\varphi}$ is not normal.
\end{example}

\begin{example}
Let $a=\frac{i}{2}, c=\frac{3}{10}, b=1-2i,$
and set
$
  u=c(1-a^2)=\frac38,
  \
  v=b(1-a)=-\frac52 i.
$
Then
\[
  \varphi(z)=\frac{i}{2}z,
  \qquad
  \psi(z)=
  \exp\left(\frac38z^2-\frac52iz\right).
\]

A direct computation gives
$
  2|u|
  =\frac34
  =1-|a|^2,
$
and
$
|u|v+u\overline v
=\frac38\left(-\frac52i\right)
+\frac38\left(\frac52i\right)
=0.
$
Thus, the critical boundedness condition in
Proposition~\ref{prop:explicit-boundedness}\textup{(ii)} is satisfied.
Consequently, \(W_{\psi,\varphi}\) is bounded but not compact.

Moreover, the composition rule for weighted composition operators gives
$
  W_{\psi,\varphi}^2
  =
  W_{\psi_2,\varphi_2},
$
where
$
  \varphi_2(z)=a^2z=-\frac14z
$
and
$
  \psi_2(z)
  =\psi(z)\psi(az)
  =
  \exp(
    \frac{9}{32}z^2+
    \left(\frac54-\frac52i\right)z
  ).
$
Since
$
  2\cdot\frac{9}{32}
  =
  \frac{9}{16}
  <
  \frac{15}{16}
  =
  1-|a^2|^2,
$
Proposition~\ref{prop:explicit-boundedness}\textup{(i)} implies that
\(W_{\psi,\varphi}^2\) is compact.

Furthermore,
$
  0<c=\frac{3}{10}<\frac12
$
and
$
  \frac{(\operatorname{Re}b)^2}{1-2c}
  =
  \frac{(\operatorname{Im}b)^2}{1+2c}
  =\frac52.
$
It follows from Theorem~\ref{thm:main} that
\(W_{\psi,\varphi}\) is complex symmetric.

Finally, \(W_{\psi,\varphi}\) is not normal. Indeed, since
\(K_0\equiv1\), formula~\eqref{eq:adjoint-kernel} gives
$
W_{\psi,\varphi}^*K_0
=\overline{\psi(0)}K_{\varphi(0)}
=K_0.
$
On the other hand,
$
W_{\psi,\varphi}K_0=\psi.
$
Since
\[
  \psi(z)
  =
  \exp\left(\frac38z^2-\frac52iz\right)
  =
  1-\frac52iz+\cdots
\]
and the monomials are mutually orthogonal in \(\mathcal F^2\), we have
\[
\begin{aligned}
  \left\|W_{\psi,\varphi}K_0\right\|^2
  =\|\psi\|^2
  \geq
  1+\left|-\frac52i\right|^2
  =\frac{29}{4}
  >1
  =\left\|W_{\psi,\varphi}^*K_0\right\|^2.
\end{aligned}
\]
It follows that
\(W_{\psi,\varphi}\) is not normal.

Thus, \(W_{\psi,\varphi}\) is a nonnormal, noncompact, complex symmetric
weighted composition operator whose square is compact.
\end{example}

\section{Complete classification}

\begin{theorem}\label{thm:weighted-affine}
Fix $\alpha>0$. Let $\Psi$ and $\Phi$ be entire functions
with $\Psi\not\equiv0$, and suppose that
$W_{\Psi,\Phi}^{(\alpha)}$ is bounded on
$\mathcal F_\alpha^2$, where
\[
  \Phi(z)=az+d,\qquad |a|\leq1.
\]
Then $W_{\Psi,\Phi}^{(\alpha)}$ is complex symmetric if
and only if one of the following conditions holds.

\begin{enumerate}
\item[\textup{(I)}]
$a=0$. In this case,
\[
  W_{\Psi,\Phi}^{(\alpha)}
  =
  \Psi\otimes K_d^{(\alpha)},
\]
and hence $W_{\Psi,\Phi}^{(\alpha)}$ has rank one.

\item[\textup{(II)}]
$|a|=1$ and
\[
  \Psi(z)
  =
  \mu e^{-\alpha a\overline d\,z}
\]
for some $\mu\in\mathbb C\setminus\{0\}$. In this case,
$W_{\Psi,\Phi}^{(\alpha)}$ is normal.

\item[\textup{(III)}]
$0<|a|<1$ and
\[
  \Psi(z)
  =
  \exp\bigl(\beta z^2+\gamma z+\delta\bigr),
  \qquad z\in\mathbb C,
\]
for some \(\beta,\gamma,\delta\in\mathbb C\) with
\( |\beta|<\frac{\alpha}{2}\).
Set
$p=\frac{d}{1-a}.$
If $\beta=0$, take $\eta=1$. If $\beta\neq0$, choose a unimodular
constant $\eta$ such that
$
\frac{\eta^2\beta}{\alpha(1-a^2)}
=|\frac{\beta}{\alpha(1-a^2)}|.
$
Define
\[
  c=\frac{\eta^2\beta}{\alpha(1-a^2)},
  \qquad
  b=\eta\left(
    \frac{2\beta p+\gamma}{1-a}-\alpha\overline p
  \right).
\]
Then either
\begin{enumerate}
\item[\textup{(i)}]
\(\beta=0\); or

\item[\textup{(ii)}]
$0<|\beta|<
\frac{\alpha}{2}|1-a^2|$ and
\begin{equation}\label{eq:weighted-main-condition}
  \frac{(\operatorname{Re}b)^2}{1-2c}
  =
  \frac{(\operatorname{Im}b)^2}{1+2c}.
\end{equation}
\end{enumerate}
\end{enumerate}

In case \textup{(III)}, set
$
D=1-4c^2,\
L=\frac{\overline b+2cb}{D},\
q_\alpha(z)=\exp\bigl(\alpha c z^2+bz\bigr),
$
so that \(D>0\), and \(L\ne0\) whenever \(b\ne0\). Define
\begin{equation}\label{eq:weighted-rho}
  \rho=
  \begin{cases}
    -\dfrac{b}{L},
      & b\ne0,\\[3mm]
    1,
      & b=0,\ c=0,\\[1mm]
    i\sqrt D,
      & b=0,\ c>0.
  \end{cases}
\end{equation}
Then \(q_\alpha\in\mathcal F_\alpha^2\). For
$f\in\mathcal F_\alpha^2,$
the conjugation implementing the
complex symmetry of
$W_{\Psi,\Phi}^{(\alpha)}$ is given by
\begin{equation}\label{C-alpha}
\begin{aligned}
(C_\alpha f)(z)
={}&
\frac{
  \alpha
  e^{\alpha\overline pz-\alpha|p|^2}
  q_\alpha\bigl(\overline\eta(z-p)\bigr)
}{
  \pi\|q_\alpha\|_\alpha
}
\\
&\times
\int_{\mathbb C}
q_\alpha(w)
\exp\!\left\{
  \alpha\rho\overline\eta(z-p)w
  -\alpha p\overline\eta\,\overline w
\right\}
\overline{f(\eta w+p)}
e^{-\alpha|w|^2}\,\dd A(w).
\end{aligned}
\end{equation}
\end{theorem}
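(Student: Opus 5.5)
The plan is to transport everything to \(\F\) through the dilation \(D_\alpha\) and then follow the reduction of Section~2 with the constants tracked. The operator \(W_{\Psi,\Phi}^{(\alpha)}\) is unitarily equivalent to \(W_{\psi_1,\varphi_1}\) on \(\F\), where
\[
\psi_1(z)=\Psi(z/\sqrt\alpha),\qquad \varphi_1(z)=az+\sqrt\alpha\, d,
\]
so complex symmetry is preserved. By Proposition~\ref{prop:boundedness}, the case split by \(|a|\) is exhaustive.

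\emph{Cases (I) and (II).} If \(a=0\), Lemma~\ref{rank1} applies directly. If \(|a|=1\), Proposition~\ref{prop:boundedness} forces
\[
\psi_1(z)=\eta e^{-a\sqrt\alpha\,\overline d\,z},
\]
and undoing the dilation gives \(\Psi(z)=\eta e^{-\alpha a\overline d z}\). Proposition~\ref{normal} then gives normality, hence complex symmetry.

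\emph{Case (III), necessity of the form of \(\Psi\).} For \(0<|a|<1\), Proposition~\ref{prop:cs-symbol-form} shows that complex symmetry forces
\[
\psi_1(z)=\exp(\beta' z^2+\gamma' z+\delta),\qquad |\beta'|<\tfrac12 .
\]
Since \(\beta=\alpha\beta'\) and \(\gamma=\sqrt\alpha\,\gamma'\), this gives \(|\beta|<\alpha/2\).

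\emph{Case (III), the reduction.} Apply Lemma~\ref{lem:reduction} with
\[
p'=\sqrt\alpha\, d/(1-a)=\sqrt\alpha\, p .
\]
This produces \(u=\beta'\eta^2\) and \(v=\eta(2\beta'p'+\gamma'+(a-1)\overline{p'})\), and hence
\[
c=\frac{u}{1-a^2}=\frac{\eta^2\beta}{\alpha(1-a^2)},\qquad
\frac{v}{1-a}=\sqrt\alpha\,\eta\left(\frac{2\beta p+\gamma}{1-a}-\alpha\overline p\right).
\]
Here is a point to handle carefully. The \(b\) of the reduced problem equals \(\sqrt\alpha\) times the \(b\) in the statement. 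The condition \((\operatorname{Re}b)^2/(1-2c)=(\operatorname{Im}b)^2/(1+2c)\) is homogeneous in \(b\), so it is unaffected by this rescaling. I will check whether the statement's \(b\) and \(q_\alpha\) are meant in \(\mathcal F_\alpha^2\)-coordinates, where \(q_\alpha(z)=q(\sqrt\alpha z)\) would require the linear coefficient \(\sqrt\alpha\cdot b_{\mathrm{red}}\), and reconcile the scalings accordingly.

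Also, \(c\ge0\), and by Lemma~\ref{lem:reduced-c-range} complex symmetry gives \(c<1/2\), i.e. \(|\beta|<\frac{\alpha}{2}|1-a^2|\). Conversely, if \(\beta=0\), or if that inequality holds, then \(q\in\F\) and the standing notation of Subsection~\ref{subsec:standing-notation} applies. Theorem~\ref{thm:main} then gives the equivalence: \(c=0\) corresponds exactly to \(\beta=0\), and otherwise the ellipse-type condition \eqref{eq:weighted-main-condition} is required. For the side claims, \(D>0\) because \(0\le c<1/2\). If \(b\ne0\), then \(L\ne0\), since \(\overline b+2cb=(1+2c)b_1-i(1-2c)b_2\) vanishes only when \(b=0\).

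\emph{The conjugation formula.} The conjugation is obtained by transport:
\[
C_\alpha=D_\alpha^{-1}U_pR_{\overline\eta}\,J\,R_\eta U_{-p}D_\alpha,
\]
with \(p\) rescaled appropriately. This is a conjugation because conjugating a conjugation by a unitary yields a conjugation, and it implements complex symmetry because the unitary equivalence intertwines \(T\) and \(T^*\) simultaneously, with the scalar \(\psi_2(p)\) harmless by Lemma~\ref{nozero}.

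To derive \eqref{C-alpha}, I will write out \(D_\alpha^{-1}U_pR_{\overline\eta}\) on the output side and \(R_\eta U_{-p}D_\alpha\) on the input side, starting from the integral formula of Lemma~\ref{J}. Then I will change variables \(w\mapsto\sqrt\alpha\,w\), which produces the factor \(\alpha/\pi\) and the weight \(e^{-\alpha|w|^2}\). The input side contributes \(\overline{f(\eta w+p)}\) together with the conjugate of the Weyl factor \(k_{-p}\). The output side contributes \(e^{\alpha\overline p z-\alpha|p|^2/2}\) and \(q_\alpha(\overline\eta(z-p))\).

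The main obstacle is this final bookkeeping of the Weyl-operator factors, in particular assembling the \(|p|^2\) terms into \(e^{-\alpha|p|^2}\) and the conjugated factor into \(e^{-\alpha p\overline\eta\,\overline w}\). I would check the result by verifying it against \(f=K_w^{(\alpha)}\), or against the case \(p=0\), \(\eta=1\), \(\alpha=1\), where it must reduce to Lemma~\ref{J}. Along the way I must also confirm that the norm constant \(\|q\|\) transforms correctly into \(\|q_\alpha\|_\alpha\).
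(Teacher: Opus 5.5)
Your outline is the paper's proof: dilate to \(\F\), split by \(|a|\), and in case (III) invoke Proposition~\ref{prop:cs-symbol-form}, Lemma~\ref{lem:reduction}, Lemma~\ref{lem:reduced-c-range} and Theorem~\ref{thm:main}. The conjugation is then obtained as \(D_\alpha^{-1}U_{\sqrt\alpha p}R_{\overline\eta}JR_\eta U_{-\sqrt\alpha p}D_\alpha\), followed by the substitution \(w\mapsto\sqrt\alpha\,w\).

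The one real error is your value of the reduced linear parameter. With \(\beta'=\beta/\alpha\), \(\gamma'=\gamma/\sqrt\alpha\), \(p'=\sqrt\alpha\,p\),
\[
\frac{v}{1-a}
=\eta\Bigl(\frac{2\beta'p'+\gamma'}{1-a}-\overline{p'}\Bigr)
=\frac{\eta}{\sqrt\alpha}\Bigl(\frac{2\beta p+\gamma}{1-a}-\alpha\overline p\Bigr)
=\frac{b}{\sqrt\alpha},
\]
not \(\sqrt\alpha\,b\).

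Consequently there is nothing to reconcile. The reduced function is \(q(z)=\exp(cz^2+bz/\sqrt\alpha)\), so \(q(\sqrt\alpha\,w)=q_\alpha(w)\) exactly. The same substitution gives \(\|q\|=\|q_\alpha\|_\alpha\). The reduced \(L\) equals \(L/\sqrt\alpha\), so \eqref{eq:rho} for the reduced problem returns exactly the \(\rho\) of \eqref{eq:weighted-rho} in all three cases.

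The characterization itself is unaffected, by the homogeneity you noted. But had you carried \(\sqrt\alpha\,b\) into the transport, the kernel would contain \(\exp(\alpha cw^2+\alpha bw)\) instead of \(q_\alpha(w)\), and you would wrongly conclude that \eqref{C-alpha} is mis-scaled. With the corrected value, the Weyl-factor bookkeeping you describe goes through exactly as in the paper: the two factors \(e^{-\alpha|p|^2/2}\) combine, and the conjugated factor gives \(e^{-\alpha p\overline\eta\,\overline w}\).
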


\begin{proof}
Let
\[
  D_\alpha:\mathcal F_\alpha^2\longrightarrow\F,
  \qquad
  (D_\alpha f)(z)=f\left(\frac z{\sqrt\alpha}\right).
\]
Set
\begin{align}\label{az}
\widetilde\psi(z)=\Psi\left(\frac z{\sqrt\alpha}\right),\quad
\widetilde\varphi(z)=az+\sqrt\alpha\,d. 
\end{align}
As observed in Section~2, $D_\alpha$ is unitary and
\begin{equation}\label{eq:weighted-affine-dilation}
 D_\alpha W_{\Psi,\Phi}^{(\alpha)}D_\alpha^{-1}
  =W_{\widetilde\psi,\widetilde\varphi}.
\end{equation}
Thus $W_{\Psi,\Phi}^{(\alpha)}$ is complex symmetric if and only if
$W_{\widetilde\psi,\widetilde\varphi}$ is complex symmetric.

If $a=0$, then
boundedness gives
\(\Psi=W_{\Psi,\Phi}^{(\alpha)}1\in\mathcal F_\alpha^2\).
Lemma~\ref{rank1} therefore gives
\[
  W_{\Psi,\Phi}^{(\alpha)}
  =\Psi\otimes K_d^{(\alpha)},
\]
which is complex symmetric. This proves \textup{(I)}.

Suppose that $|a|=1$. Proposition~\ref{prop:boundedness}, applied to
$W_{\widetilde\psi,\widetilde\varphi}$, gives
\begin{align}\label{mu}
  \widetilde\psi(z)=\mu e^{-a\sqrt{\alpha}\,\overline d\,z}
  \quad\text{and}\quad
  \Psi(z)=\mu e^{-\alpha a\overline d\,z}
\end{align}
for some \(\mu\ne0\).
Conversely, this formula and Proposition~\ref{normal} show that
$W_{\widetilde\psi,\widetilde\varphi}$ is normal. By unitary equivalence,
$W_{\Psi,\Phi}^{(\alpha)}$ is also normal and hence complex symmetric. Thus
\textup{(II)} is necessary and sufficient.

Suppose that $0<|a|<1$. First assume that
$W_{\Psi,\Phi}^{(\alpha)}$ is complex
symmetric. Then $W_{\widetilde\psi,\widetilde\varphi}$ is complex symmetric, so
Proposition~\ref{prop:cs-symbol-form} gives
\[
  \widetilde\psi(z)
  =\exp\bigl(\widetilde\beta z^2
  +\widetilde\gamma z+\delta\bigr),
  \qquad
  |\widetilde\beta|<\frac12,
\]
for some $\widetilde\beta,\widetilde\gamma,\delta\in\mathbb C$.
Define
$
  \beta:=\alpha\widetilde\beta,
  \
  \gamma:=\sqrt\alpha\,\widetilde\gamma.
$
Then
\[
  \Psi(z)=\exp\bigl(\beta z^2+\gamma z+\delta\bigr),
  \qquad |\beta|<\frac\alpha2.
\]

We now reduce every bounded operator with a weight of this form. Put
\[
  \widetilde\beta=\frac{\beta}{\alpha},
  \qquad
  \widetilde\gamma=\frac{\gamma}{\sqrt\alpha},
  \qquad
  p=\frac{d}{1-a},
  \qquad
  \widetilde p=\sqrt\alpha\,p,
\]
and
\[
  \psi_2(z)=\exp\bigl(\widetilde\beta z^2
    +\widetilde\gamma z\bigr).
\]
The choice of $\eta$ in the statement is precisely the choice required in
Lemma~\ref{lem:reduction}, since $\alpha>0$. Applying that lemma to
$\psi_2$ and using $\widetilde\psi=e^\delta\psi_2$ gives
\begin{equation}\label{eq:weighted-affine-reduction}
  W_{\widetilde\psi,\widetilde\varphi}
  =\widetilde\psi(\widetilde p)\,
   V W_{\psi_0,\varphi_0}V^*,
  \qquad
  V=U_{\widetilde p}R_{\overline\eta},
\end{equation}
where
\[
  \varphi_0(z)=az,
  \qquad
  \psi_0(z)=\exp(uz^2+vz),
\]
with
\[
  u=\eta^2\widetilde\beta,
  \qquad
  v=\eta\left(
    2\widetilde\beta\widetilde p+\widetilde\gamma
    +(a-1)\overline{\widetilde p}
  \right).
\]
Here \(V\) is unitary and
\[
  \widetilde\psi(\widetilde p)=\Psi(p)\ne0.
\]
Thus \eqref{eq:weighted-affine-dilation},
\eqref{eq:weighted-affine-reduction}, and Lemma~\ref{nozero} show that
$W_{\Psi,\Phi}^{(\alpha)}$ is complex symmetric if and only if
$W_{\psi_0,\varphi_0}$ is complex symmetric. The reduced parameters are
\[
  \widetilde{c}
  =\frac{u}{1-a^2}
  =\frac{\eta^2\widetilde\beta}{1-a^2}=c,
  \qquad
  \widetilde b
  =\frac{v}{1-a}
  =\eta\left(
    \frac{2\widetilde\beta\widetilde p+\widetilde\gamma}{1-a}
    -\overline{\widetilde p}
  \right)
  =\frac{b}{\sqrt\alpha}.
\]
Since $\sqrt\alpha>0$, condition \eqref{eq:weighted-main-condition} for
$b$ is equivalent to the corresponding condition in
Theorem~\ref{thm:main} for $\widetilde b$.

By the choice of the unimodular constant \(\eta\),
\[
c
=
\frac{\eta^2\beta}{\alpha(1-a^2)}
=
\left|
\frac{\beta}{\alpha(1-a^2)}
\right|
=
\frac{|\beta|}{\alpha|1-a^2|}.
\]
The same identity remains valid when \(\beta=0\), in which case
\(\eta=1\). Consequently, \(c=0\) if and only if \(\beta=0\).
Moreover, when \(\beta\ne0\), the condition
$0<c<\frac12$
is equivalent to
$0<|\beta|<\frac{\alpha}{2}|1-a^2|.$

Therefore,
Lemma~\ref{lem:reduced-c-range}
and the necessity in Theorem~\ref{thm:main} now give precisely
\textup{(III)(i)} or \textup{(III)(ii)}. 

Conversely, either of those
conditions puts $W_{\psi_0,\varphi_0}$ under the sufficiency part of
Theorem~\ref{thm:main}; hence
\eqref{eq:weighted-affine-dilation} and
\eqref{eq:weighted-affine-reduction} imply that
$W_{\Psi,\Phi}^{(\alpha)}$ is complex symmetric. 

It remains to verify the formula for the implementing conjugation in
case \textup{(III)}. In either subcase, \(0\le c<1/2\); hence
\(q_\alpha\in\mathcal F_\alpha^2\). Define
$
  q(z)=\exp\bigl(cz^2+\widetilde b z\bigr).
$
Moreover,
$
  \widetilde L
  =\frac{\overline{\widetilde b}+2c\widetilde b}{D}
  =\frac{L}{\sqrt\alpha}.
$
Thus \eqref{eq:weighted-rho} gives exactly the same value of $\rho$ as
\eqref{eq:rho} applied to the reduced parameter $\widetilde b$,
including all three cases. Let $J$ be the
conjugation from Theorem~\ref{thm:main} for 
$W_{\psi_0,\varphi_0}$, and set
$
  \widetilde C
  =U_{\widetilde p}R_{\overline\eta}JR_\eta U_{-\widetilde p},
  \
  C_\alpha=D_\alpha^{-1}\widetilde C D_\alpha.
$
By \eqref{eq:weighted-affine-reduction} and
\eqref{eq:weighted-affine-dilation}, $C_\alpha$ implements the complex symmetry
of $W_{\Psi,\Phi}^{(\alpha)}$.

For $g\in\F$, the integral representation
\eqref{J-integral-representation} gives
\begin{equation}\label{for:C}
\begin{aligned}
(\widetilde Cg)(z)
={}&
\frac{
 e^{\overline{\widetilde p}z-|\widetilde p|^2}
 q\bigl(\overline\eta(z-\widetilde p)\bigr)
}{\pi\|q\|}
\\
&\times
\int_{\mathbb C}
q(w)
\exp\!\left\{
  \rho\overline\eta(z-\widetilde p)w
  -\widetilde p\,\overline\eta\,\overline w
\right\}
\overline{g(\eta w+\widetilde p)}e^{-|w|^2}\,\dd A(w).
\end{aligned}
\end{equation}
Indeed, this follows by substituting
$
  \overline{(R_\eta U_{-\widetilde p}g)(w)}
  =\exp\left(
    -\widetilde p\,\overline\eta\,\overline w
    -\frac{|\widetilde p|^2}{2}
  \right)
  \overline{g(\eta w+\widetilde p)}
$
into \eqref{J-integral-representation} and combining the two factors
$e^{-|\widetilde p|^2/2}$.

We now compute the explicit formula for \(C_\alpha\).
Note that
$
q(z)
=\exp\!\left(
cz^2+\frac{b}{\sqrt{\alpha}}z
\right)
=q_\alpha\!\left(\frac{z}{\sqrt{\alpha}}\right),
q(\sqrt{\alpha}\,w)
=q_\alpha(w).
$
The change of
variables \(z=\sqrt{\alpha}\,w\) gives
\[
\begin{aligned}
\|q\|^2
&=\frac{1}{\pi}\int_{\mathbb C}
|q(z)|^2e^{-|z|^2}\,\dd A(z)\\
&=\frac{\alpha}{\pi}\int_{\mathbb C}
|q(\sqrt{\alpha}\,w)|^2e^{-\alpha|w|^2}\,\dd A(w)\\
&=\frac{\alpha}{\pi}\int_{\mathbb C}
|q_\alpha(w)|^2e^{-\alpha|w|^2}\,\dd A(w)\\
&=\|q_\alpha\|_\alpha^2.
\end{aligned}
\]
To expand
$
C_\alpha=D_\alpha^{-1}\widetilde C D_\alpha,
$
note that
$(D_\alpha^{-1}h)(z)=h(\sqrt{\alpha}\,z),$
the preceding integral formula \eqref{for:C} for \(\widetilde C\) yields
\[
\begin{aligned}
(C_\alpha f)(z)
&=(D_\alpha^{-1}\widetilde C D_\alpha f)(z)\\
&=\bigl[\widetilde C(D_\alpha f)\bigr](\sqrt{\alpha}\,z)\\
&={}
\frac{
e^{\overline{\widetilde p}\sqrt{\alpha}z-|\widetilde p|^2}
q\bigl(\overline{\eta}(\sqrt{\alpha}z-\widetilde p)\bigr)
}{
\pi\|q\|
}\\
&\quad\times
\int_{\mathbb C}q(w)
\exp\!\left\{
\rho\overline{\eta}(\sqrt{\alpha}z-\widetilde p)w
-\widetilde p\,\overline{\eta}\,\overline{w}
\right\}\\
&\hspace{3.2cm}\times
\overline{
(D_\alpha f)(\eta w+\widetilde p)
}
e^{-|w|^2}\,\dd A(w).
\end{aligned}
\]
Since $
\widetilde p=\sqrt{\alpha} p,\
\widetilde b=\frac{b}{\sqrt{\alpha}},$
the factors outside the integral satisfy
\[
\begin{aligned}
\overline{\widetilde p}\sqrt{\alpha}z-|\widetilde p|^2
&=\alpha\overline pz-\alpha|p|^2,\\
q\bigl(\overline{\eta}(\sqrt{\alpha}z-\widetilde p)\bigr)
&=q\bigl(\sqrt{\alpha}\,\overline{\eta}(z-p)\bigr)
=q_\alpha\bigl(\overline{\eta}(z-p)\bigr).
\end{aligned}
\]
Moreover,
\[
\begin{aligned}
(D_\alpha f)(\eta w+\widetilde p)
&=
f\!\left(
\frac{\eta w+\widetilde p}{\sqrt{\alpha}}
\right)
=
f\!\left(
\frac{\eta w}{\sqrt{\alpha}}+p
\right),\\
\rho\overline{\eta}(\sqrt{\alpha}z-\widetilde p)w
&=
\sqrt{\alpha}\,
\rho\overline{\eta}(z-p)w,\\
-\widetilde p\,\overline{\eta}\,\overline{w}
&=
-\sqrt{\alpha}\,
p\overline{\eta}\,\overline{w}.
\end{aligned}
\]
Substitution of these identities gives 
\[
\begin{aligned}
(C_\alpha f)(z)
={}&
\frac{
e^{\alpha\overline pz-\alpha|p|^2}
q_\alpha\bigl(\overline{\eta}(z-p)\bigr)
}{
\pi\|q_\alpha\|_\alpha
}\\
&\times
\int_{\mathbb C}q(w)
\exp\!\left\{
\sqrt{\alpha}\,
\rho\overline{\eta}(z-p)w
-\sqrt{\alpha}\,
p\overline{\eta}\,\overline{w}
\right\}\\
&\hspace{3.2cm}\times
\overline{
f\!\left(
\frac{\eta w}{\sqrt{\alpha}}+p
\right)
}
e^{-|w|^2}\,\dd A(w).
\end{aligned}
\]
Make the change of variables
$
w =\sqrt{\alpha}\lambda
$
in the integral. 
Since
$\dd A(w)=\alpha\dd A(\lambda),$
\eqref{C-alpha} follows.
\end{proof}
\begin{remark}
When \(\alpha=1\), it follows from
\cite[Theorem~3.15]{HaiKhoi2016} and Remark~\ref{re:hai}
that the operators described in Case~\textup{(II)} and
Case~\textup{(III)(i)} of Theorem~\ref{thm:weighted-affine}
are complex symmetric with respect to weighted composition
conjugations of the form \eqref{f:C}.
\end{remark}

Hai and Khoi \cite[Theorem 3.18]{HaiKhoi2016} determined the spectra for
operators symmetric with respect to
\(\mathcal{C}_{\gamma,\tau,\kappa}\) \eqref{f:C}.
In fact, except for the noncompact case with $|a|<1$, the spectra of the 
operators in all the other cases have already been completely characterized 
in the literature; see, for example, \cite{GuoIzuchi2008,Zhao2014}.

We shall use the following consequence of
\cite[Corollary~1.4]{Zhao2014}, obtained via the unitary operator
\[
  D^{-1}_{1/2}:\F\longrightarrow\mathcal F_{1/2}^{2},
  \qquad
  (D^{-1}_{1/2}f)(z)=f\left(\frac z{\sqrt2}\right).
\]
\begin{lemma}\label{zhao}
Suppose that \(a,b\in\mathbb C\) satisfy
$|a|=1
\ \text{and} \
b\ne0.$
Set
$\varphi(z)=az+b$ and
$\psi(z)=k_{-\overline a b}(z)
=\exp(
-a\overline b\,z-\frac{|b|^2}{2}).$
Then \(W_{\psi,\varphi}\) is a unitary weighted composition
operator on $\F$, and
\[
\sigma(W_{\psi,\varphi})
=
\begin{cases}
\displaystyle
\overline{\{\beta a^m:m\geq 0\}},
& a\ne1,\\[2mm]
\mathbb T,
& a=1,
\end{cases}
\]
where, for \(a\ne1\),
$
\beta
=
\exp\left(
\frac{|b|^2}{2}\frac{a+1}{a-1}
\right).
$
Equivalently, if
$
p=\frac{b}{1-a}
$, then
$\beta=\psi(p).$
Here, \(\mathbb T\) denotes the unit circle.
\end{lemma}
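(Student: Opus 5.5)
The plan is to give a short self-contained argument that only uses the Weyl unitaries $U_p$ and the rotations $R_\eta$ from Step~3 of Section~2. This avoids having to transcribe \cite[Corollary~1.4]{Zhao2014} through the dilation $D_{1/2}^{-1}$, though that corollary can serve as a cross-check.

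\emph{Unitarity.} I will factor the operator as $W_{\psi,\varphi}=U_{-\overline a b}R_a$. Indeed, for $p=-\overline a b$,
\[
(U_pR_af)(z)=k_p(z)f(a(z-p))=k_{-\overline a b}(z)f(az+|a|^2b)=\psi(z)f(az+b).
\]
Since $U_p$ and $R_a$ are unitary, so is $W_{\psi,\varphi}$. In particular $\sigma(W_{\psi,\varphi})\subset\mathbb T$.

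\emph{Case $a\ne1$.} I will move the fixed point $p=b/(1-a)$ of $\varphi$ to the origin, using the same algebra as in the proof of Lemma~\ref{lem:reduction} with $\eta=1$, $\beta=0$ and $\gamma=-a\overline b$. That computation is purely algebraic, so the hypothesis $|a|<1$ is not needed there. It gives
\[
U_{-p}W_{\psi,\varphi}U_p=\psi(p)\,W_{e^{vz},az},\qquad v=-a\overline b+(a-1)\overline p .
\]
Using $\overline a=1/a$, one checks that $(a-1)\overline p=(a-1)\overline b/(1-\overline a)=a\overline b$, so $v=0$. Hence $W_{\psi,\varphi}$ is unitarily equivalent to $\psi(p)R_a$. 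The operator $R_a$ is diagonal in the orthonormal basis $\{e_n\}$ with eigenvalues $a^n$. A diagonal normal operator has spectrum equal to the closure of its eigenvalues, so
\[
\sigma(W_{\psi,\varphi})=\psi(p)\,\overline{\{a^m:m\ge0\}}=\overline{\{\psi(p)a^m:m\ge0\}}.
\]
It remains to compute $\psi(p)$:
\[
\psi(p)=\exp\Bigl(-\tfrac{a|b|^2}{1-a}-\tfrac{|b|^2}{2}\Bigr)=\exp\Bigl(\tfrac{|b|^2}{2}\,\tfrac{a+1}{a-1}\Bigr)=\beta .
\]

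\emph{Case $a=1$.} Here $W_{\psi,\varphi}=U_{-b}$, and the task is to show that its spectrum, which is a nonempty subset of $\mathbb T$, is all of $\mathbb T$. I will use the Weyl commutation relation $U_pU_q=e^{i\,\mathrm{Im}(\cdot)}U_{p+q}$, whose phase is bilinear in $(p,\overline q)$. Conjugating gives
\[
U_{-q}U_{-b}U_q=e^{2i\,\mathrm{Im}(\overline q b)}U_{-b}
\]
up to the sign convention in the exponent. Since $b\ne0$, the quantity $\mathrm{Im}(\overline q b)$ takes every real value as $q$ ranges over $\C$. Therefore $\sigma(U_{-b})$ is invariant under all rotations, and a nonempty rotation-invariant subset of $\mathbb T$ is $\mathbb T$ itself.

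The main obstacle is getting the phase in the Weyl relation right, including its sign and the factor $2$. Only the fact that the phase is a nonconstant real-linear function of $q$ matters, and a direct computation of $U_{-q}U_{-b}U_q$ on kernels $K_w$ will settle it. Failing that, I can fall back on \cite[Corollary~1.4]{Zhao2014} transported by $D_{1/2}^{-1}$.
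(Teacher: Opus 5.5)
Your proof is correct, and it takes a genuinely different route from the paper. The paper gives no internal argument. It imports \cite[Corollary~1.4]{Zhao2014}, which is stated on $\mathcal F^2_{1/2}$, and transports it to $\F$ through the dilation $D_{1/2}^{-1}$.

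You instead work entirely with tools already in Section~2:
\begin{enumerate}
\item The factorization $W_{\psi,\varphi}=U_{-\overline a b}R_a$ gives unitarity at once.
\item The algebra in Lemma~\ref{lem:reduction} never uses $|a|<1$. The constant $e^{-|b|^2/2}$ in the weight simply carries over into $\psi(p)$. This gives $U_{-p}W_{\psi,\varphi}U_p=\psi(p)R_a$, with $v=0$ because $\overline a=1/a$.
\item For $a=1$, the phase you flagged as the main risk is exactly as you wrote it. From $U_pU_q=e^{-i\,\mathrm{Im}(p\overline q)}U_{p+q}$ one gets $U_{-q}U_{-b}U_q=e^{2i\,\mathrm{Im}(\overline q b)}U_{-b}$. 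So $\sigma(U_{-b})\subset\mathbb T$ is rotation invariant, hence equal to $\mathbb T$.
\end{enumerate}

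Your route has three advantages. It is self-contained. It verifies the constant $\beta=\psi(p)$ directly, rather than relying on a change of normalization between $\mathcal F^2_{1/2}$ and $\F$, where such constants are easily garbled. It also gives a structural explanation: for $a\ne1$ the operator is unitarily equivalent to $\beta R_a$. That argument does not use $b\ne0$, so it would also absorb the separately treated subcase $d=0$ in Proposition~\ref{pu}. The citation is shorter, but it leaves the reader to check the transfer.

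One cosmetic point: when you invoke Lemma~\ref{lem:reduction} ``with $\beta=0$'', that $\beta$ is the quadratic coefficient of the weight, not the spectral constant $\beta$ of the statement. Rename one of them.
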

\begin{proposition}\label{pu}
Fix $\alpha>0$, and let $W_{\Psi,\Phi}^{(\alpha)}$ be a bounded complex
symmetric weighted composition operator characterized in
Theorem~\ref{thm:weighted-affine}, with
\(\Phi(z)=az+d\). 
Whenever \(a\ne1\), set \(p=d/(1-a)\).
Then
\[
\sigma\bigl(W_{\Psi,\Phi}^{(\alpha)}\bigr)
=
\begin{cases}
\{0,\Psi(d)\},
& a=0,\\[1ex]

\Psi(0) e^{\alpha|d|^{2}/2}\mathbb T,
& a=1,\ d\neq0,\\[1ex]

\{\Psi(0)\},
& a=1,\ d=0,\\[1ex]

\overline{\{\Psi(p)a^n:n\ge0\}},
& |a|=1,\ a\neq1,\\[1ex]

\{0\}\cup\{\Psi(p)a^n:n\ge0\},
&0<|a|<1,
\end{cases}
\]
where $\mathbb T$ denotes the unit circle.
\end{proposition}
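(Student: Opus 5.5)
The plan is to first use the dilation $D_\alpha$ to reduce to $\alpha=1$. Since $D_\alpha W_{\Psi,\Phi}^{(\alpha)}D_\alpha^{-1}=W_{\widetilde\psi,\widetilde\varphi}$ with $\widetilde\psi(z)=\Psi(z/\sqrt\alpha)$ and $\widetilde\varphi(z)=az+\sqrt\alpha\,d$, the spectrum is unchanged. Every quantity in the statement transforms compatibly: $\widetilde\psi(\sqrt\alpha\,d)=\Psi(d)$, $\widetilde\psi(\sqrt\alpha\,p)=\Psi(p)$, $\widetilde\psi(0)=\Psi(0)$, and $|\sqrt\alpha\,d|^2/2=\alpha|d|^2/2$. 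So it suffices to prove the formula on $\F$, and I then treat the cases of Theorem~\ref{thm:weighted-affine} one at a time.

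\textbf{Case $a=0$.} By Lemma~\ref{rank1}, $W=\Psi\otimes K_d$ has rank one. Its nonzero spectrum is the single eigenvalue $\langle\Psi,K_d\rangle=\Psi(d)$, and $0$ lies in the spectrum because the space is infinite dimensional. This gives $\{0,\Psi(d)\}$; the set is $\{0\}$ if $\Psi(d)=0$, which the stated formula already covers.

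\textbf{Case $|a|=1$.} Here $\Psi(z)=\mu e^{-a\overline d z}$. If $a=1$ and $d=0$, then $W=\mu I$ and $\Psi(0)=\mu$. Otherwise I write $W=\mu e^{|d|^2/2}\,W_{k_{-\overline a d},\varphi}$, where $k_{-\overline a d}(z)=\exp(-a\overline d z-|d|^2/2)$, and apply Lemma~\ref{zhao}.
\begin{itemize}
\item For $a=1$ the spectrum is $\mu e^{|d|^2/2}\mathbb T$, and $\mu=\Psi(0)$.
\item For $a\ne1$ it is $\mu e^{|d|^2/2}\,\overline{\{\beta a^m\}}$ with $\beta=k_{-\overline a d}(p)$. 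Then $\mu e^{|d|^2/2}\beta=\Psi(p)$, which gives $\overline{\{\Psi(p)a^n\}}$.
\end{itemize}
If $d=0$ and $a\neq1$, then $W=\mu R_a$, whose spectrum is $\mu\,\overline{\{a^n\}}$. This is again $\overline{\{\Psi(p)a^n\}}$ since $p=0$, so the formula holds without appealing to Lemma~\ref{zhao}.

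\textbf{Case $0<|a|<1$.} Lemma~\ref{lem:reduction} gives $W_{\psi_2,\varphi_2}=\psi_2(p)\,VW_{\psi,\varphi}V^*$ with $V$ unitary. Corollary~\ref{spectrum} then yields the spectrum $\psi_2(p)\bigl(\{0\}\cup\{a^n\}\bigr)$. Restoring the factor $e^\delta$ turns $e^\delta\psi_2(p)$ into $\Psi(p)$.

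The one point needing care is that Corollary~\ref{spectrum} was proved under the standing assumption $c\in[0,1/2)$, which holds here by Lemma~\ref{lem:reduced-c-range}. This corollary is where the real work (power compactness in the noncompact case) was done. Beyond that, I expect no obstacle; the main burden is the bookkeeping of the scalar factors $\mu e^{|d|^2/2}$ and $e^\delta$, and checking that they match $\Psi(0)$, $\Psi(p)$, and $\Psi(d)$ as claimed.
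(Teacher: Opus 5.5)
Your proposal is correct and follows the paper's proof essentially step for step. You reduce to $\alpha=1$ by dilation, handle the rank-one case, and split the unimodular case into $\mu I$, the diagonal operator $\mu R_a$, and Lemma~\ref{zhao} after normalizing the weight to $k_{-\overline a d}$. You finish with Lemma~\ref{lem:reduction} and Corollary~\ref{spectrum}.

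You add two small things, both harmless. You read off $\Psi(d)=\langle\Psi,K_d\rangle$ directly instead of via $W^2=\Psi(d)W$. You also check explicitly, via Lemma~\ref{lem:reduced-c-range}, that the standing hypothesis $c\in[0,1/2)$ of Corollary~\ref{spectrum} holds.
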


\begin{proof}
By \eqref{eq:weighted-affine-dilation},
$\sigma\bigl(W_{\Psi,\Phi}^{(\alpha)}\bigr)
  =\sigma\bigl(W_{\widetilde\psi,\widetilde\varphi}\bigr).$

We consider the cases in Theorem~\ref{thm:weighted-affine} separately.

\begin{enumerate}[
  label=\textup{(\roman*)},
  leftmargin=*,
  align=left,
  itemsep=0.3em,
  topsep=0.3em,
  parsep=0pt
]
\item
Suppose that $a=0$. Then
\[
  W_{\Psi,\Phi}^{(\alpha)}
  =\Psi\otimes K_d^{(\alpha)},
  \qquad
  \bigl(W_{\Psi,\Phi}^{(\alpha)}\bigr)^2
  =\Psi(d)W_{\Psi,\Phi}^{(\alpha)}.
\]
Therefore,
$\sigma(W_{\Psi,\Phi}^{(\alpha)})
\subset\{0,\Psi(d)\}$.

The orthogonal complement of \(K_d^{(\alpha)}\) is contained in the
kernel of $W_{\Psi,\Phi}^{(\alpha)}$, so
$0\in\sigma(W_{\Psi,\Phi}^{(\alpha)})$.
If $\Psi(d)\neq0$, then
$W_{\Psi,\Phi}^{(\alpha)}\Psi=\Psi(d)\Psi$, so
$\Psi(d)\in\sigma(W_{\Psi,\Phi}^{(\alpha)})$.
If $\Psi(d)=0$, the displayed set is simply \(\{0\}\).
Consequently,
\[
  \sigma\bigl(W_{\Psi,\Phi}^{(\alpha)}\bigr)
  =\{0,\Psi(d)\}.
\]

\item
Suppose that $|a|=1$.
If $a=1$ and $d=0$, \eqref{az} and \eqref{mu} imply
$W_{\widetilde\psi,\widetilde\varphi}=\mu I$ and $\mu=\Psi(0),$ so 
\(\sigma(W_{\widetilde\psi,\widetilde\varphi})=\{\Psi(0)\}\).

If $a\ne1$ and $d=0$, then
$W_{\widetilde\psi,\widetilde\varphi}$ is diagonal with respect to the standard orthonormal
basis $e_n(z)=z^n/\sqrt{n!}$, since
$W_{\widetilde\psi,\widetilde\varphi}e_n=\Psi(0) a^n e_n$. Therefore,
\[
  \sigma(W_{\widetilde\psi,\widetilde\varphi})
  =\overline{\{\Psi(0)a^n:n\geq0\}}.
\]

Assume that $d\neq0$. By \eqref{mu},
$\widetilde\psi(z)=\mu e^{-a\sqrt\alpha\bar d z}$.
Define
\[
  \tilde{\psi_1}(z)
  =
  \exp\left(
    -a\sqrt\alpha\bar d z-\frac{\alpha|d|^2}{2}
  \right)=k_{-\bar{a}\sqrt\alpha d}(z).
\]
Then
$W_{\widetilde\psi,\widetilde\varphi}
=\mu e^{\alpha|d|^2/2}
 W_{\widetilde\psi_1,\widetilde\varphi}$.
Using Lemma \ref{zhao}, we have
\[
\sigma(W_{\widetilde\psi_1,\widetilde\varphi})
=
\begin{cases}
\mathbb T,
& a=1,d\neq 0,\\[1ex]
\overline{\{\omega a^n:n\ge0\}},
& a\neq1, d\neq 0,
\end{cases}
\]
where
$\omega
  =
  \exp\{
    \frac{\alpha|d|^2}{2}\frac{a+1}{a-1}\}.$
When $a\neq1$, we have
$
  \mu e^{\alpha|d|^2/2}\omega
  =
  \mu\exp\left(\frac{a\alpha|d|^2}{a-1}\right)
  =
  \Psi(p).
$
Therefore,
\[
  \sigma(W_{\widetilde\psi,\widetilde\varphi})
  =
  \begin{cases}
    \mu e^{\alpha|d|^{2}/2}\mathbb T,
      &a=1,\ d\neq0,\\[1ex]
    \{\Psi(0)\},
      &a=1,\ d=0,\\[1ex]
    \overline{\{\Psi(p)a^n:n\ge0\}},
      &|a|=1,\ a\neq1.
  \end{cases}
\]
\item
Suppose that $0<|a|<1$, and \eqref{eq:weighted-affine-reduction} gives
$
  W_{\widetilde\psi,\widetilde\varphi}
  =\Psi(p)\,V W_{\psi_0,\varphi_0}V^*.
$
By Corollary~\ref{spectrum}, we have
$\sigma(W_{\psi_0,\varphi_0})
  =\{0\}\cup\{a^n:n\ge0\}.$
Since \(\Psi(p)\ne0\), scalar multiplication and unitary equivalence give
\[
  \sigma\bigl(W_{\widetilde\psi,\widetilde\varphi}\bigr)
  =\{0\}\cup\{\Psi(p)a^n:n\ge0\}.
\]
\end{enumerate}
\end{proof}

\end{document}